\newcommand{\mylabel}[2]{#2\def\@currentlabel{#2}\label{#1}}
\newtheorem{lemma}{Lemma}
\newtheorem{theorem}{Theorem}
\newtheorem{remark}{Remark}
\numberwithin{equation}{section}
\title{SI-method for solving stiff nonlinear boundary value problems}
\author{Volodymyr Makarov }
\author{Denys Dragunov }
\thanks{Address : Insitute of Mathematics, National Academy of Sciences of Ukraine, 01024 Ukraine, Kiev-4,
3, Tereschenkivska st.}
\thanks{E-mail: makarov@imath.kiev.ua (Volodymyr Makarov); dragunovdenis@imath.kiev.ua (Denys Dragunov, corresponding author)}
\subjclass[2010]{65L04, 65L05, 65L10, 65L20, 65L50, 65Y15}
\keywords{Ordinary differential equation; SI-method; two point boundary value problem; stiff problems; singularly perturbed problems, the Troesch's problem}
\begin{document}
\maketitle

\begin{abstract}
The paper contains a thorough theoretical analysis of the SI-method, which was firstly introduced in \cite{Makarov_Dragunov_2019} and proved to be remarkably stable and efficient when applied to some instances of stiff boundary value problems (like the Troesch's problem). By suggesting a more general view on the SI-method's idea and framework, we managed to obtain sufficient conditions for the method to be applicable to a certain class of two-point boundary value problems. The corresponding error estimates are provided. Special attention is devoted to the exploration of the method's capabilities via a set of numerical examples. The implementation details of the method are discussed in fair depth. An open-source C++ implementation of the SI-method is freely available at the public repository \url{https://github.com/imathsoft/MathSoftDevelopment}.
\end{abstract}

\Large
\section{Introduction}
The aim of the present paper is to provide a thorough theoretical justification of the SI-method proposed in \cite{Makarov_Dragunov_2019}. In what follows, we give a slightly broader view on the SI-method, as compared to that from \cite{Makarov_Dragunov_2019}, and obtain sufficient conditions ensuring the method's applicability to a certain class of two-point boundary value problems.

The paper is focused on a boundary value problem (BVP) of the form
\begin{equation}\label{Intro_Equation}
  u^{\prime\prime}(x) = \mathcal{N}(u(x), x),
\end{equation}
\begin{equation}\label{Intro_boundary_conditions}
  u(a) = 0, \; u(b) = u_{b}, \; a<b,\;  0 < u_{b} \in \mathbb{R},
\end{equation}
which finds a number of applications in physics and is the object of a great many studies in numerical analysis, see, for example, \cite{Sweidan_Mohyeedden_Chen_Xiaojun_Zheng_Xiaoming_2P_BVP_2020}, \cite{Bhal_Santosh_Kumar_Danumjaya_Fairweather_2P_BVP_2020}, \cite{Mohanty_Manchanda_Geetan_Khan_Arshad_Khurana_Gunjan_2P_BVP_2020}, \cite{Ma_Guanglong_Stynes_Martin_2P_BVP_2020}, \cite{Ghorbani_Asghar_Passandideh_Hadi_2P_BVP_2020}, \cite{Kiguradze_2P_BVP_2019}, \cite{Justine_Hynichearry_Chew_Jackel_Vui_Ling_Sulaiman_Jumat_2P_BVP_2017} and the references therein.
Additionally, we assume
\begin{equation}\label{Intro_Nonlin_cond}
  \mathcal{N}(u, x) \equiv N(u, x)u,\; N(u, x) \in C^{1}(\mathbb{R}\times [a,b]),\; \mathcal{N}^{\prime}_{u}(u, x) \geq 0,\; \forall x\in [a,b],\; \forall u \in \mathbb{R},
\end{equation}
which guarantees existence and uniqueness of the solution to BVP \eqref{Intro_Equation}, \eqref{Intro_boundary_conditions} (see, \cite[p. 331, Theorem 7.26]{Kelley_Peterson_2010}).

As it was pointed out in \cite{Lee_June_Yub_and_Greengard_Leslie_1997_SIAM}, problems of type \eqref{Intro_Equation}, \eqref{Intro_boundary_conditions} can exhibit many different phenomena, including boundary layers, dense oscillations, and complicated or ill-conditioned internal transition regions. Any of the mentioned "complications" results in the solution process being rather expensive and unstable, which, in turn, characterizes the corresponding problem as being {\it stiff.} There were several different attempts to define the {\it stiffness} as such and those are fairly well summarized in \cite{Fifty_years_of_stiffness}.

Being not uncommon in the physics realm (see \cite{Hairer_Wanner_Stiff}), stiff BVPs has received a great deal of attention from the side of computational mathematics for the last (at least) five decades. To some extent, the essential part of almost all the numerical methods for solving stiff BVPs consists in the "construction of a mesh on which all features of the solution are locally smooth" (see \cite{Lee_June_Yub_and_Greengard_Leslie_1997_SIAM}). The latter can be achieved, for example, (i) by introducing a {\it monitor function} and building the mesh in such a way that the function is "equidistributed" on it (see \cite{Wright_Cash_Moore_1994}, \cite{Lee_June_Yub_and_Greengard_Leslie_1997_SIAM}); (ii) by applying a smooth transformation of the independent variable such that in this transformed coordinate, a number of derivatives of the solution are bounded (see \cite{Kreiss_Nichols_Brown}); (iii) by introducing a smooth transformation of the unknown solution so that the transformed problem can be solved on a more-less uniform mesh (see \cite{Chang20103043}, \cite{Chang20103303}, \cite{Gen_sol_of_TP}). The SI-method is not an exception and, in a way, its crucial part is also concerned with building a "proper" mesh, though, it does this in a rather specific manner.

In the present paper we are primarily interested in the cases when problem \eqref{Intro_Equation}, \eqref{Intro_boundary_conditions} is stiff, in the particular sense that its solution $u(x)$ possesses narrow intervals of rapid variation, known as the {\it boundary layers}. In \cite{Makarov_Dragunov_2019} the general idea of the SI ("straight-inverse") method was suggested for tackling problems of this kind. The approach is based on a simple observation that inside the boundary layers, where $|u^{\prime}(x)| \gg 1,$ the inverse function $x(\cdot) = u^{-1}(\cdot)$ (which, obviously, exists) is close to a constant. The latter means that switching to the problem with respect to the inverse function $x(u),$ whenever the straight function changes rapidly, is beneficial from the computational point of view.
Getting back to the "mesh construction" discussion, we can say that by switching between the problems for "straight" $u(x)$ and "inverse" $x(u)$ unknown functions we can keep our meshes almost uniform (each in its own dimension: $"x"$ or $"u"$).

Despite all the generality, simplicity and efficiency of the SI-method demonstrated in \cite{Makarov_Dragunov_2019}, the  latter work has not provided the necessary theoretical justification of the method in order to answer questions about its range of applicability and approximation properties. Here we aim to start filling this theoretical gap, admitting, however, that to cover the subject in depth definitely requires more than one publication.

The paper is organized as follows. In Section \ref{section_straight_inverse_hybrid} we explain the essence of the SI-method's idea through the concepts of "straight", "inverse" and "hybrid" problems. We show that the "hybrid" problem is more accessible from the computational point of view and has a unique solution which partly coincide with that of the original ("straight") problem. In Section \ref{section_SI_method_and_its_numerical_proeprties} we introduce a numerical scheme for solving the "hybrid" problem and investigate its properties. Section \ref{section_error_analysis} is devoted to the error analysis of the mentioned numerical scheme. Using the results from \cite{Vidossich_Giovanni_2001_differentiability} we prove Theorem \ref{Main_theorem_abour_apptoximation_prorerties_of_the_SI_method} about approximation properties of the SI-method applied to BVP \eqref{Intro_Equation}, \eqref{Intro_boundary_conditions}, which, in effect, specifies the statement of Proposition 2 formulated in \cite{Makarov_Dragunov_2019} without a proof. Implementation aspects of the SI-method are discussed in Section \ref{section_implementation_aspects} and the numerical examples are presented in Section \ref{section_numerical_examples}. Section \ref{Section_conclusions} contains our conclusions.

\section{The "straight", "inverse" and "hybrid" problems.}\label{section_straight_inverse_hybrid}

In what follows we assume that together with condition \eqref{Intro_Nonlin_cond}, which provides the existence of the solution, the nonlinearity in the right hand side of equation \eqref{Intro_Equation} satisfies the inequality
   \begin{equation}\label{lema_1_positiveness_of_nonlinearity}
     N(u, x)\geq 0,\; \forall u\in [0, +\infty),\; x\in[a,b],
   \end{equation}
which makes the solution's behaviour more predictable, as it is stated by the lemma below.

\begin{lemma}\label{lemma_monotonicity_of_u}
   Let conditions \eqref{Intro_Nonlin_cond} and \eqref{lema_1_positiveness_of_nonlinearity} hold true.
   Then the solution $u(x)$ to BVP \eqref{Intro_Equation}, \eqref{Intro_boundary_conditions} is  monotonically increasing and convex on $[a, b].$
\end{lemma}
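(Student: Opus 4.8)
The plan is to establish the two qualitative properties in the order nonnegativity of $u$, then convexity, then monotonicity, each step feeding into the next. The starting observation is that the nonlinearity is sign-compatible with its argument. Since $\mathcal{N}(0,x) = N(0,x)\cdot 0 = 0$ and, by \eqref{Intro_Nonlin_cond}, $\mathcal{N}(\cdot, x)$ is nondecreasing, we obtain $\mathcal{N}(u,x) \le 0$ whenever $u \le 0$ and $\mathcal{N}(u,x) \ge 0$ whenever $u \ge 0$ (the latter being also immediate from \eqref{lema_1_positiveness_of_nonlinearity}). This lets me read off the sign of $u''(x) = \mathcal{N}(u(x),x)$ from the sign of $u(x)$ on any subinterval.

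First I would prove that $u(x) \ge 0$ on $[a,b]$, arguing by contradiction. Suppose the open set $\{x \in [a,b] : u(x) < 0\}$ is nonempty, and let $(\alpha, \beta)$ be the connected component containing a point where $u$ is negative. Since $u(a) = 0$ and $u(b) = u_{b} > 0$, continuity and maximality force $u(\alpha) = u(\beta) = 0$ with $a \le \alpha < \beta < b$. On $(\alpha, \beta)$ we have $u < 0$, hence $u'' = \mathcal{N}(u,x) \le 0$, so $u$ is concave on $[\alpha,\beta]$. A concave function lies above the chord joining its endpoint values, which here is identically zero; thus $u \ge 0$ on $[\alpha,\beta]$, contradicting $u < 0$ on the interior. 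Therefore $u \ge 0$ throughout $[a,b]$.

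The convexity is then immediate: with $u(x) \ge 0$ established, condition \eqref{lema_1_positiveness_of_nonlinearity} (or equally the monotonicity of $\mathcal{N}$) gives $u''(x) = \mathcal{N}(u(x),x) \ge 0$ on $[a,b]$, so $u'$ is nondecreasing. For monotonicity I would note that, because $u(a) = 0$ and $u \ge 0$, the one-sided derivative satisfies $u'(a) = \lim_{x \to a^{+}} u(x)/(x-a) \ge 0$; combined with $u'$ nondecreasing this yields $u'(x) = u'(a) + \int_{a}^{x} u''(t)\,dt \ge 0$ for every $x \in [a,b]$, i.e. $u$ is monotonically increasing.

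The only genuinely delicate step is the nonnegativity argument. Its subtlety is that \eqref{lema_1_positiveness_of_nonlinearity} says nothing about the sign of $N(u,x)$ for $u<0$, so the concavity on the "negative" component cannot be obtained from \eqref{lema_1_positiveness_of_nonlinearity} alone; I would lean on $\mathcal{N}'_{u} \ge 0$ from \eqref{Intro_Nonlin_cond} together with $\mathcal{N}(0,x) = 0$ to pin down $\mathcal{N}(u,x) \le 0$ there. Care is also needed in identifying the endpoints of the component and in verifying $\beta < b$ (which uses $u_{b} > 0$), but these are routine bookkeeping once the contradiction scheme is in place.
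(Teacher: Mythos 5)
Your sign analysis of $\mathcal{N}$, the nonnegativity argument, and the convexity step are all correct, and the concavity-on-negative-components device is a nice elementary substitute for the paper's route (the paper instead compares $u$ with the trivial solution $w\equiv 0$ on a subinterval where both vanish at the endpoints, citing the maximum principle of Protter--Weinberger). However, your proof stops short of what the lemma asserts and of what the paper needs immediately afterwards, when it inverts $u$ to pass to the ``inverse'' problem: you only obtain $u^{\prime}(x)\geq 0$, i.e.\ that $u$ is nondecreasing, not that it is increasing. Your one-sided derivative estimate gives exactly $u^{\prime}(a)\geq 0$, and nothing in your argument excludes $u^{\prime}(a)=0$; a convex nondecreasing function may be identically zero on an initial subinterval $[a,d]$ and lift off afterwards, and such a profile is perfectly compatible with all the sign and convexity information you have collected. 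Since the purpose of the lemma in this paper is to guarantee that $u$ is invertible on $[a,b]$, weak monotonicity is a genuinely weaker conclusion.

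The missing idea is precisely the paper's opening step: $u^{\prime}(a)\neq 0$. Because $\mathcal{N}(u,x)=N(u,x)u$ vanishes identically at $u=0$, the zero function solves equation \eqref{Intro_Equation}; if $u^{\prime}(a)=0$, then $u$ and $w\equiv 0$ satisfy the same initial conditions at $x=a$, and the Picard--Lindel\"of theorem (applicable because \eqref{Intro_Nonlin_cond} makes $\mathcal{N}$ locally Lipschitz in $u$) forces $u\equiv 0$ on $[a,b]$, contradicting $u(b)=u_{b}>0$. Hence $u^{\prime}(a)>0$, and combined with your $u^{\prime\prime}\geq 0$ this yields $u^{\prime}(x)\geq u^{\prime}(a)>0$ on all of $[a,b]$, i.e.\ strict monotonic increase and invertibility. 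This uniqueness step is cheap and plugs directly into your scheme in place of the limit computation of $u^{\prime}(a)$, but it cannot be dispensed with: no amount of sign analysis of $\mathcal{N}$ alone rules out solutions that are flat at $a$, since ruling them out is exactly a uniqueness statement for the initial value problem.
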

\begin{proof}
First of all, let us point out that
\begin{equation}\label{lemma_proof_derivative_is_not_0_at_point_a}
  u^{\prime}(a) \neq 0.
\end{equation}
Otherwise, according to the the Pickard-Lindelof Theorem (see, for example, \cite[p.350]{Kelley_Peterson_2010}), whose conditions are fulfilled, $u(x)$ must totally coincide with $0$ on $[a,b],$ which contradicts the condition $u(b) = u_{b} > 0$ (see \eqref{Intro_boundary_conditions}).

Second, let us prove that $u(x) > 0,$ $\forall x\in (a,b].$ Assume that the latter is not true and there exists at least one point $x_{1}\in(a,b)$ such that $u(x_{1}) \leq 0.$ This immediately implies the existence of point $b_{1} \in [x_{1},b),$  such that
\begin{equation}\label{lemma_proof_assumption}
  u(a) = u(b_{1}) = 0,\; x_{1}\in (a, b_{1}].
\end{equation}

Obviously, function $w(x)\equiv 0$ satisfies equation \eqref{Intro_Equation} which, in conjunction with condition \eqref{Intro_Nonlin_cond}, allows us to apply the result of Theorem 21 from \cite[p. 48]{Protter_Weinberger_Max_principle} (the maximum principle) and prove that neither $u(x)$ nor $-u(x)$ can achieve positive maximum on $[a, b_{1}]$ and, hence, $u(x) = 0,$ $\forall x\in [a, b_{1}].$ The latter means that $u^{\prime}(0) = 0,$ which contradicts to \eqref{lemma_proof_derivative_is_not_0_at_point_a}!?

The fact that $u(x)$ is positive on $(a,b]$ together with condition \eqref{lema_1_positiveness_of_nonlinearity} means that
\begin{equation}\label{lemma_proof_second_deriv_is_nonnegative}
  u^{\prime\prime}(x) \geq 0, \; \forall x\in [a,b].
\end{equation}

On the other hand, in the light of \eqref{lemma_proof_derivative_is_not_0_at_point_a}, the positiveness of $u(x)$ on $(a,b]$ immediately yields us
\begin{equation}\label{lemma_proof_positiveness_of_derivative_at_point_a}
  u^{\prime}(a) > 0.
\end{equation}

Combining \eqref{lemma_proof_second_deriv_is_nonnegative} and \eqref{lemma_proof_positiveness_of_derivative_at_point_a} we get the statement of the Lemma.
\end{proof}

From Lemma \ref{lemma_monotonicity_of_u} it follows that, under conditions \eqref{Intro_Nonlin_cond} and \eqref{lema_1_positiveness_of_nonlinearity}, the solution $u(x)$ of BVP \eqref{Intro_Equation}, \eqref{Intro_boundary_conditions} can have at most one boundary layer, which (if exists) must be near the point $x=b.$
 The lemma also guarantees that the solution is invertible on $[a, b].$ It is not difficult to verify that the "inverse" function $u^{-1}(\cdot) = x(\cdot)$ must be a solution to BVP
\begin{equation}\label{Intro_Equation_Inverse}
   x^{\prime\prime}(u) = -\mathcal{N}(u, x(u))\left(x^{\prime}(u)\right)^{3},\; u\in [0, u_{b}],
\end{equation}
\begin{equation}\label{Intro_boundary_conditions_Inverse}
  x(0) = a, \; x(u_{b}) = b.
\end{equation}

A few statements below give us some insight on the properties of the "inverse" problem \eqref{Intro_Equation_Inverse}, \eqref{Intro_boundary_conditions_Inverse}.

\begin{lemma}\label{lemma_represent_of_solution_inv_problem}
Let $\mathcal{N}(u,x)\in C^{1}([0, u_{b}]\times \mathbb{R})$ and function $x_{\ast}(u)\in C^{2}([0, u_{b}])$ be a solution to equation \eqref{Intro_Equation_Inverse}, then
\begin{equation}\label{Bernoully_representation_of_the_inverse_solution}
  x_{\ast}(u) = x_{\ast}(0) + \int\limits_{0}^{u}\frac{x_{\ast}^{\prime}(0)d\eta}{\sqrt{1+2(x_{\ast}^{\prime}(0))^{2}\int\limits_{0}^{\eta}\mathcal{N}(\xi, x_{\ast}(\xi))d\xi}}
\end{equation}
\end{lemma}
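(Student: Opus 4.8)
The plan is to reduce the second-order ODE \eqref{Intro_Equation_Inverse} to a first-order equation for the derivative $x_\ast'(u)$, solve that explicitly, and then integrate once more. The key structural observation is that the right-hand side of \eqref{Intro_Equation_Inverse} contains $\left(x_\ast'(u)\right)^3$ as a factor, so dividing through by $\left(x_\ast'(u)\right)^3$ isolates a perfect derivative.

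Concretely, I would set $p(u) = x_\ast'(u)$ and rewrite \eqref{Intro_Equation_Inverse} as $p'(u)/p(u)^3 = -\mathcal{N}(u, x_\ast(u))$. The left-hand side is exactly $-\tfrac12 \dfrac{d}{du}\!\left(p(u)^{-2}\right)$, so integrating from $0$ to $\eta$ gives
\begin{equation}\label{plan_first_integral}
  \frac{1}{p(\eta)^2} - \frac{1}{p(0)^2} = 2\int\limits_0^\eta \mathcal{N}(\xi, x_\ast(\xi))\,d\xi.
\end{equation}
Solving \eqref{plan_first_integral} for $p(\eta)$ and choosing the correct sign of the square root (the sign consistent with $p(0) = x_\ast'(0)$, since $p$ cannot vanish for a $C^2$ solution of an equation whose right-hand side is proportional to $p^3$) yields
\begin{equation}\label{plan_p_formula}
  x_\ast'(\eta) = \frac{x_\ast'(0)}{\sqrt{1 + 2\left(x_\ast'(0)\right)^2\int\limits_0^\eta \mathcal{N}(\xi, x_\ast(\xi))\,d\xi}}.
\end{equation}
Integrating \eqref{plan_p_formula} from $0$ to $u$ and adding $x_\ast(0)$ then produces exactly \eqref{Bernoully_representation_of_the_inverse_solution}.

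The main subtlety — and the step I would treat most carefully — is the justification that $x_\ast'(u)$ never vanishes on $[0, u_b]$, which is what legitimizes dividing by $p^3$ and keeps the integrand in \eqref{plan_first_integral} finite. If $x_\ast'(u_0) = 0$ at some interior point, then by \eqref{Intro_Equation_Inverse} we also have $x_\ast''(u_0) = 0$; iterating this with the $C^1$ regularity of $\mathcal{N}$ (via the Picard–Lindelöf uniqueness theorem applied to the first-order system for $(x_\ast, p)$) forces $p \equiv 0$, contradicting that $x_\ast$ is a genuine (non-constant) solution. I would also note that the sign of the denominator in \eqref{plan_p_formula} is unambiguous: the quantity under the root equals $p(0)^2/p(\eta)^2 > 0$ by \eqref{plan_first_integral}, so the square root is real and the representation is well defined. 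The remaining manipulations are routine integration, so no further obstacle arises.
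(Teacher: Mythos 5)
Your route is genuinely different from the paper's. The paper never divides by $x_\ast'$: it freezes the coefficient along the solution, considering the auxiliary IVP $x'' = -\mathcal{N}(u, x_\ast(u))(x')^3$, $x(0)=x_\ast(0)$, $x'(0)=x_\ast'(0)$, observes that both $x_\ast$ and the right-hand side of \eqref{Bernoully_representation_of_the_inverse_solution} solve this IVP (citing a handbook for the latter, since this is a Bernoulli equation with known coefficient), and concludes by Picard--Lindel\"{o}f uniqueness. You instead derive the representation forward, recognizing $x_\ast''/(x_\ast')^3$ as $-\tfrac12\frac{d}{du}\left((x_\ast')^{-2}\right)$ and integrating twice. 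What the paper's approach buys is that the degenerate case of a vanishing derivative costs nothing: uniqueness applies regardless, and when $x_\ast'(0)=0$ the candidate formula is simply the constant $x_\ast(0)$. What your approach buys is self-containedness: you never need to quote the solved form of the Bernoulli equation, because you derive it, and your observation that the radicand equals $(x_\ast'(0))^2/(x_\ast'(\eta))^2>0$ cleanly settles well-definedness of the formula.

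There is, however, one flaw, located exactly at the step you flagged as the main subtlety. Your dichotomy argument is right up to its last clause: if $x_\ast'(u_0)=0$ at some point, then uniqueness for the system $x'=p$, $p'=-\mathcal{N}(u,x)p^3$ (whose right-hand side is locally Lipschitz in $(x,p)$ since $\mathcal{N}\in C^1$) forces $x_\ast$ to coincide with the constant solution $x\equiv x_\ast(u_0)$ on all of $[0,u_b]$. But this is not a contradiction: the lemma does not assume $x_\ast$ is non-constant, and constant functions are perfectly valid $C^2$ solutions of \eqref{Intro_Equation_Inverse}, so you cannot discharge this case by fiat. The repair costs one sentence: if $x_\ast$ is constant, then $x_\ast'(0)=0$ and both sides of \eqref{Bernoully_representation_of_the_inverse_solution} equal $x_\ast(0)$, so the representation holds trivially; otherwise $x_\ast'$ never vanishes and your computation applies verbatim. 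Note that covering the constant case is not pedantry here: the paper's Lemma \ref{lemma_monotone_invrse_solution} applies the representation precisely to a solution whose derivative might vanish somewhere, in order to conclude that such a solution must be constant (and then rules constants out using $a\neq b$); a version of the present lemma that presupposes non-constancy would not support that downstream argument.
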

\begin{proof}
 Let us consider an auxiliary initial value problem
 \begin{equation}\label{lemma_represent_aux_problem}
 x^{\prime\prime}(u) = - \mathcal{N}(u, x_{\ast}(u))(x^{\prime})^{3}, \; x(0) = x_{\ast}(0),\; x^{\prime}(0) = x^{\prime}_{\ast}(0).
 \end{equation}
 From the assumptions of the lemma it follows that function $x_{\ast}(u)$ is a solution to problem \eqref{lemma_represent_aux_problem}.

 On the other hand, it is easy to see that the function in the right hand side of equality \eqref{Bernoully_representation_of_the_inverse_solution} is two times continuously differentiable in some vicinity of point $u=0$ and also satisfies problem \eqref{lemma_represent_aux_problem} (see \cite[\textit{0.1.2-6. Bernoulli equation}]{zaitsev2002handbook}).
 The {\it Picard-Lindelof Theorem} (see, for example, \cite[p. 350]{Kelley_Peterson_2010}), whose conditions are fulfilled for the case of problem \eqref{lemma_represent_aux_problem}, states that $x_{\ast}(u)$ is the unique (!) solution to IVP \eqref{lemma_represent_aux_problem}. The latter immediately yields us identity \eqref{Bernoully_representation_of_the_inverse_solution} and concludes the proof.
\end{proof}

\begin{lemma}\label{lemma_monotone_invrse_solution}
Let conditions of Lemma \ref{lemma_represent_of_solution_inv_problem} hold true and function $x_{\ast}(u)\in C^{2}([0, u_{b}])$ be a solution to BVP \eqref{Intro_Equation_Inverse}, \eqref{Intro_boundary_conditions_Inverse} with $a \neq b$, then
$$x_{\ast}^{\prime}(u) \neq 0,\;  \forall u\in[0, u_{b}].$$
\end{lemma}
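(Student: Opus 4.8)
The plan is to exploit the explicit representation \eqref{Bernoully_representation_of_the_inverse_solution} furnished by Lemma~\ref{lemma_represent_of_solution_inv_problem}. Differentiating it with respect to $u$ (via the fundamental theorem of calculus) yields
\begin{equation*}
  x_{\ast}^{\prime}(u) = \frac{x_{\ast}^{\prime}(0)}{\sqrt{g(u)}},\qquad g(u) := 1 + 2\bigl(x_{\ast}^{\prime}(0)\bigr)^{2}\int\limits_{0}^{u}\mathcal{N}(\xi, x_{\ast}(\xi))\,d\xi ,
\end{equation*}
so that the whole statement reduces to two claims: that the numerator $x_{\ast}^{\prime}(0)$ is nonzero, and that the denominator $\sqrt{g(u)}$ is real, finite and nonzero for every $u\in[0,u_{b}]$. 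Once both are established, $x_{\ast}^{\prime}(u)$ keeps the constant, nonvanishing sign of $x_{\ast}^{\prime}(0)$, which is exactly the assertion.

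For the first claim I would argue by contradiction using the boundary data. If $x_{\ast}^{\prime}(0)=0$, then the integrand in \eqref{Bernoully_representation_of_the_inverse_solution} vanishes identically and the representation collapses to $x_{\ast}(u)\equiv x_{\ast}(0)=a$ on $[0,u_{b}]$. In particular $x_{\ast}(u_{b})=a$, which contradicts the boundary condition $x_{\ast}(u_{b})=b$ from \eqref{Intro_boundary_conditions_Inverse} together with the hypothesis $a\neq b$. Hence $x_{\ast}^{\prime}(0)\neq 0$.

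For the second claim, I would observe that $g$ is continuous on $[0,u_{b}]$ with $g(0)=1>0$. The essential point is that, since $x_{\ast}\in C^{2}([0,u_{b}])$, its derivative $x_{\ast}^{\prime}(u)$ is finite at every point of the closed interval; were $g$ to vanish at some $u_{0}$, the displayed formula would force $|x_{\ast}^{\prime}(u_{0})|=\infty$, a contradiction. Thus $g$ never vanishes, and being continuous with $g(0)>0$ it stays strictly positive throughout $[0,u_{b}]$ by the intermediate value theorem. Consequently $\sqrt{g(u)}$ is real, finite and positive, and combining this with $x_{\ast}^{\prime}(0)\neq 0$ gives $x_{\ast}^{\prime}(u)\neq 0$ for all $u\in[0,u_{b}]$.

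The main obstacle, and the place where the regularity assumption $x_{\ast}\in C^{2}([0,u_{b}])$ is genuinely used, is precisely the positivity of $g$: the conditions of Lemma~\ref{lemma_represent_of_solution_inv_problem} do not include a sign condition on $\mathcal{N}$, so the radicand need not be monotone and its strict positivity cannot be read off directly, but must instead be deduced from the finiteness of $x_{\ast}^{\prime}$. An alternative route that sidesteps the representation altogether is to set $v:=x_{\ast}^{\prime}$ and note that $v$ solves the first-order initial value problem $v^{\prime}=-\mathcal{N}(u,x_{\ast}(u))\,v^{3}$; since the right-hand side is locally Lipschitz in $v$, the Picard--Lindelof uniqueness theorem shows that $v(u_{0})=0$ at any single point would force $v\equiv 0$, i.e. $x_{\ast}\equiv a$, again contradicting $a\neq b$.
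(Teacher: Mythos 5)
Your proof is correct and follows essentially the same route as the paper's: both rest on the representation \eqref{Bernoully_representation_of_the_inverse_solution} of Lemma~\ref{lemma_represent_of_solution_inv_problem}, deducing that vanishing of $x_{\ast}^{\prime}$ at a single point forces $x_{\ast}^{\prime}(0)=0$, hence $x_{\ast}\equiv a$, contradicting $x_{\ast}(u_{b})=b\neq a$. The paper's version is just terser --- it leaves the finiteness and positivity of the radicand implicit --- so your explicit handling of the denominator (and the Picard--Lindel\"{o}f alternative applied to $v=x_{\ast}^{\prime}$ that you sketch at the end) merely fills in details the paper takes for granted.
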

\begin{proof}
From the representation \eqref{Bernoully_representation_of_the_inverse_solution} it follows that if $x^{\prime}_{\ast}(u) = 0$ at some point $u\in [0, u_{b}],$ then the same is true for every point of  interval $[0, u_{b}]$ and $x_{\ast}(u) = \textit{const}.$ The latter is impossible since $x_{\ast}(0) = a \neq b = x_{\ast}(u_{b}).$ The contradiction completes the proof.
\end{proof}

We see that under conditions \eqref{Intro_Nonlin_cond} and \eqref{lema_1_positiveness_of_nonlinearity} the solution to BVP \eqref{Intro_Equation}, \eqref{Intro_boundary_conditions} is unique, invertible and the inverse function is a solution to BVP \eqref{Intro_Equation_Inverse}, \eqref{Intro_boundary_conditions_Inverse}. The theorem below states that the opposite is also true.

\begin{theorem}\label{theorem_uniqueness_of_solution_to_inverse_problem}
  Let conditions \eqref{Intro_Nonlin_cond} and \eqref{lema_1_positiveness_of_nonlinearity} hold true, then BVP \eqref{Intro_Equation_Inverse}, \eqref{Intro_boundary_conditions_Inverse} has a unique monotone solution whose inverse is the solution to BVP \eqref{Intro_Equation}, \eqref{Intro_boundary_conditions}.
\end{theorem}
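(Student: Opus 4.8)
The plan is to obtain existence essentially for free from the "straight" side and then reduce the uniqueness of the monotone solution of the inverse problem to the already-known uniqueness for the straight problem. For existence, I would invoke Lemma \ref{lemma_monotonicity_of_u}, which furnishes the unique solution $u(x)$ of \eqref{Intro_Equation}, \eqref{Intro_boundary_conditions}: it is strictly increasing and convex, hence a $C^{2}$-diffeomorphism of $[a,b]$ onto $[0,u_{b}]$. A direct substitution (as already observed just before \eqref{Intro_Equation_Inverse}) shows that its inverse $x(\cdot)=u^{-1}(\cdot)$ is a $C^{2}$ solution of \eqref{Intro_Equation_Inverse}, \eqref{Intro_boundary_conditions_Inverse}, and it is monotone because $u$ is. Thus a monotone solution of the inverse problem exists, and the whole difficulty is concentrated in uniqueness.

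For uniqueness, I would start from an arbitrary monotone solution $x_{\ast}(u)\in C^{2}([0,u_{b}])$ of \eqref{Intro_Equation_Inverse}, \eqref{Intro_boundary_conditions_Inverse}. Since $a\neq b$, Lemma \ref{lemma_monotone_invrse_solution} guarantees $x_{\ast}^{\prime}(u)\neq 0$ on all of $[0,u_{b}]$; combining this with the boundary data $x_{\ast}(0)=a<b=x_{\ast}(u_{b})$ and the continuity of $x_{\ast}^{\prime}$ forces $x_{\ast}^{\prime}(u)>0$ throughout. Consequently $x_{\ast}$ is a strictly increasing $C^{2}$-diffeomorphism of $[0,u_{b}]$ onto $[a,b]$ whose inverse $u_{\ast}(x)$ is itself $C^{2}$ on $[a,b]$.

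Next I would verify that $u_{\ast}$ solves the straight problem. Differentiating the identity $x_{\ast}(u_{\ast}(x))=x$ twice via the inverse-function rule gives $u_{\ast}^{\prime}(x)=1/x_{\ast}^{\prime}(u_{\ast}(x))$ and $u_{\ast}^{\prime\prime}(x)=-x_{\ast}^{\prime\prime}(u_{\ast}(x))/\bigl(x_{\ast}^{\prime}(u_{\ast}(x))\bigr)^{3}$. Substituting the inverse equation \eqref{Intro_Equation_Inverse} in the form $x_{\ast}^{\prime\prime}=-\mathcal{N}(u,x_{\ast}(u))(x_{\ast}^{\prime})^{3}$ and using $x_{\ast}(u_{\ast}(x))=x$, the factor $(x_{\ast}^{\prime})^{3}$ cancels and one is left with $u_{\ast}^{\prime\prime}(x)=\mathcal{N}(u_{\ast}(x),x)$, i.e. equation \eqref{Intro_Equation}. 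The boundary conditions \eqref{Intro_boundary_conditions_Inverse} translate into $u_{\ast}(a)=0$ and $u_{\ast}(b)=u_{b}$, which are exactly \eqref{Intro_boundary_conditions}. Hence $u_{\ast}$ is a solution of the straight problem.

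Finally, condition \eqref{Intro_Nonlin_cond} renders the straight problem \eqref{Intro_Equation}, \eqref{Intro_boundary_conditions} uniquely solvable (the very fact recalled right after \eqref{Intro_Nonlin_cond} and used in Lemma \ref{lemma_monotonicity_of_u}), so $u_{\ast}\equiv u$ and therefore $x_{\ast}\equiv u^{-1}$, which is precisely the candidate produced in the first step. This simultaneously establishes the uniqueness of the monotone solution of the inverse problem and the claim that its inverse solves the straight problem. I expect the only genuinely delicate point to be the passage from "monotone" to "strictly increasing with nowhere-vanishing derivative": without the strict positivity of $x_{\ast}^{\prime}$ supplied by Lemma \ref{lemma_monotone_invrse_solution}, neither the $C^{2}$-regularity of $u_{\ast}$ nor the chain-rule computation above would be legitimate. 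Everything else reduces to routine inverse-function bookkeeping.
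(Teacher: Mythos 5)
Your argument is correct in substance and follows essentially the same route as the paper: existence by inverting the monotone solution of the straight problem, and uniqueness by applying Lemma \ref{lemma_monotone_invrse_solution} to an arbitrary solution of the inverse problem, inverting it, and appealing to the unique solvability of BVP \eqref{Intro_Equation}, \eqref{Intro_boundary_conditions}. The one step of the paper's proof you skip is the appeal to Whitney's extension theorem, and it is not purely cosmetic: Lemma \ref{lemma_monotone_invrse_solution} inherits the hypothesis of Lemma \ref{lemma_represent_of_solution_inv_problem}, namely $\mathcal{N}(u,x)\in C^{1}([0,u_{b}]\times\mathbb{R})$, whereas condition \eqref{Intro_Nonlin_cond} only provides $\mathcal{N}\in C^{1}(\mathbb{R}\times[a,b])$; these domains are incomparable (one is unbounded in $x$, the other in $u$), so under the theorem's hypotheses Lemma \ref{lemma_monotone_invrse_solution} does not apply verbatim. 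The paper resolves this by first extending $\mathcal{N}$ to a function in $C^{1}(\mathbb{R}^{2})$ and proving uniqueness for every such extension, which also settles what ``solution'' of \eqref{Intro_Equation_Inverse}, \eqref{Intro_boundary_conditions_Inverse} means when a candidate $x_{\ast}(u)$ might take values outside $[a,b]$. Your proof can be repaired in exactly that way, or alternatively by noting that the proofs of Lemmas \ref{lemma_represent_of_solution_inv_problem} and \ref{lemma_monotone_invrse_solution} only use $\mathcal{N}$ along the graph $(u,x_{\ast}(u))$, which necessarily lies in the original domain whenever $x_{\ast}$ is a solution in the strict sense; with that fixed, your inverse-function computation and the final appeal to uniqueness of the straight problem coincide with the paper's argument.
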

\begin{proof}
The existence of a solution to BVP \eqref{Intro_Equation_Inverse}, \eqref{Intro_boundary_conditions_Inverse} follows from the existence and monotonicity of the solution $u(x)$ to BVP \eqref{Intro_Equation}, \eqref{Intro_boundary_conditions}.
The {\it Whitney's extension theorem} (see \cite[Theorem I]{Whitney_extensions}) guarantees that function $\mathcal{N}(u,x)$ can be extended to a function from $C^{1}(\mathbb{R}^{2}).$ For each such an extension, the conditions of Lemma \ref{lemma_monotone_invrse_solution} hold true, which means that any solution $x(u)\in C^{2}([0, u_{b}])$ to BVP \eqref{Intro_Equation_Inverse}, \eqref{Intro_boundary_conditions_Inverse} with an extended $\mathcal{N}(u,x)$ is invertible and (as it can be easily verified) the two times continuously differentiable inverse function must satisfy problem \eqref{Intro_Equation}, \eqref{Intro_boundary_conditions}, coinciding with its unique solution $u(x)$. The latter yields us the uniqueness of $x(u)$.
\end{proof}

We see that there is a strong and unambiguous connection between the "straight" \eqref{Intro_Equation}, \eqref{Intro_boundary_conditions} and "inverse" \eqref{Intro_Equation_Inverse}, \eqref{Intro_boundary_conditions_Inverse}  problems. As it was mentioned above, if solution $u(x)$ has a boundary layer near the point $x=b,$ then solution $x(u)$ is close to a constant near the point $u=u_{b}.$ To utilize this remarkable property we need to consider a one-parameter family of "hybrid" problems defined as follows:
for the given value of parameter $c\in (a, b),$ find a pair of two times continuously differentiable functions $\mathfrak{u}(x)$ and $\mathfrak{x}(u)$ such that
\begin{equation}\label{c_family_problem_straight}
  \mathfrak{u}^{\prime\prime}(x) = \mathcal{N}(\mathfrak{u}(x), x), \; x\in [a, c],\; \mathfrak{u}(a) = 0,\; \mathfrak{u}(c) \neq u_{b},
\end{equation}
\begin{equation}\label{c_family_problem_inverse}
  \mathfrak{x}^{\prime\prime}(x) = -\mathcal{N}(u, \mathfrak{x}(u))\left(\mathfrak{x}^{\prime}(u)\right)^{3}, \; u\in [\mathfrak{u}(c), u_{b}],\; \mathfrak{x}(u_{b}) = b,
\end{equation}
\begin{equation}\label{c_family_problem_match_conditions}
\mathfrak{x}(\mathfrak{u}(c)) = c,\; \mathfrak{x}^{\prime}(\mathfrak{u}(c)) = \frac{1}{\mathfrak{u}^{\prime}(c)}.
\end{equation}

The following theorem reveals how the solution of "hybrid" problem \eqref{c_family_problem_straight}, \eqref{c_family_problem_inverse}, \eqref{c_family_problem_match_conditions} relates to the solutions of the "straight" and "inverse" problems.

\begin{theorem}\label{theorem_meaning_of_c_family_problem}
Let conditions \eqref{Intro_Nonlin_cond} and \eqref{lema_1_positiveness_of_nonlinearity} hold true, then for any $c\in(a,b)$ there exists a unique pair of functions $\mathfrak{u}(x)$ and $\mathfrak{x}(u)$ satisfying conditions \eqref{c_family_problem_straight}, \eqref{c_family_problem_inverse}, \eqref{c_family_problem_match_conditions}. Furthermore, the following identities hold true
$$\mathfrak{u}(x) = u(x), \; \forall x\in [a, c],$$
$$\mathfrak{x}(u) = x(u),\; \forall u\in [\mathfrak{u}(c), u_{b}],$$
where $u(x)$ and $x(u)$ are the solutions to BVPs \eqref{Intro_Equation}, \eqref{Intro_boundary_conditions} and  \eqref{Intro_Equation_Inverse}, \eqref{Intro_boundary_conditions_Inverse} respectively.
\end{theorem}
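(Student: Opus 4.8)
The plan is to split the argument into an existence part, handled by restricting the solutions already in hand, and a uniqueness part, handled by gluing an arbitrary hybrid solution into a single solution of the "straight" problem. For existence I would set $\mathfrak{u}(x):=u(x)$ on $[a,c]$ and $\mathfrak{x}(u):=x(u)$ on $[u(c),u_{b}]$, where $u$ and $x$ solve \eqref{Intro_Equation}, \eqref{Intro_boundary_conditions} and \eqref{Intro_Equation_Inverse}, \eqref{Intro_boundary_conditions_Inverse} respectively. Then \eqref{c_family_problem_straight} and \eqref{c_family_problem_inverse} hold by construction, $\mathfrak{u}(a)=0$ and $\mathfrak{x}(u_{b})=b$ are immediate, and Lemma \ref{lemma_monotonicity_of_u} gives $\mathfrak{u}(c)=u(c)<u(b)=u_{b}$, so $\mathfrak{u}(c)\neq u_{b}$. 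The matching conditions \eqref{c_family_problem_match_conditions} follow from $x=u^{-1}$ together with the inverse-function rule: $\mathfrak{x}(\mathfrak{u}(c))=x(u(c))=c$ and $\mathfrak{x}^{\prime}(\mathfrak{u}(c))=x^{\prime}(u(c))=1/u^{\prime}(c)=1/\mathfrak{u}^{\prime}(c)$.

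For uniqueness, let $(\mathfrak{u},\mathfrak{x})$ be any pair satisfying \eqref{c_family_problem_straight}, \eqref{c_family_problem_inverse}, \eqref{c_family_problem_match_conditions} and write $c_{1}:=\mathfrak{u}(c)$, noting that the interval in \eqref{c_family_problem_inverse} presupposes $c_{1}<u_{b}$. The first step is to show that $\mathfrak{x}$ is invertible. The matching condition forces $\mathfrak{u}^{\prime}(c)\neq 0$, hence $\mathfrak{x}^{\prime}(c_{1})=1/\mathfrak{u}^{\prime}(c)\neq 0$; applying the Bernoulli representation of Lemma \ref{lemma_represent_of_solution_inv_problem} to $\mathfrak{x}$ (after extending $\mathcal{N}$ to $C^{1}(\mathbb{R}^{2})$ by Whitney's theorem, exactly as in the proof of Theorem \ref{theorem_uniqueness_of_solution_to_inverse_problem}) shows, as in Lemma \ref{lemma_monotone_invrse_solution}, that $\mathfrak{x}^{\prime}$ cannot vanish anywhere on $[c_{1},u_{b}]$. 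Thus $\mathfrak{x}$ is strictly monotone, and since $\mathfrak{x}(c_{1})=c<b=\mathfrak{x}(u_{b})$ it increases and maps $[c_{1},u_{b}]$ onto $[c,b]$, so $\mathfrak{x}^{-1}$ is well defined on $[c,b]$.

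Next I would glue the pieces, setting $U(x):=\mathfrak{u}(x)$ for $x\in[a,c]$ and $U(x):=\mathfrak{x}^{-1}(x)$ for $x\in[c,b]$. The two definitions agree at $x=c$, where $\mathfrak{u}(c)=c_{1}=\mathfrak{x}^{-1}(c)$, and the matching condition \eqref{c_family_problem_match_conditions} makes $U$ of class $C^{1}$ there, since $(\mathfrak{x}^{-1})^{\prime}(c)=1/\mathfrak{x}^{\prime}(c_{1})=\mathfrak{u}^{\prime}(c)$. A short computation of $(\mathfrak{x}^{-1})^{\prime\prime}$ from \eqref{c_family_problem_inverse} shows that $\mathfrak{x}^{-1}$ satisfies $U^{\prime\prime}(x)=\mathcal{N}(U(x),x)$ on $[c,b]$, the same equation $\mathfrak{u}$ satisfies on $[a,c]$; continuity of $U$ and $U^{\prime}$ at $c$ then yields $U^{\prime\prime}(c^{-})=\mathcal{N}(U(c),c)=U^{\prime\prime}(c^{+})$, so $U\in C^{2}([a,b])$ and solves \eqref{Intro_Equation} with $U(a)=0$ and $U(b)=\mathfrak{x}^{-1}(b)=u_{b}$. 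By the uniqueness of the solution to BVP \eqref{Intro_Equation}, \eqref{Intro_boundary_conditions} guaranteed by \eqref{Intro_Nonlin_cond}, we get $U\equiv u$. Restricting to $[a,c]$ gives $\mathfrak{u}=u$, and since $\mathfrak{x}^{-1}=u$ on $[c,b]$ we obtain $\mathfrak{x}=u^{-1}=x$ on $[c_{1},u_{b}]=[\mathfrak{u}(c),u_{b}]$, as claimed.

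I expect the main obstacle to be the gluing step rather than existence: one must secure that $\mathfrak{x}$ is invertible with range \emph{exactly} $[c,b]$, so that the two domains join seamlessly into $[a,b]$, and then verify $C^{2}$-regularity of $U$ across $x=c$. This is precisely where the cubic structure of the "inverse" equation \eqref{c_family_problem_inverse} and the two matching conditions \eqref{c_family_problem_match_conditions} are indispensable, the latter being engineered so that both the value and the first derivative of $U$ are continuous at the junction.
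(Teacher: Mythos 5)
Your proposal is correct and follows essentially the same route as the paper: existence by restricting $u(x)$ and its inverse, and uniqueness by gluing an arbitrary hybrid pair $(\mathfrak{u},\mathfrak{x})$ into a single $C^{2}$ solution of the "straight" BVP \eqref{Intro_Equation}, \eqref{Intro_boundary_conditions} (using the Whitney extension, the Bernoulli-representation argument of Lemmas \ref{lemma_represent_of_solution_inv_problem}--\ref{lemma_monotone_invrse_solution} for invertibility of $\mathfrak{x}$, and the uniqueness of that BVP). The only difference is that you spell out the $C^{1}$/$C^{2}$ matching at $x=c$ and the surjectivity of $\mathfrak{x}$ onto $[c,b]$, which the paper leaves implicit.
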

\begin{proof}
For any given $c\in (a,b)$ we can easily construct a pair of two times continuously differentiable functions $\mathfrak{u}(x)$ and $\mathfrak{x}(u)$ which is a solution to "hybrid" problem \eqref{c_family_problem_straight}, \eqref{c_family_problem_inverse}, \eqref{c_family_problem_match_conditions}. Indeed, since, according to Lemma \ref{lemma_monotonicity_of_u}, function $u(x)$ is monotone, the pair defined like this
$$\mathfrak{u}(x) = u(x), \; \forall x\in [a, c],$$
$$\mathfrak{x}(u) = u^{-1}(u),\; \forall u\in [\mathfrak{u}(c), u_{b}]$$
fulfills all the requirements. The existence is proved.

Now, let $\mathcal{N}(u,x)$ be an extension of the original right hand side function, belonging to $C^{1}(\mathbb{R}^{2})$ (it exists according to the {\it Whitney's extension theorem}, \cite[Theorem I]{Whitney_extensions}).
For any pair of functions $\mathfrak{u}(x),$ $\mathfrak{x}(u)$ satisfying conditions \eqref{c_family_problem_straight}, \eqref{c_family_problem_inverse}, \eqref{c_family_problem_match_conditions} we can consider an auxiliary function
$$u_{\ast}(x) = \left\{
                  \begin{array}{cc}
                    \mathfrak{u}(x), & \forall x\in [a, c], \\
                    \mathfrak{x}^{-1}(u), & \forall x\in (c, b], \\
                  \end{array}
                \right.
$$
which, according to Lemma \ref{lemma_monotone_invrse_solution}, whose conditions are obviously fulfilled with $x_{\ast}(u) = \mathfrak{x}(u)\in C^{2}([\mathfrak{u}(c), u_{b}])$, is well defined. From conditions \eqref{c_family_problem_straight}, \eqref{c_family_problem_inverse}, \eqref{c_family_problem_match_conditions} it follows that function $u_{\ast}(x)$  belongs to $C^{2}([a, b])$ and satisfies BVP \eqref{Intro_Equation}, \eqref{Intro_boundary_conditions}, which has a unique solution. This yields the uniqueness of the pair $\mathfrak{u}(x), \mathfrak{x}(u)$ and thus completes the proof.
\end{proof}

Let $[b-\varepsilon , b]$ be a narrow (i.e., $(b-a)/\varepsilon \gg 1$) interval of rapid variation for the solution $u(x),$ where $u^{\prime}(x) > 1$ and $u^{\prime}(b) \gg 1,$ also known as the {\it boundary layer}. Then, for $c = b-\varepsilon,$ problem \eqref{c_family_problem_straight}, \eqref{c_family_problem_inverse}, \eqref{c_family_problem_match_conditions} is non-stiff (or considerably less stiff, as compared to the original problem \eqref{Intro_Equation}, \eqref{Intro_boundary_conditions}). Indeed, as it was pointed out above, $[b-\varepsilon , b]$ is the only boundary layer of the solution $u(x),$ provided that conditions \eqref{Intro_Nonlin_cond} and \eqref{lema_1_positiveness_of_nonlinearity} are satisfied. The latter allows us to conclude that the variation of $\mathfrak{u}(x) = u(x)$ on $[a, c]$ is rather moderate. The same is true with respect to function $\mathfrak{x}(u) = x(u),$ since, apparently, $0 < x^{\prime}(u) < 1,$ $\forall u\in [u(c), u_{b}].$

The above property of the "hybrid" problem is the key to the SI-method, which, instead of approximating the solution of the original (potentially stiff) BVP \eqref{Intro_Equation}, \eqref{Intro_boundary_conditions}, solves a non-stiff (less stiff) problem \eqref{c_family_problem_straight}, \eqref{c_family_problem_inverse}, \eqref{c_family_problem_match_conditions}. In such a way, by solving a simpler, from the computational point of view, problem we still get the solution of a more complex problem partially approximated (Theorem \ref{theorem_meaning_of_c_family_problem}). Granted, the SI-method does not allow us to approximate solution $u(x)$ on $[c, b].$ The latter, however, is a fundamental problem: to get an efficient approximation of a function on an interval where its derivatives can take arbitrary big absolute values.

\section{SI-method: numerical aspect}\label{section_SI_method_and_its_numerical_proeprties}
In this section we describe and justify a numerical scheme (one out of many possible) for solving the "hybrid" problem introduced above.

Let $c$ be some arbitrary fixed point from $(a, b).$ In order to approximate the solution of "hybrid" problem \eqref{c_family_problem_straight}, \eqref{c_family_problem_inverse}, \eqref{c_family_problem_match_conditions} we suggest to divide the intervals $[a, c]$ and $[0, u_{b}]$ into subintervals
\begin{equation}\label{discretization_x}
  \delta_{x} = \left\{ [x_{i-1}, x_{i}],\; i\in \overline{1,\ N_{1}} \right\},
\end{equation}
$$ x_{0} = a,\; x_{N_{1}} = c,\; x_{i-1} < x_{i}\; \forall i\in \overline{ 1, N_{1}}$$
and
\begin{equation}\label{discretization_u}
  \delta_{u} = \left\{ [u_{i-1}, u_{i}], \; i\in\overline{1, N_{2}}\right\}
\end{equation}
$$u_{0} = 0,\; u_{N_{2}} = u_{b},\; u_{i-1} < u_{i},\; \forall i\in\overline{1, N_{2}}$$
respectively.

Consider a pair of functions $\tilde{u}(x)$ and $\tilde{x}(u)$ satisfying the following conditions:
\begin{enumerate}
  \item \label{condition_one} function $\tilde{u}(x)$ is a solution to the equation
\begin{equation}\label{discretized_equation_straight}
  \tilde{u}^{\prime\prime}(x) = \alpha(\mathbb{P}_{x}[\tilde{u}^{\prime}(x)], \mathbb{P}_{x}[\tilde{u}(x)], x)\tilde{u}(x),\; x\in [a, c],\; \tilde{u}(x) \in C^{1}([a, c]),
\end{equation}
where
$$\mathbb{P}_{x}[f(x)] = f(x_{i}),\; x\in [x_{i}, x_{i+1}], \; \forall i\in \overline{0, N_{1}-1},$$
\begin{equation}\label{discretized_equation_straight_term_definition}
  \alpha(u^{\prime}, u, x) = \alpha_{i}(u^{\prime}, u, x)
\end{equation}
$$= \left(N^{\prime}_{u}(u, x_{i})u^{\prime} + N^{\prime}_{x}(u, x_{i})\right) (x-x_{i}) + N(u, x_{i}), \; x\in [x_{i}, x_{i+1}], \; \forall i \in \overline{0, N_{1}-1}$$
and satisfy the inequality
\begin{equation}\label{c_inequality_for_straight_discretization}
  0 < \tilde{u}(c) < u_{b};
\end{equation}
  \item \label{condition_two} function $\tilde{x}(u)$ is a solution to the equation
\begin{equation}\label{discretized_equation_inverse}
  \tilde{x}^{\prime\prime}(u) = \beta(\mathbb{P}_{u}[\tilde{x}^{\prime}(u)], \mathbb{P}_{u}[\tilde{x}(u)], u)\left(\tilde{x}^{\prime}(u)\right)^{3},\; u\in [\tilde{u}(c), b],\; \tilde{x}(u) \in C^{1}([\tilde{u}(c), b]),
\end{equation}
where
$$\mathbb{P}_{u}[f(u)] = f(\bar{u}_{i}),\; u\in [\bar{u}_{i}, \bar{u}_{i+1}],\; \forall i\in\overline{0, N_{2}-1}$$
\begin{equation}\label{discretized_equation_inverse_term_definition}
  \beta(x^{\prime}, x, u) = \beta_{i}(x^{\prime}, x, u)
\end{equation}
$$= -\left(\mathcal{N}^{\prime}_{u}(\bar{u}_{i}, x) + \mathcal{N}^{\prime}_{x}(\bar{u}_{i}, x)x^{\prime}\right)(u-\bar{u}_{i}) - \mathcal{N}(\bar{u}_{i}, x),\; u\in [\bar{u}_{i}, \bar{u}_{i+1}],\; \forall i\in\overline{0, N_{2}-1}$$
\begin{equation}\label{definition_of_u_bar}
  \bar{u}_{i} = \bar{u}_{i}(\tilde{u}(c)) = \left\{
       \begin{array}{cc}
         u_{i}, & u_{i}> \tilde{u}(c), \\
         \tilde{u}(c), & u_{i}\leq \tilde{u}(c), \\
       \end{array}
     \right.\; \forall i\in\overline{0, N_{2}-1}.\footnote{As one can notice, $\bar{u}_{i} = \bar{u}_{j} = \tilde{u}(c),$ for all indices $i, j \in \overline{0, N_{2}-1}, \; $ such that $ u_{i}, u_{j}\leq \tilde{u}(c).$ By introducing the new discretization points $\bar{u}_{i}$ we aim to simplify some formulas coming later in this section.}
\end{equation}
  \item \label{condition_three} functions $\tilde{u}(x)$ and $\tilde{x}(u)$ satisfy the boundary conditions
\begin{equation}\label{discretized_equations_boundaryConditions}
  \tilde{u}(0) = 0, \; \tilde{x}(u_{b}) = b.
\end{equation}
and the "matching" conditions
\begin{equation}\label{matching_conditions_u}
  \tilde{x}(\tilde{u}(c)) = c,\;
  \tilde{x}^{\prime}(\tilde{u}(c)) = \frac{1}{\tilde{u}^{\prime}(c)}.
\end{equation}
\end{enumerate}

\begin{lemma}\label{lemma_about_inverse_to_x_tilde}
Let $\tilde{u}(x)$ and $\tilde{x}(u)$ be a pair of functions, satisfying conditions $\ref{condition_one}, \ref{condition_two}, \ref{condition_three}.$ Then the inverse function $\tilde{x}^{-1}(x)$ exists on $[c, b]$ and belongs to $C^{1}([c, b]).$
\end{lemma}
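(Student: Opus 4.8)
The plan is to reduce the whole statement to one fact: that $\tilde{x}^{\prime}(u)\neq 0$ for every $u\in[\tilde{u}(c), u_{b}]$. Once this is established, $\tilde{x}$ will be strictly monotone on $[\tilde{u}(c), u_{b}]$ with image exactly $[c,b]$, and the existence and $C^{1}$-smoothness of $\tilde{x}^{-1}$ will follow from the one-dimensional inverse function theorem. Thus the core of the argument is the discrete analogue of Lemma \ref{lemma_monotone_invrse_solution}.

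The key structural observation is that, because the projection operators $\mathbb{P}_{u}$ freeze their arguments at the left grid points $\bar{u}_{i}$, on each subinterval $[\bar{u}_{i}, \bar{u}_{i+1}]$ equation \eqref{discretized_equation_inverse} collapses to
$$\tilde{x}^{\prime\prime}(u) = g_{i}(u)\,\bigl(\tilde{x}^{\prime}(u)\bigr)^{3},\qquad u\in[\bar{u}_{i},\bar{u}_{i+1}],$$
where $g_{i}(u) = \beta_{i}\bigl(\tilde{x}^{\prime}(\bar{u}_{i}),\tilde{x}(\bar{u}_{i}),u\bigr)$ is, by \eqref{discretized_equation_inverse_term_definition}, a known continuous (indeed affine) function of $u$ alone. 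This is a Bernoulli-type equation in $p:=\tilde{x}^{\prime}$, and exactly as in the proof of Lemma \ref{lemma_represent_of_solution_inv_problem} it admits the representation
$$\frac{1}{\tilde{x}^{\prime}(u)^{2}} = \frac{1}{\tilde{x}^{\prime}(\bar{u}_{i})^{2}} - 2\int_{\bar{u}_{i}}^{u} g_{i}(s)\,ds,\qquad u\in[\bar{u}_{i},\bar{u}_{i+1}],$$
valid as long as $\tilde{x}^{\prime}$ does not vanish.

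I would then argue by induction over the subintervals. At the left endpoint the matching condition \eqref{matching_conditions_u} gives $\tilde{x}^{\prime}(\tilde{u}(c)) = 1/\tilde{u}^{\prime}(c)\neq 0$, providing a nonzero starting value (note that $\tilde{u}^{\prime}(c)$ must be finite and nonzero for \eqref{matching_conditions_u} to be well posed). Assuming $\tilde{x}^{\prime}(\bar{u}_{i})\neq 0$, let $u_{\ast}$ be the first point of $[\bar{u}_{i},\bar{u}_{i+1}]$ at which $\tilde{x}^{\prime}$ would vanish. On $[\bar{u}_{i},u_{\ast})$ the displayed representation holds; its right-hand side is continuous, hence finite at $u_{\ast}$, while the left-hand side satisfies $1/\tilde{x}^{\prime}(u)^{2}\to+\infty$ as $u\to u_{\ast}^{-}$, since $\tilde{x}^{\prime}\in C^{1}$ is continuous and tends to $0$. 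This contradiction shows $\tilde{x}^{\prime}$ has no zero on $[\bar{u}_{i},\bar{u}_{i+1}]$; in particular $\tilde{x}^{\prime}(\bar{u}_{i+1})\neq 0$, and continuity of $\tilde{x}^{\prime}$ (condition \ref{condition_two}) carries the nonvanishing to the next subinterval. Iterating over $i$ yields $\tilde{x}^{\prime}(u)\neq 0$ on all of $[\tilde{u}(c), u_{b}]$. This inductive propagation across the frozen-coefficient subintervals is the step I expect to be the main obstacle, because the coefficient $g_{i}$ jumps from one subinterval to the next and only the $C^{1}$-gluing of $\tilde{x}$ links them.

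Finally, since $\tilde{x}^{\prime}$ is continuous and never zero, it keeps a constant sign, so $\tilde{x}$ is strictly monotone on $[\tilde{u}(c), u_{b}]$. The values $\tilde{x}(\tilde{u}(c)) = c$ and $\tilde{x}(u_{b}) = b$ coming from \eqref{matching_conditions_u} and \eqref{discretized_equations_boundaryConditions}, together with $c<b$, force $\tilde{x}$ to be increasing with image exactly $[c,b]$; hence $\tilde{x}\colon[\tilde{u}(c), u_{b}]\to[c,b]$ is a continuous strictly increasing bijection. Its inverse $\tilde{x}^{-1}$ is therefore well defined on $[c,b]$, and because $\tilde{x}\in C^{1}$ with $\tilde{x}^{\prime}\neq 0$, the inverse function theorem gives $\tilde{x}^{-1}\in C^{1}([c,b])$ with $(\tilde{x}^{-1})^{\prime}(x) = 1/\tilde{x}^{\prime}(\tilde{x}^{-1}(x))$, completing the proof.
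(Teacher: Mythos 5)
Your proposal is correct and follows essentially the same route as the paper: the paper's proof also rests on the explicit Bernoulli-type representation of $\tilde{x}^{\prime}(u)$ (its formula \eqref{explicite_derivative_of_x}), combined with the observation that the matching condition \eqref{matching_conditions_u} forces $\tilde{u}^{\prime}(c)\neq 0$, to conclude that $\tilde{x}^{\prime}$ never vanishes and hence $\tilde{x}$ is monotone with a $C^{1}$ inverse. Your version merely makes explicit the subinterval-by-subinterval induction and the contradiction at a hypothetical first zero of $\tilde{x}^{\prime}$, details the paper compresses into a single displayed formula.
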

\begin{proof}
Indeed, from \eqref{discretized_equation_inverse}, \eqref{discretized_equation_inverse_term_definition} it follows that
\begin{equation}\label{explicite_derivative_of_x}
  \tilde{x}^{\prime}(u) = \frac{\tilde{u}^{\prime}(c)}{\sqrt{1 - 2\left(\tilde{u}^{\prime}(c)\right)^{2}\int\limits_{\bar{u}_{i}}^{u}\beta(\mathbb{P}_{u}[\tilde{x}^{\prime}(\xi)], \mathbb{P}_{u}[\tilde{x}(\xi)], \xi)d\xi}}.
\end{equation}
The fact that both $\tilde{u}(x)$ and $\tilde{x}(u)$ are continuously differentiable (each on its domain) and condition \eqref{matching_conditions_u} yield us
$$\tilde{u}^{\prime}(c) \neq 0,$$
which, in the light of formula \eqref{explicite_derivative_of_x}, guarantees that $\tilde{x}(u)$ is monotone on $[\tilde{u}(c), u_{b}].$
\end{proof}

\begin{theorem}\label{Existence_theorem}
Let conditions \eqref{lema_1_positiveness_of_nonlinearity} and
\begin{equation}\label{positiveness_of_N_derivative_with_respect_to_x}
  N^{\prime}_{x}(u, x), N^{\prime}_{u}(u, x), N^{\prime}_{xu}(u, x), N^{\prime}_{uu}(u, x) \geq 0,\; \forall u\in [0, u_{b}], \; \forall x\in [a,b]
\end{equation}
 hold true. Then for an arbitrary fixed $c$ from $(a, b),$ there exists a pair of functions $\tilde{u}(x),$ $\tilde{x}(u)$ satisfying conditions $\ref{condition_one}, \ref{condition_two}, \ref{condition_three}$ and the following inequalities:
 \begin{equation}\label{boundaries_for_u_tilde}
0 < \tilde{u}(x) < u_{b},\; 0 < \tilde{u}^{\prime}(x),\; 0 \leq \tilde{u}^{\prime\prime}(x),\; \forall x\in (a,c],
\end{equation}
\begin{equation}\label{boundaries_for_x_tilde}
c < \tilde{x}(u) < b,\; \tilde{x}^{\prime}(u) > 0,\; \tilde{x}^{\prime\prime}(u) \leq 0,\; \forall u\in (\tilde{u}(c),u_{b}].
\end{equation}
\end{theorem}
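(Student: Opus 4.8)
The plan is to reduce the problem to a one-dimensional shooting argument in the single parameter $s = \tilde{u}^{\prime}(a) > 0$. Since $\tilde{u}(a) = 0$ is prescribed, the whole pair $(\tilde{u}, \tilde{x})$ is determined by $s$: I would first integrate the straight equation \eqref{discretized_equation_straight} forward on $[a, c]$ to obtain $\tilde{u}(c; s)$ and $\tilde{u}^{\prime}(c; s)$, then feed the matching conditions \eqref{matching_conditions_u} as initial data into the inverse equation \eqref{discretized_equation_inverse}, integrate it forward on $[\tilde{u}(c; s), u_{b}]$, and finally impose the remaining boundary condition $\tilde{x}(u_{b}) = b$ from \eqref{discretized_equations_boundaryConditions}. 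Existence then amounts to finding a root of the scalar map $G(s) := \tilde{x}(u_{b}; s) - b$.

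For the straight part, note that on each subinterval $[x_{i}, x_{i+1}]$ the coefficient $\alpha_{i}$ in \eqref{discretized_equation_straight_term_definition} is frozen at the left endpoint, so \eqref{discretized_equation_straight} is a genuinely linear second-order ODE whose coefficient is an explicit affine function of $x$; it is uniquely solvable and depends continuously on the left-endpoint data $(\tilde{u}(x_{i}), \tilde{u}^{\prime}(x_{i}))$. I would then run an induction over $i$: using $N \geq 0$ from \eqref{lema_1_positiveness_of_nonlinearity} together with $N^{\prime}_{u}, N^{\prime}_{x} \geq 0$ from \eqref{positiveness_of_N_derivative_with_respect_to_x}, one checks that $\alpha_{i} \geq 0$ whenever the frozen data satisfy $\tilde{u}(x_{i}) \geq 0$, $\tilde{u}^{\prime}(x_{i}) \geq 0$. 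Hence $\tilde{u}^{\prime\prime} = \alpha \tilde{u} \geq 0$ and, starting from $\tilde{u}(a) = 0$, $\tilde{u}^{\prime}(a) = s > 0$, the solution stays positive, increasing and convex, propagating the hypotheses to the next subinterval. This yields the first three inequalities of \eqref{boundaries_for_u_tilde} on $(a, c]$. The extra conditions $N^{\prime}_{uu}, N^{\prime}_{xu} \geq 0$ make $\alpha_{i}$ monotone increasing in both frozen arguments, from which a comparison argument across subintervals shows $\tilde{u}(c; s)$ and $\tilde{u}^{\prime}(c; s)$ are continuous and strictly increasing in $s$; moreover $\tilde{u}(c; s) \to 0$ as $s \to 0^{+}$ (continuity at the trivial solution) while convexity gives $\tilde{u}(c; s) \geq s(c-a) \to +\infty$. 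Consequently there is a unique $s_{\max}$ with $\tilde{u}(c; s_{\max}) = u_{b}$, and for $s \in (0, s_{\max})$ the bound $0 < \tilde{u}(c; s) < u_{b}$ of \eqref{c_inequality_for_straight_discretization} holds.

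For the inverse part, fix $s \in (0, s_{\max})$, set $v_{0} = \tilde{u}(c; s)$ and solve \eqref{discretized_equation_inverse} with the initial data $\tilde{x}(v_{0}) = c$, $\tilde{x}^{\prime}(v_{0}) = 1/\tilde{u}^{\prime}(c; s) > 0$ dictated by \eqref{matching_conditions_u}. Writing $\mathcal{N} = Nu$ and using $u \geq 0$, $\tilde{x}^{\prime} > 0$ together with \eqref{lema_1_positiveness_of_nonlinearity}, \eqref{positiveness_of_N_derivative_with_respect_to_x}, one gets $\beta_{i} \leq 0$ in \eqref{discretized_equation_inverse_term_definition}. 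The explicit representation \eqref{explicite_derivative_of_x} then shows the radicand stays $\geq 1$, so $\tilde{x}^{\prime}$ never blows up, the inverse solution exists on all of $[v_{0}, u_{b}]$, is strictly increasing (cf. Lemma \ref{lemma_about_inverse_to_x_tilde}), and satisfies $\tilde{x}^{\prime\prime} = \beta (\tilde{x}^{\prime})^{3} \leq 0$, i.e. it is concave. Monotonicity gives $c = \tilde{x}(v_{0}) < \tilde{x}(u)$, establishing the lower bounds in \eqref{boundaries_for_x_tilde}; the upper bound $\tilde{x}(u) < b$ follows once the target $\tilde{x}(u_{b}) = b$ is reached.

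It remains to solve $G(s) = 0$. Continuity of $G$ on $(0, s_{\max})$ follows from continuous dependence on $s$ of the straight data and from the explicit formula \eqref{explicite_derivative_of_x} for the inverse branch. As $s \to s_{\max}^{-}$ the interval $[v_{0}, u_{b}]$ collapses to a point, so $\tilde{x}(u_{b}; s) \to \tilde{x}(v_{0}) = c < b$; as $s \to 0^{+}$ we have $v_{0} \to 0$ and $\tilde{x}^{\prime}(v_{0}) = 1/\tilde{u}^{\prime}(c; s) \to +\infty$, and I would show via the slope estimate from \eqref{explicite_derivative_of_x} that $\tilde{x}(u_{b}; s)$ eventually exceeds $b$. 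The intermediate value theorem then furnishes $s^{\ast} \in (0, s_{\max})$ with $\tilde{x}(u_{b}; s^{\ast}) = b$, and the associated pair satisfies all of conditions \ref{condition_one}--\ref{condition_three} together with \eqref{boundaries_for_u_tilde} and \eqref{boundaries_for_x_tilde}. I expect the main obstacle to be precisely this last limit: bounding $\tilde{x}(u_{b}; s)$ from below as $s \to 0^{+}$, since $\beta$ itself contains the term $\mathcal{N}^{\prime}_{x}\tilde{x}^{\prime}$ that grows with the large initial slope, so the estimate requires a Gronwall-type argument rather than a direct computation.
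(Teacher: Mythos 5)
Your strategy coincides with the paper's own proof: shoot in the initial slope $s=\tilde u^{\prime}(a)$, establish monotonicity and continuity of the straight branch with respect to $s$ (the paper's Lemmas \ref{lemma_about_monotonicity_of_u_nu}--\ref{lemma_about_a_point_nu_Ast}, your $s_{\max}$ being the paper's $\nu^{\ast}$), pass through the matching conditions \eqref{matching_conditions_u} to the inverse branch, and apply the intermediate value theorem to $G(s)=\tilde x(u_{b};s)-b$. The ingredients you do verify are sound and are exactly the paper's: $\alpha_{i}\geq 0$ propagates positivity, monotonicity and convexity of $\tilde u$; $\beta_{i}\leq 0$ makes the radicand in \eqref{explicite_derivative_of_x} stay $\geq 1$, giving global existence, monotonicity and concavity of $\tilde x$; and $G(s)\to c-b<0$ as $s\uparrow s_{\max}$ because concavity bounds $\tilde x^{\prime}$ by its (bounded) initial value while the interval of integration collapses.

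There is, however, a genuine gap, and it is the one you flag yourself: you never prove that $G(s)>0$ for some small $s>0$; you only say you ``would show'' it via a Gronwall-type argument. This is not a routine detail --- it is the crux of the existence proof and occupies the bulk of the paper's Lemma \ref{lemma_x_tend_to_infty_when_nu_tend_to_zero}. The obstacle you correctly identify (the term $\mathcal N^{\prime}_{x}\tilde x^{\prime}$ inside $\beta$ grows as the initial slope $\tilde x^{\prime}(v_{0})=1/\tilde u^{\prime}(c;s)\to\infty$) is resolved in the paper not by Gronwall but by a direct computation: concavity of $\tilde x_{\nu}$ together with the mean value theorem gives the a priori bound $\bar u_{i}\,\tilde x^{\prime}_{\nu}(\bar u_{i})\leq b-a$ at every knot, so $-\beta$ is bounded above by $T\xi+M\left(\xi-\tilde u_{\nu}(c)\right)$ with $T,M$ depending only on the data of the problem and not on $s$; completing the square under the radical and integrating explicitly then yields the logarithmic lower bound \eqref{inequality_to_estimate_u_prime_c_from_below}, namely $\tilde x_{\nu}(u_{b})-c\geq \frac{1}{Q}\ln\left(\mathrm{const}/\tilde u^{\prime}_{\nu}(c)\right)$, which exceeds $b-c$ as soon as $\tilde u^{\prime}_{\nu}(c)$ drops below the explicit threshold $\tilde u^{\prime}_{\ast}$ of \eqref{estimate_from_above_for_u_peimw_c}; continuity of $\tilde u^{\prime}_{\nu}(c)$ in $\nu$ together with $\tilde u^{\prime}_{0}(c)=0$ then produces the required small parameter value (your small $s$, the paper's $\nu_{\ast}$). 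Note that a crude estimate would not suffice here: the overshoot diverges only logarithmically in $1/\tilde u^{\prime}(c;s)$, so the bound on $\beta$ must be uniform in $s$, which is precisely what the concavity trick delivers. Until this estimate is supplied, your intermediate value argument does not close.
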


In order to prove Theorem \ref{Existence_theorem} we first need to prove a few auxiliary statements below.

\begin{lemma}\label{lemma_about_monotonicity_of_u_nu}
Let $\tilde{u}_{\nu}(x) \in C^{1}([a, c])$ denote the solution to equation \eqref{discretized_equation_straight}, \eqref{discretized_equation_straight_term_definition} subjected to initial conditions
\begin{equation}\label{auxiliary_problem_init_conditions}
  \tilde{u}_{\nu}(0) = 0,\; \tilde{u}_{\nu}^{\prime}(0) = \nu
\end{equation}
and let conditions \eqref{lema_1_positiveness_of_nonlinearity} and \eqref{positiveness_of_N_derivative_with_respect_to_x} hold true.
Then $\forall \nu, \bar{\nu} \in [0, +\infty),$ $\forall x\in [a, c]$
\begin{equation}\label{auxiliary_problem_monotonicity_function}
  \tilde{u}_{\nu}(x) > \tilde{u}_{\bar{\nu}}(x),
\end{equation}
\begin{equation}\label{auxiliary_problem_monotonicity_derivative}
  \tilde{u}_{\nu}^{\prime}(x) > \tilde{u}_{\bar{\nu}}^{\prime}(x),
\end{equation}
provided that
\begin{equation}\label{auxiliary_problem_inequality_for_initial_derivatives}
  \nu > \bar{\nu}.
\end{equation}
\end{lemma}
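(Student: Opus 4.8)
The plan is to prove the two inequalities \eqref{auxiliary_problem_monotonicity_function} and \eqref{auxiliary_problem_monotonicity_derivative} simultaneously by a continuity/ODE-comparison argument, exploiting the sign conditions \eqref{lema_1_positiveness_of_nonlinearity} and \eqref{positiveness_of_N_derivative_with_respect_to_x}. Write $w(x) = \tilde{u}_{\nu}(x) - \tilde{u}_{\bar{\nu}}(x)$. At the left endpoint $x=a$ (i.e.\ $x=0$ in the initial data) we have $w(a)=0$ and $w'(a)=\nu-\bar{\nu}>0$ by \eqref{auxiliary_problem_inequality_for_initial_derivatives}. So both $w$ and $w'$ start out nonnegative, with $w'$ strictly positive, and the task is to show neither can turn around on $[a,c]$.

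First I would establish an auxiliary monotonicity fact that I expect to feed the main induction: under \eqref{lema_1_positiveness_of_nonlinearity} and \eqref{positiveness_of_N_derivative_with_respect_to_x}, the coefficient $\alpha(u',u,x)$ defined in \eqref{discretized_equation_straight_term_definition} is nonnegative and is nondecreasing in each of the arguments $u'$ and $u$ for $u' \geq 0$, $u \geq 0$, $x\in[x_i,x_{i+1}]$. Indeed $\alpha_i$ is affine in $(x-x_i)\geq 0$ with nonnegative slope $N'_u u' + N'_x$ and nonnegative constant term $N(u,x_i)$, and its partial derivatives with respect to $u'$ and $u$ are controlled by the nonnegativity of $N'_u$, $N'_x$, $N'_{uu}$, $N'_{xu}$ assumed in \eqref{positiveness_of_N_derivative_with_respect_to_x}. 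This monotonicity of $\alpha$ is exactly the structural input that will make the comparison close.

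The core step is then a "no first crossing" argument. Suppose, for contradiction, that the strict inequalities fail somewhere, and let $x_0\in(a,c]$ be the infimum of points where either $w(x_0)=0$ or $w'(x_0)=0$. On $[a,x_0)$ we have $\tilde{u}_{\nu}>\tilde{u}_{\bar{\nu}}\geq 0$ and $\tilde{u}'_{\nu}>\tilde{u}'_{\bar{\nu}}\geq 0$, so at the frozen node $x_i$ governing the subinterval containing $x_0$ the monotonicity of $\alpha$ gives $\alpha(\mathbb{P}_x[\tilde{u}'_{\nu}],\mathbb{P}_x[\tilde{u}_{\nu}],x)\geq \alpha(\mathbb{P}_x[\tilde{u}'_{\bar{\nu}}],\mathbb{P}_x[\tilde{u}_{\bar{\nu}}],x)\geq 0$. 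Subtracting the two instances of \eqref{discretized_equation_straight} yields a representation of $w''(x)$ as a sum of a nonnegative multiple of $w$ plus a nonnegative remainder coming from the coefficient difference times $\tilde{u}_{\bar{\nu}}(x)\geq 0$; hence $w''(x)\geq 0$ on $[a,x_0)$. Integrating, $w'$ is nondecreasing and stays $\geq w'(a)=\nu-\bar\nu>0$, so $w'(x_0)>0$, and then $w(x_0)=w(a)+\int_a^{x_0}w'>0$ as well. Both alternatives contradict the definition of $x_0$, so no such crossing exists and the strict inequalities persist on all of $(a,c]$.

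The main obstacle I anticipate is the handling of the piecewise-frozen coefficient $\alpha$ across node boundaries: because the arguments of $\alpha$ are projected via $\mathbb{P}_x$ onto the left node $x_i$, the coefficient is only piecewise continuous in $x$, and one must verify that the comparison inequality $\alpha(\cdot_\nu)\geq\alpha(\cdot_{\bar\nu})$ is preserved at each node and that $w,w'$ remain $C^1$-matched across nodes (which they are, since $\tilde{u}_{\nu},\tilde{u}_{\bar\nu}\in C^1([a,c])$). Care is also needed because the monotonicity of $\alpha$ in its first argument $u'$ is only valid for $u'\geq 0$; this is why the induction must carry the derivative positivity $\tilde{u}'_{\nu},\tilde{u}'_{\bar\nu}\geq 0$ alongside the function positivity, rather than proving \eqref{auxiliary_problem_monotonicity_function} in isolation. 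Establishing these two properties in tandem, with the sign structure from \eqref{positiveness_of_N_derivative_with_respect_to_x} ensuring $w''\geq 0$, is the crux of the argument.
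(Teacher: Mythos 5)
Your core mechanism is the same as the paper's: everything rests on the frozen coefficient $\alpha_i$ of \eqref{discretized_equation_straight_term_definition} being nonnegative and monotone in $(u',u)$ on the nonnegative quadrant (the paper's inequality \eqref{auxiliary_problem_inequalities_for_alpha_partial}), after which one subtracts the two instances of \eqref{discretized_equation_straight} and reads off a sign for $w''$, $w=\tilde{u}_{\nu}-\tilde{u}_{\bar{\nu}}$. The packaging differs only superficially: the paper runs an explicit induction over the mesh subintervals $[x_j,x_{j+1}]$ and closes each step with the maximum principle, while you run a global first-crossing argument and integrate $w''\geq 0$ directly.

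There is, however, a genuine gap in the step ``on $[a,x_0)$ we have $\tilde{u}_{\nu}>\tilde{u}_{\bar{\nu}}\geq 0$ and $\tilde{u}'_{\nu}>\tilde{u}'_{\bar{\nu}}\geq 0$.'' Your $x_0$ is defined as the first zero of $w$ or $w'$, so the definition of $x_0$ yields only the two strict inequalities between the solutions; the lower bounds $\tilde{u}_{\bar{\nu}}\geq 0$ and $\tilde{u}'_{\bar{\nu}}\geq 0$ do not follow from it, and they are indispensable in three places: they place the node values inside the region where $\alpha$ is nonnegative and monotone (so that both $\alpha(\mathbb{P}_{x}[\tilde{u}'_{\nu}],\mathbb{P}_{x}[\tilde{u}_{\nu}],x)\geq 0$ and the comparison of the two coefficients are legitimate), and they fix the sign of the remainder term (coefficient difference times $\tilde{u}_{\bar{\nu}}$) in your decomposition of $w''$ --- note that this term needs $\tilde{u}_{\bar{\nu}}(x)\geq 0$ pointwise, not merely at the nodes. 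You acknowledge that positivity must be ``carried alongside,'' but the crossing set you defined cannot carry it, and you cannot repair this by simply adding ``$\tilde{u}_{\bar{\nu}}=0$ or $\tilde{u}'_{\bar{\nu}}=0$'' to that set, since both equalities already hold at the left endpoint when $\bar{\nu}=0$ (indeed $\tilde{u}_{0}\equiv 0$). What is missing is a separate, prior single-solution statement: if a solution of \eqref{discretized_equation_straight} has nonnegative value and derivative at a node, then by \eqref{lema_1_positiveness_of_nonlinearity}, \eqref{positiveness_of_N_derivative_with_respect_to_x} its frozen coefficient is nonnegative on that subinterval, so it solves $v''=q(x)v$ with $q\geq 0$ and nonnegative data, hence value and derivative stay nonnegative up to the next node; induction over subintervals then gives nonnegativity of $\tilde{u}_{\bar{\nu}}$ and $\tilde{u}'_{\bar{\nu}}$ on all of $[a,c]$. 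This is precisely what the paper builds into its induction hypotheses \eqref{auxiliary_problem_assumption_for_mu}, \eqref{auxiliary_problem_assumption_for_nu}. With that preliminary lemma added, your first-crossing argument goes through.
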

\begin{proof}
Let $\tilde{u}_{\nu, \mu, i}(x) \in C^{1}([a,c])$ denote the solution of equation \eqref{discretized_equation_straight}, \eqref{discretized_equation_straight_term_definition} subjected to initial conditions
\begin{equation}\label{auxiliary_problem_init_conditions_extended}
  \tilde{u}(x_{i}) = \mu,\; \tilde{u}^{\prime}(x_{i}) = \nu, \; \mu \geq 0,\; \nu > 0,\;\forall i\in \overline{0, N_{1} - 1},
\end{equation}
so that $$\tilde{u}_{\nu}(x) \equiv \tilde{u}_{\nu, 0, 0}(x).$$

Let us fix some arbitrary $j\in \overline {0, N_{1}-1}$ and assume that
\begin{equation}\label{auxiliary_problem_assumption_for_mu}
  \tilde{u}_{\nu_{j}, \mu_{j}, j}(x_{j}) \geq \tilde{u}_{\bar{\nu}_{j}, \bar{\mu}_{j}, j}(x_{j}) \geq 0.
\end{equation}
\begin{equation}\label{auxiliary_problem_assumption_for_nu}
  \tilde{u}_{\nu_{j}, \mu_{j}, j}^{\prime}(x_{j})>\tilde{u}_{\bar{\nu}_{j}, \bar{\mu}_{j}, j}^{\prime}(x_{j}) > 0,
\end{equation}
Under the conditions of the lemma and assumptions \eqref{auxiliary_problem_assumption_for_nu}, \eqref{auxiliary_problem_assumption_for_mu} we are going to prove that
\begin{equation}\label{auxiliary_problem_monotonicity_function_partial}
  \tilde{u}_{\nu_{j}, \mu_{j}, j}(x) > \tilde{u}_{\bar{\nu}_{j}, \bar{\mu}_{j}, j}(x),\; \forall x\in(x_{j}, x_{j+1}],
\end{equation}
\begin{equation}\label{auxiliary_problem_monotonicity_derivative_partial}
  \tilde{u}_{\nu_{j}, \mu_{j}, j}^{\prime}(x) > \tilde{u}_{\bar{\nu}_{j}, \bar{\mu}_{j}, j}^{\prime}(x),\; \forall x\in(x_{j}, x_{j+1}].
\end{equation}
By definition, functions $\tilde{u}_{\nu_{j}, \mu_{j}, j}(x),$ $\tilde{u}_{\bar{\nu}_{j}, \bar{\mu}_{j}, j}(x)$ satisfy equations
\begin{equation}\label{auxiliary_problem_equation_partial_first}
  \tilde{u}_{\nu_{j}, \mu_{j}, j}^{\prime\prime}(x) - \alpha_{j}(\nu_{j}, \mu_{j},x)\tilde{u}_{\nu_{j}, \mu_{j}, j}(x) = 0,\; \forall x\in[x_{j}, x_{j+1}],
\end{equation}
\begin{equation}\label{auxiliary_problem_equation_partial_second}
  \tilde{u}_{\bar{\nu}_{j}, \bar{\mu}_{j}, j}^{\prime\prime}(x) - \alpha_{j}(\bar{\nu}_{j}, \bar{\mu}_{j},x)\tilde{u}_{\bar{\nu}_{j}, \bar{\mu}_{j}, j}(x) = 0,\; \forall x\in[x_{j}, x_{j+1}]
\end{equation}
respectively.
It is easy to verify, that under conditions \eqref{lema_1_positiveness_of_nonlinearity}, \eqref{positiveness_of_N_derivative_with_respect_to_x} the inequality
\begin{equation}\label{auxiliary_problem_inequalities_for_alpha_partial}
  \alpha_{j}(\nu_{j}, \mu_{j},x) \geq \alpha_{j}(\bar{\nu}_{j}, \bar{\mu}_{j},x) \geq 0,\; \forall x\in (x_{j}, x_{j+1}]
\end{equation}
holds true.

Subtracting \eqref{auxiliary_problem_equation_partial_second} from \eqref{auxiliary_problem_equation_partial_first} and using inequalities \eqref{auxiliary_problem_inequalities_for_alpha_partial}, we get the estimate
\begin{equation}\label{auxiliary_problem_estimate_for_dofference_partial}
  w^{\prime\prime}(x) - \alpha_{j}(\nu_{j}, \mu_{j},x) w(x) \geq 0,\; w(x) = \tilde{u}_{\nu_{j}, \mu_{j}, j}(x) - \tilde{u}_{\bar{\nu}_{j}, \bar{\mu}_{j}, j}(x),\; \forall x\in [x_{j}, x_{j+1}].
\end{equation}
From \eqref{auxiliary_problem_assumption_for_mu} and \eqref{auxiliary_problem_assumption_for_nu} it follows that
$$w(x_{j}) \geq 0,\; w^{\prime}(x_{j}) > 0,$$
which, in conjunction with the {\it maximum principle} (see, for example, \cite[Theorems 3, 4, p. 6--7]{Protter_Weinberger_Max_principle})), yields us the inequality
$$w^{\prime}(x) > 0,\; \forall x\in [x_{j}, x_{j+1}].$$ The latter automatically implies inequalities \eqref{auxiliary_problem_monotonicity_function_partial} \eqref{auxiliary_problem_monotonicity_derivative_partial}.

By now we proved that if conditions \eqref{auxiliary_problem_assumption_for_mu}, \eqref{auxiliary_problem_assumption_for_nu} hold true for some $j\in \overline{0, N_{1}-1}$ then (under the conditions of the lemma) they are also fulfilled for $j+1$ with
$$\nu_{j+1} = \tilde{u}^{\prime}_{\nu_{j}, \mu_{j}, j}(x_{j+1}),\; \mu_{j+1} = \tilde{u}_{\nu_{j}, \mu_{j}, j}(x_{j+1})$$
As it can be easily seen, inequality \eqref{auxiliary_problem_inequality_for_initial_derivatives} implies conditions \eqref{auxiliary_problem_assumption_for_mu}, \eqref{auxiliary_problem_assumption_for_nu} for $j=0$ with
$$\nu_{0} = \nu,\; \mu_{0} = 0,$$
$$\bar{\nu}_{0} = \bar{\nu},\; \bar{\mu}_{0} = 0$$
and the lemma's statement obviously follows from what was proved above and the principle of mathematical induction.
\end{proof}

\begin{lemma}\label{lemma_about_continuity_of_u_with_respect_to_nu}
Let the conditions of Lemma \ref{lemma_about_monotonicity_of_u_nu} hold true. Then $\tilde{u}_{\nu}(x)$ and $\tilde{u}^{\prime}_{\nu}(x),$ as functions of parameter $\nu,$ are continuous on $[0, +\infty),$ $\forall x\in [a,c].$
\end{lemma}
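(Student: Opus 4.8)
The plan is to exploit the fact that, although the right-hand side of \eqref{discretized_equation_straight} is \emph{not} continuous as a function of $(\tilde{u}, \tilde{u}^{\prime}, x)$ over the whole interval $[a,c]$ (the projections $\mathbb{P}_{x}$ freeze the arguments at the left node of each subinterval and thus introduce jumps at the points $x_{i}$), on each individual subinterval $[x_{i}, x_{i+1}]$ the equation degenerates into a genuine linear ODE to which the classical theorem on continuous dependence applies. First I would observe that, by \eqref{discretized_equation_straight_term_definition}, on $[x_{i}, x_{i+1}]$ equation \eqref{discretized_equation_straight} reads
\[
\tilde{u}^{\prime\prime}(x) = \big(A_{i}(x - x_{i}) + B_{i}\big)\,\tilde{u}(x),
\]
where $A_{i} = N^{\prime}_{u}(\mu_{i}, x_{i})\nu_{i} + N^{\prime}_{x}(\mu_{i}, x_{i})$ and $B_{i} = N(\mu_{i}, x_{i})$ are determined solely by the entry data $\mu_{i} = \tilde{u}(x_{i})$, $\nu_{i} = \tilde{u}^{\prime}(x_{i})$. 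Since $N \in C^{1}$, the coefficients $A_{i}, B_{i}$ are continuous functions of $(\mu_{i}, \nu_{i})$.

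Next, rewriting this as a first-order linear system for $(\tilde{u}, \tilde{u}^{\prime})$, its right-hand side is jointly continuous in $x$, in the state, and in the parameters $(A_{i}, B_{i})$, and is linear (hence locally Lipschitz, uniformly so on compact parameter sets) in the state. The standard theorem on continuous dependence of solutions on parameters and initial data (see, e.g., \cite{Kelley_Peterson_2010}) therefore guarantees that the flow map
\[
\Phi_{i} \colon (\mu_{i}, \nu_{i}; x) \longmapsto \big(\tilde{u}(x), \tilde{u}^{\prime}(x)\big), \quad x \in [x_{i}, x_{i+1}],
\]
is continuous; in particular so is the transfer map $T_{i} = \Phi_{i}(\cdot\,; x_{i+1})$ sending the entry data $(\mu_{i}, \nu_{i})$ to the exit data $(\mu_{i+1}, \nu_{i+1})$. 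Because $(A_{i}, B_{i})$ depend continuously on $(\mu_{i}, \nu_{i})$, this continuity holds with respect to the entry data themselves. Existence and uniqueness of $\tilde{u}_{\nu}$ on all of $[a,c]$ follow simultaneously, since a linear ODE has a unique global solution on each subinterval and these glue together in $C^{1}$ by construction.

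I would then close the argument by induction on the subinterval index, composing the transfer maps. The entry data at the first node is $(\mu_{0}, \nu_{0}) = (0, \nu)$, which depends continuously (indeed affinely) on $\nu$ in accordance with \eqref{auxiliary_problem_init_conditions}. Hence the node values $(\tilde{u}_{\nu}(x_{j}), \tilde{u}^{\prime}_{\nu}(x_{j})) = (T_{j-1}\circ \cdots \circ T_{0})(0, \nu)$ are continuous in $\nu$ as finite compositions of continuous maps. Finally, for an arbitrary $x \in [x_{j}, x_{j+1}]$ I would write $(\tilde{u}_{\nu}(x), \tilde{u}^{\prime}_{\nu}(x)) = \Phi_{j}(\tilde{u}_{\nu}(x_{j}), \tilde{u}^{\prime}_{\nu}(x_{j}); x)$ and invoke the continuity of $\Phi_{j}$ together with the just-established continuity of $\nu \mapsto (\tilde{u}_{\nu}(x_{j}), \tilde{u}^{\prime}_{\nu}(x_{j}))$; the composition is continuous in $\nu$, which is precisely the claim for both $\tilde{u}_{\nu}(x)$ and $\tilde{u}^{\prime}_{\nu}(x)$.

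The main obstacle I anticipate is exactly the discontinuity of the global right-hand side caused by the projection operators $\mathbb{P}_{x}$: one cannot apply a single continuous-dependence theorem on $[a,c]$ all at once, and one must verify that the piecewise-frozen coefficients $(A_{i}, B_{i})$ genuinely vary continuously with the (continuously varying) entry data, so that no discontinuity is introduced when passing from one subinterval to the next. Once the localization to subintervals and the continuity of the coefficient map $(\mu_{i},\nu_{i})\mapsto (A_{i},B_{i})$ are in place, the remainder is a routine finite composition argument; the restriction $\nu \in [0,+\infty)$ together with $\mu_{i} \ge 0$ (guaranteed via Lemma \ref{lemma_about_monotonicity_of_u_nu}) serves only to keep all arguments in the region where $N$ and its derivatives are controlled and is not otherwise essential to the continuity conclusion.
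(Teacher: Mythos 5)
Your proposal is correct and rests on the same foundation as the paper's own proof, which simply invokes the classical theorem on continuous dependence of IVP solutions on initial conditions and parameters (citing \cite[Theorem 8.40, p.~372]{Kelley_Peterson_2010}) and declares the result to follow "almost immediately." Your version supplies exactly the detail the paper leaves implicit—the localization to the subintervals $[x_{i}, x_{i+1}]$, where the frozen coefficients make the equation a genuine linear ODE, followed by the finite composition of continuous transfer maps across the nodes—so it is a more careful rendering of the same argument rather than a different route.
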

\begin{proof}
The statement of the lemma almost immediately follows from the corresponding theorem about continuity of solutions of IVPs with respect to initial conditions and parameters (see, for example, \cite[Theorem 8.40, p 372]{Kelley_Peterson_2010}).
\end{proof}

\begin{lemma}\label{lemma_about_a_point_nu_Ast}
  Let the conditions of Lemma \ref{lemma_about_monotonicity_of_u_nu} hold true. Then there exists a unique value $\nu^{\ast} > 0$ such that
  \begin{equation}\label{property_of_nu_ast}
    \tilde{u}_{\nu^{\ast}}(c) = u_{b}.
  \end{equation}
\end{lemma}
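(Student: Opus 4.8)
The plan is to reduce everything to the single scalar function $F(\nu) := \tilde{u}_{\nu}(c)$ of the shooting slope $\nu$ and to locate $\nu^{\ast}$ as the unique root of the equation $F(\nu) = u_{b}$ via the intermediate value theorem. Two of the three ingredients are already available: Lemma \ref{lemma_about_continuity_of_u_with_respect_to_nu} shows that $F$ is continuous on $[0,+\infty)$, and Lemma \ref{lemma_about_monotonicity_of_u_nu}, evaluated at $x=c$, shows that $\nu \mapsto F(\nu)$ is strictly increasing. Hence it remains only to control the range of $F$, namely to verify that $F(0) < u_{b}$ and that $F(\nu) \to +\infty$ as $\nu \to +\infty$; after that, continuity delivers a root and strict monotonicity makes it unique.

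First I would compute the left end. For $\nu = 0$ the initial data \eqref{auxiliary_problem_init_conditions} at the left endpoint $x_{0}=a$ read $\tilde{u}_{0}=0$, $\tilde{u}_{0}^{\prime}=0$, while on each subinterval $[x_{i}, x_{i+1}]$ equation \eqref{discretized_equation_straight} is a linear homogeneous second-order ODE with a coefficient $\alpha$ that is merely affine in $x$ (see \eqref{discretized_equation_straight_term_definition}). By the uniqueness part of the Picard–Lindelöf theorem the only such solution is $\tilde{u}_{0} \equiv 0$, so $F(0)=0$, and since $u_{b}>0$ by \eqref{Intro_boundary_conditions} we get the strict inequality $F(0) < u_{b}$.

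Next comes the growth bound, which I expect to be the one step requiring genuine care. For $\nu > 0$, comparison with the trivial solution through Lemma \ref{lemma_about_monotonicity_of_u_nu} (take $\bar{\nu}=0$) gives $\tilde{u}_{\nu}(x) > 0$ and $\tilde{u}_{\nu}^{\prime}(x) > 0$ on $(a,c]$. Inserting these signs, together with the hypotheses \eqref{lema_1_positiveness_of_nonlinearity} and \eqref{positiveness_of_N_derivative_with_respect_to_x}, into the definition \eqref{discretized_equation_straight_term_definition} shows that the coefficient $\alpha(\mathbb{P}_{x}[\tilde{u}_{\nu}^{\prime}], \mathbb{P}_{x}[\tilde{u}_{\nu}], x)$ is nonnegative, whence $\tilde{u}_{\nu}^{\prime\prime} = \alpha\,\tilde{u}_{\nu} \geq 0$. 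Consequently $\tilde{u}_{\nu}^{\prime}$ is nondecreasing, so $\tilde{u}_{\nu}^{\prime}(x) \geq \tilde{u}_{\nu}^{\prime}(a) = \nu$ on $[a,c]$, and integrating from $a$ to $c$ yields the linear lower bound
\[
  F(\nu) = \tilde{u}_{\nu}(c) = \int_{a}^{c}\tilde{u}_{\nu}^{\prime}(s)\,ds \geq \nu\,(c-a).
\]
In particular $F(\nu)\to +\infty$, so there is some $\nu_{1}$ with $F(\nu_{1}) > u_{b}$.

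Finally I would assemble the pieces. Since $F$ is continuous on $[0,\nu_{1}]$ with $F(0) < u_{b} < F(\nu_{1})$, the intermediate value theorem produces $\nu^{\ast}\in(0,\nu_{1})$ with $F(\nu^{\ast}) = u_{b}$, while the strict monotonicity supplied by Lemma \ref{lemma_about_monotonicity_of_u_nu} guarantees that no other value of $\nu$ can satisfy this, giving uniqueness. The main obstacle is precisely the growth estimate: it depends on first extracting the correct signs of $\tilde{u}_{\nu}$ and $\tilde{u}_{\nu}^{\prime}$ by comparison with the zero solution, so that the sign hypotheses on $N$ force $\alpha\geq 0$ and hence convexity of $\tilde{u}_{\nu}$; the remainder is a routine intermediate-value argument.
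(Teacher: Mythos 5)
Your proposal is correct and follows essentially the same route as the paper: both establish $\tilde{u}_{0}(c)=0$, derive the linear lower bound $\tilde{u}_{\nu}(c)\geq\nu(c-a)$ from the sign conditions on $N$, and then combine the continuity of Lemma \ref{lemma_about_continuity_of_u_with_respect_to_nu} with Bolzano's theorem for existence and the monotonicity of Lemma \ref{lemma_about_monotonicity_of_u_nu} for uniqueness. The only cosmetic difference is that the paper obtains the lower bound by citing the maximum principle directly, whereas you derive it by an explicit convexity argument (comparison with the zero solution, $\alpha\geq 0$, hence $\tilde{u}_{\nu}^{\prime}\geq\nu$), which amounts to the same estimate.
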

\begin{proof}
From conditions \eqref{lema_1_positiveness_of_nonlinearity}, \eqref{positiveness_of_N_derivative_with_respect_to_x} and the maximum principle it follows that
$$\tilde{u}_{\nu}(x) > \nu (x-a),\; \forall x\in [a,c], \; \forall \nu > 0.$$
The latter yields us the inequality
$$\tilde{u}_{\nu}(c) > u_{b}$$
provided that
$$\nu \geq \frac{u_{b}}{c-a},$$ which, in conjunction with the obvious equality $$\tilde{u}_{0}(c) = 0,$$ Lemma \ref{lemma_about_continuity_of_u_with_respect_to_nu} and the {\it Bolzano's theorem,} provides us the existence of
$\nu^{\ast}$ mentioned in the Lemma. The uniqueness follows from the monotonicity properties of $\tilde{u}_{\nu}(x)$ as a function of parameter $\nu$ (Lemma \ref{lemma_about_monotonicity_of_u_nu}).
\end{proof}

\begin{lemma}\label{lemma_x_tend_to_infty_when_nu_tend_to_zero}
Let the conditions of Lemma \ref{lemma_about_monotonicity_of_u_nu} hold true and let $\tilde{x}_{\nu}(u)\in C^{1}([\tilde{u}_{\nu}(c), u_{b}])$ denote the solution to equation \eqref{discretized_equation_inverse}, \eqref{discretized_equation_inverse_term_definition} subjected to initial conditions
\begin{equation}\label{lemma_x_tend_to_infty_matching_conditions_with_nu}
  \tilde{x}_{\nu}(\tilde{u}_{\nu}(c)) = c,\; \tilde{x}^{\prime}_{\nu}(\tilde{u}_{\nu}(c)) = \frac{1}{\tilde{u}^{\prime}_{\nu}(c)},\; \nu\in [0, \nu^{\ast}]
\end{equation}
where $\nu^{\ast}$ was introduced in Lemma \ref{lemma_about_a_point_nu_Ast}.
Then $\phi(\nu) = \tilde{x}_{\nu}(u_{b})$ is a continuous function of $\nu\in (0, \nu^{\ast})$ and
\begin{equation}\label{limit_of_x_when_nu_tends_to_nu_ast}
  \lim\limits_{\nu \uparrow \nu^{\ast}} \phi(\nu) = c.
\end{equation}
Additionally to that, there exists $\nu_{\ast} \in (0, \nu^{\ast}),$ such that
\begin{equation}\label{limit_of_x_when_nu_tends_to_infty}
  \phi(\nu_{\ast}) > b.
\end{equation}
\end{lemma}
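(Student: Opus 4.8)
The plan is to establish first the sign and monotonicity structure of both families and then treat the three assertions separately. Under \eqref{lema_1_positiveness_of_nonlinearity} and \eqref{positiveness_of_N_derivative_with_respect_to_x} one checks directly that $\mathcal{N},\mathcal{N}'_{u},\mathcal{N}'_{x}\geq 0$ on the relevant ranges, whence $\beta\leq 0$ in \eqref{discretized_equation_inverse_term_definition}; since $\tilde{x}_{\nu}'>0$ (Lemma \ref{lemma_about_inverse_to_x_tilde}), equation \eqref{discretized_equation_inverse} gives $\tilde{x}_{\nu}''=\beta(\tilde{x}_{\nu}')^{3}\leq 0$, so $\tilde{x}_{\nu}'$ is non-increasing and, by \eqref{matching_conditions_u}, bounded above by its initial value $\tilde{x}_{\nu}'(\tilde{u}_{\nu}(c))=1/\tilde{u}_{\nu}'(c)$. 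Dually, Lemma \ref{lemma_about_monotonicity_of_u_nu} with $\bar{\nu}=0$ gives $\tilde{u}_{\nu}>0$, $\tilde{u}_{\nu}'>0$ on $(a,c]$, while $\tilde{u}_{\nu}''=\alpha\,\tilde{u}_{\nu}\geq 0$ makes $\tilde{u}_{\nu}$ convex; convexity yields the bound $\tilde{u}_{\nu}(c)=\int_{a}^{c}\tilde{u}_{\nu}'(t)\,dt\leq (c-a)\tilde{u}_{\nu}'(c)$, which will be the crucial estimate below.

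For continuity of $\phi$ on $(0,\nu^\ast)$ I would invoke continuous dependence on data and parameters, as in Lemma \ref{lemma_about_continuity_of_u_with_respect_to_nu}. That lemma makes $\tilde{u}_{\nu}(c)$ and $\tilde{u}_{\nu}'(c)$ (the latter staying positive, so $1/\tilde{u}_{\nu}'(c)$ is continuous) depend continuously on $\nu$, hence the Cauchy data \eqref{lemma_x_tend_to_infty_matching_conditions_with_nu} for $\tilde{x}_{\nu}$ vary continuously. Writing $\tilde{x}_{\nu}(u_{b})$ as the composition of the finitely many subinterval flow maps of \eqref{discretized_equation_inverse}, each continuous in its entry data and frozen coefficients, gives continuity of $\phi$; the only delicate point is that the breakpoints $\bar{u}_{i}=\bar{u}_{i}(\tilde{u}_{\nu}(c))$ move with $\nu$ and may merge as $\tilde{u}_{\nu}(c)$ crosses a grid node, but a merge corresponds to a degenerate zero-length interval on which the flow is the identity, so the composition stays continuous.

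The limit \eqref{limit_of_x_when_nu_tends_to_nu_ast} then follows by a squeeze. Using $\tilde{x}_{\nu}'\leq 1/\tilde{u}_{\nu}'(c)$ and $\tilde{x}_{\nu}(\tilde{u}_{\nu}(c))=c$,
\[
c<\phi(\nu)=c+\int_{\tilde{u}_{\nu}(c)}^{u_{b}}\tilde{x}_{\nu}'(u)\,du\leq c+\frac{u_{b}-\tilde{u}_{\nu}(c)}{\tilde{u}_{\nu}'(c)}.
\]
As $\nu\uparrow\nu^\ast$ we have $\tilde{u}_{\nu}(c)\to\tilde{u}_{\nu^\ast}(c)=u_{b}$ (Lemma \ref{lemma_about_a_point_nu_Ast}) while $\tilde{u}_{\nu}'(c)\to\tilde{u}_{\nu^\ast}'(c)>0$, so the right-hand side tends to $c$ and \eqref{limit_of_x_when_nu_tends_to_nu_ast} holds.

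The main obstacle is \eqref{limit_of_x_when_nu_tends_to_infty}. Here I would pass to $y(u)=(\tilde{x}_{\nu}'(u))^{-2}$, which on each subinterval satisfies the linear equation $y'=-2\beta\geq 0$ and is therefore an explicit quadratic in $u$, with $y(\tilde{u}_{\nu}(c))=(\tilde{u}_{\nu}'(c))^{2}$. On the first subinterval $[s,u_{1}]$, $s=\tilde{u}_{\nu}(c)$, the frozen data are $x=c$, $x'=1/\tilde{u}_{\nu}'(c)$; using $\mathcal{N}'_{x}(s,c)=N'_{x}(s,c)\,s$ together with the convexity bound $s/\tilde{u}_{\nu}'(c)=\tilde{u}_{\nu}(c)/\tilde{u}_{\nu}'(c)\leq c-a$ one bounds the quadratic coefficient uniformly, $0\leq -\beta(\xi)\leq A^{\ast}(\xi-s)+N^{\ast}s$, where $A^{\ast},N^{\ast}$ are $\nu$-independent maxima of the data. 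Integrating gives $y(u)\leq\kappa+A^{\ast}(u-s)^{2}$ with $\kappa=(\tilde{u}_{\nu}'(c))^{2}+2N^{\ast}s\,u_{1}\to 0$ as $\nu\downarrow 0$ (then $\tilde{u}_{\nu}'(c)\to\tilde{u}_{0}'(c)=0$ and $s\to 0$), so
\[
\phi(\nu)-c\geq\int_{s}^{u_{1}}\frac{du}{\sqrt{y(u)}}\geq\int_{s}^{u_{1}}\frac{du}{\sqrt{\kappa+A^{\ast}(u-s)^{2}}}=\frac{1}{\sqrt{A^{\ast}}}\,\operatorname{arcsinh}\!\left(\sqrt{\tfrac{A^{\ast}}{\kappa}}\,(u_{1}-s)\right)\xrightarrow[\nu\downarrow 0]{}+\infty.
\]
Hence $\phi(\nu)>b$ for all sufficiently small $\nu$, and any such $\nu\in(0,\nu^\ast)$ serves as $\nu_{\ast}$. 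The points requiring care are exactly the uniform bound $A^{\ast}$ (where the convexity estimate $\tilde{u}_{\nu}(c)\leq(c-a)\tilde{u}_{\nu}'(c)$ tames the otherwise diverging factor $1/\tilde{u}_{\nu}'(c)$) and the vanishing of $\kappa$, which together force the logarithmic blow-up of the integral.
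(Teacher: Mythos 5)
Your proof is correct, and its three-part decomposition --- continuity of $\phi$ by composing the per-subinterval flow maps, the limit \eqref{limit_of_x_when_nu_tends_to_nu_ast} from a bounded integrand over a shrinking interval, and \eqref{limit_of_x_when_nu_tends_to_infty} from a lower bound on $\phi(\nu)-c$ that blows up as $\nu\downarrow 0$ --- is the same as the paper's; the differences lie in how the last two parts are executed. For \eqref{limit_of_x_when_nu_tends_to_nu_ast} you exploit $\beta\le 0$ to get $\tilde{x}_{\nu}'\le 1/\tilde{u}_{\nu}'(c)$ and squeeze, while the paper reduces (without loss of generality) to the case where $\tilde{u}_{\nu}(c)$ lies in the last subinterval and computes the remaining integral explicitly; your version is a bit cleaner, and both rest on the same two facts ($\tilde{u}_{\nu}(c)\to u_{b}$ and $\tilde{u}_{\nu}'(c)$ staying bounded away from zero). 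For \eqref{limit_of_x_when_nu_tends_to_infty} both arguments hinge on the identical taming estimate: your convexity bound $\tilde{u}_{\nu}(c)/\tilde{u}_{\nu}'(c)\le c-a$ is exactly the $i=0$ instance of the inequality $\bar{u}_{i}\tilde{x}_{\nu}'(\bar{u}_{i})\le b-a$ that the paper derives in a footnote from concavity of $\tilde{x}_{\nu}$ and uses to define its constant $M$. From there you diverge: you keep only the first subinterval, pass to $y=(\tilde{x}_{\nu}')^{-2}$ with $y'=-2\beta$, and obtain an $\operatorname{arcsinh}$ lower bound that diverges because $\kappa=(\tilde{u}_{\nu}'(c))^{2}+2N^{\ast}s\,u_{1}\to 0$; the paper instead integrates its quadratic majorant over all of $[\tilde{u}_{\nu}(c),u_{b}]$, completes the square, and extracts a logarithmic bound. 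Your route is noticeably shorter and dispenses with the paper's chain of constants $T,M,Q,R$; what the paper's heavier computation buys is an explicit, $\nu$-independent threshold $\tilde{u}^{\prime}_{\ast}$ in \eqref{estimate_from_above_for_u_peimw_c} such that $\phi(\nu)>b$ whenever $\tilde{u}_{\nu}'(c)<\tilde{u}^{\prime}_{\ast}$ --- a quantitative by-product recorded in Remark \ref{remark_about_u_tilde_prime_c_bounded from below} and reused repeatedly in the error analysis of Section \ref{section_error_analysis} (e.g.\ in \eqref{restriction_on_tilde_u_prime_from_below}, \eqref{inequality_x_tilde_minus_x_hat_case_b} and \eqref{estimate_for_u_hat_prime}). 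Your asymptotic argument fully proves the lemma as stated, but it would have to be made quantitative in this way before it could replace the paper's proof in the larger scheme.
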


\begin{proof}
  We start by proving that the function $\phi(\nu)$ is continuous on $(0, \nu^{\ast}).$

  It is easy to see that on each interval $[\bar{u}_{i}, \bar{u}_{i+1}],$ $i \in \overline{0, N_{2}-1}$ function $\tilde{x}_{\nu}(u)$ can be expressed in a recursive way
  \begin{equation}\label{expression_for_tilde_x_on_suninterval}
    \tilde{x}_{\nu}(u) = \tilde{x}_{\nu, i}(u) = \int\limits_{\bar{u}_{i}}^{u}\frac{\tilde{x}_{\nu, i-1}(\bar{u}_{i})d\eta}{\sqrt{1-2\left(\tilde{x}_{\nu, i-1}(\bar{u}_{i})\right)^{2}\int\limits_{\bar{u}_{i}}^{\eta}\beta_{i}\left(\tilde{x}^{\prime}_{\nu, i-1}(\bar{u}_{i}), \tilde{x}_{\nu, i-1}(\bar{u}_{i}), \xi\right)d\xi}} + \tilde{x}_{\nu, i-1}(\bar{u}_{i}),
  \end{equation}
  where
  \begin{equation}\label{value_for_bar_u_0}
    \bar{u}_{0} = \tilde{u}_{\nu}(c),\; \tilde{x}_{\nu, -1}(\bar{u}_{0}) = c,\; \tilde{x}^{\prime}_{\nu, -1}(\bar{u}_{0}) = \frac{1}{\tilde{u}_{\nu}^{\prime}(c)}.
  \end{equation}

  According to the definition of $\bar{u}_{i}$ given in \eqref{definition_of_u_bar}, some intervals $[\bar{u}_{i}, \bar{u}_{i+1}]$ have  zero measure, containing a single point $\tilde{u}_{\nu}(c).$ This, however, does not affect the correctness of the reasoning below.

  From \eqref{expression_for_tilde_x_on_suninterval} it follows that
  \begin{eqnarray}\label{recursive_formulas_for_x_tilde_nu}
    \tilde{x}_{\nu, i}(\bar{u}_{i+1}) &=& \phi_{i}(\tilde{x}^{\prime}_{\nu, i-1}(\bar{u}_{i}), \tilde{x}_{\nu, i-1}(\bar{u}_{i}), \bar{u}_{i}), \nonumber \\
    \tilde{x}^{\prime}_{\nu, i}(\bar{u}_{i+1}) &=& \psi_{i}(\tilde{x}^{\prime}_{\nu, i-1}(\bar{u}_{i}), \tilde{x}_{\nu, i-1}(\bar{u}_{i}), \bar{u}_{i}), \\
  \end{eqnarray}
where $\bar{u}_{i} = \bar{u}_{i}(\tilde{u}_{\nu}(c))$ (see \eqref{definition_of_u_bar}),
\begin{equation}\label{function_phi_i}
  \phi_{i}(x^{\prime}, x, u) = \int\limits_{u}^{\bar{u}_{i+1}}\frac{x^{\prime}d\eta}{\sqrt{1-2\left(x^{\prime}\right)^{2}\int\limits_{u}^{\eta}\beta_{i}\left(x^{\prime}, x, \xi\right)d\xi}} + x,
\end{equation}

\begin{equation}\label{function_psi_i}
  \psi_{i}(x^{\prime}, x, u) = \frac{x^{\prime}}{\sqrt{1-2\left(x^{\prime}\right)^{2}\int\limits_{u}^{\bar{u}_{i+1}}\beta_{i}\left(x^{\prime}, x, \xi\right)d\xi}},
\end{equation}
$$u \in [0, \bar{u}_{i+1}],\; x \in [a, b],\; x^{\prime} \in \mathbb{R}, \;i\in \overline{0, N_{2}-1}.$$

It is easy to see that (under conditions \eqref{lema_1_positiveness_of_nonlinearity}, \eqref{positiveness_of_N_derivative_with_respect_to_x}) functions \eqref{function_phi_i}, \eqref{function_psi_i} are continuous on their domains, which, in conjunction with the recursive formulas \eqref{recursive_formulas_for_x_tilde_nu} and initial conditions \eqref{value_for_bar_u_0}, implies that $\tilde{x}_{\nu}(u_{b})$ is continuously dependent on $\tilde{u}_{\nu}^{\prime}(c),$ $\tilde{u}_{\nu}(c).$ On the other hand, according to Lemma \ref{lemma_about_continuity_of_u_with_respect_to_nu} the latter two quantities are continuous functions of the parameter $\nu,$ which completes the first part of the proof.

To prove equality \eqref{limit_of_x_when_nu_tends_to_nu_ast} we can, without loss of generality, to assume that $\nu < \nu^{\ast}$ is so close to $\nu^{\ast}$ that
$$\bar{u}_{N_{2}-1} \leq \tilde{u}_{\nu}(c) < \bar{u}_{N_{2}} = u_{b}.$$ This allows us to reduce the limit in the left hand side of \eqref{limit_of_x_when_nu_tends_to_nu_ast} to the following form
$$
\lim\limits_{\nu \uparrow \nu^{\ast}}\tilde{x}_{\nu}(u_{b}) = c + \lim\limits_{\nu \uparrow \nu^{\ast}}\int\limits_{\tilde{u}_{\nu}(c)}^{u_{b}}\frac{d\eta}{\sqrt{\left(\tilde{u}_{\nu}^{\prime}(c)\right)^{2} -2\int\limits_{\tilde{u}_{\nu}(c)}^{\eta}\beta_{N_{2}-1}(1/\tilde{u}_{\nu}^{\prime}(c), c, \tilde{u}_{\nu}(c))d\xi}}.
$$
Since $\tilde{u}_{\nu}(c)$ tends to $u_{b}$ as $\nu$ tends to $\nu^{\ast}$ (see Lemma \ref{lemma_about_a_point_nu_Ast}), the limit in the right hand side of the equality above is equal to $0$, which proofs the target equality \eqref{limit_of_x_when_nu_tends_to_nu_ast}.

Finally, we focus on proving the existence of $\nu_{\ast}$ mentioned in the lemma. To do so, let us estimate $\tilde{x}_{\nu}(u)$ from below:
$$\tilde{x}_{\nu}(u) - c = \int\limits_{\tilde{u}_{\nu}(c)}^{u}\left[\left(\tilde{u}^{\prime}_{\nu}(c)\right)^{2} - 2\int\limits_{\tilde{u}_{\nu}(c)}^{\eta}\beta\left(\mathbb{P}_{u}(\tilde{x}^{\prime}_{\eta}(\xi)), \mathbb{P}_{u}(\tilde{x}_{\nu}(\xi)), \xi\right)d\xi\right]^{-\frac{1}{2}}d\eta $$
$$\geq\int\limits_{\tilde{u}_{\nu}(c)}^{u}\left[\left(\tilde{u}^{\prime}_{\nu}(c)\right)^{2} + 2\int\limits_{\tilde{u}_{\nu}(c)}^{\eta}(T\xi + M(\xi-\tilde{u}_{\nu}(c)))d\xi\right]^{-\frac{1}{2}}d\eta $$
$$ =\int\limits_{\tilde{u}_{\nu}(c)}^{u}\left[\left(\tilde{u}^{\prime}_{\nu}(c)\right)^{2} + T\left(\eta + \tilde{u}_{\nu}(c)\right)\left(\eta - \tilde{u}_{\nu}(c)\right) + M\left(\eta-\tilde{u}_{\nu}(c)\right)^{2}\right]^{-\frac{1}{2}}d\eta $$
$$\geq \int\limits_{\tilde{u}_{\nu}(c)}^{u}\left[\left(\tilde{u}^{\prime}_{\nu}(c)\right)^{2} + \left(\frac{T}{2\sqrt{M}}\left(\eta + \tilde{u}_{\nu}(c)\right) + \sqrt{M}\left(\eta - \tilde{u}_{\nu}(c)\right)\right)^{2}\right]^{-\frac{1}{2}}d\eta$$
$$ = \left.\frac{1}{Q}\ln\left(Q\eta + R\tilde{u}_{\nu}(c) + \sqrt{\left(\tilde{u}^{\prime}_{\nu}(c)\right)^{2} + \left(Q\eta + R\tilde{u}_{\nu}(c)\right)^{2}}\right)\right|_{\eta = \tilde{u}_{\nu}(c)}^{\eta =u}$$
$$ = \frac{1}{Q}\ln\left(\frac{Qu + R\tilde{u}_{\nu}(c) + \sqrt{\left(\tilde{u}^{\prime}_{\nu}(c)\right)^{2} + \left(Qu + R\tilde{u}_{\nu}(c)\right)^{2}}}{\frac{T}{\sqrt{M}}\tilde{u}_{\nu}(c) + \sqrt{(\tilde{u}_{\nu}^{\prime}(c))^{2} + \left(\frac{T}{\sqrt{M}}\tilde{u}_{\nu}(c)\right)^{2}}}\right)$$
$$\geq \frac{1}{Q}\ln\left(\frac{\frac{T}{\sqrt{M}}u}{\frac{T}{\sqrt{M}}\tilde{u}_{\nu}(c) + \sqrt{(\tilde{u}_{\nu}^{\prime}(c))^{2} + \left(\frac{T}{\sqrt{M}}\tilde{u}_{\nu}(c)\right)^{2}}}\right) $$
\begin{equation}\label{inequality_to_estimate_u_prime_c_from_below}
  \geq\frac{1}{Q}\ln\left(\frac{\frac{T}{\sqrt{M}}u_{b}}{\tilde{u}_{\nu}^{\prime}(c)\left(\frac{T}{\sqrt{M}}(c-a) + \sqrt{1 + \left(\frac{T}{\sqrt{M}}(c-a)\right)^{2}}\right)}\right),
\end{equation}

where
$$T = \max\left\{N(u,x)\;|\; x\in [c, b],\; u\in [0, u_{b}]\right\}, $$
$$M = \max\left\{1, \max\left\{\mathcal{N}^{\prime}_{u}(u, x) + N^{\prime}_{x}(u, x)(b-a)\;|\; x\in [c, b],\; u\in [0, u_{b}]\right\}\right\}\footnote{Here we used the inequality
$$b-a \geq \tilde{x}_{\nu}(\bar{u}_{i}) - c + c - a \geq (\bar{u}_{i} - \bar{u}_{0})\tilde{x}_{\nu}^{\prime}(\bar{u}_{i}) + (\bar{u}_{0} - 0)\tilde{x}^{\prime}_{\nu}(\bar{u}_{0}) \geq \bar{u}_{i}\tilde{x}_{\nu}^{\prime}(\bar{u}_{i}),$$
that follows from the {\it mean value theorem} and the concavity of function $\tilde{x}_{\nu}(u).$}$$
$$Q = \frac{T+2M}{2\sqrt{M}},\; R = \frac{T-2M}{2\sqrt{M}}.$$

From estimate \eqref{inequality_to_estimate_u_prime_c_from_below} we get that
$$\tilde{x}_{\nu}(u_{b}) > b$$
provided that
\begin{equation}\label{estimate_from_above_for_u_peimw_c}
  0 \leq \tilde{u}_{\nu}^{\prime}(c) < \tilde{u}^{\prime}_{\ast} \stackrel{def}{=} \frac{ \exp\left(Q(c-b)\right) \frac{T}{\sqrt{M}}u_{b}}{\left(\frac{T}{\sqrt{M}}(c-a) + \sqrt{1 + \left(\frac{T}{\sqrt{M}}(c-a)\right)^{2}}\right)}.
\end{equation}
According to Lemma \ref{lemma_about_continuity_of_u_with_respect_to_nu}, the latter inequality is satisfied for sufficiently small values of parameter $\nu > 0.$ This yields the existence of $\nu_{\ast}$ mentioned in the statement of the lemma.
\end{proof}

\begin{proof}[Proof of Theorem \ref{Existence_theorem}]
From Lemma \ref{lemma_x_tend_to_infty_when_nu_tend_to_zero} it follows that  function
$$f(\nu) = \tilde{x}_{\nu}(u_{b}) - b,$$
is continuous on $[\nu_{\ast}, \nu^{\ast}]$ and takes different signs in the endpoints of the interval. The latter, according to the {\it Bolzano's intermediate value theorem}, implies the existence of $\check{\nu} \in (\nu_{\ast}, \nu^{\ast}),$ such that $f(\check{\nu}) = 0.$ In the way described above, $\check{\nu}$ (which might not be unique) uniquely defines a pair of functions $\tilde{u}(x) = \tilde{u}(\check{\nu}, x),$ $\tilde{x}(u) = \tilde{x}(\check{\nu}, u)$
mentioned in Theorem \ref{Existence_theorem}. Inequalities \eqref{boundaries_for_u_tilde} and \eqref{boundaries_for_x_tilde} almost obviously follow from the monotonicity of $\tilde{u}(\check{\nu}, x)$ with respect to $x$ (see Lemma \ref{lemma_about_monotonicity_of_u_nu}, when $\bar{\nu} = 0$) and the monotonicity of $\tilde{x}(\check{\nu}, u)$ with respect to $u$ (see Lemma \ref{lemma_about_inverse_to_x_tilde}) respectively.
\end{proof}

\begin{remark}\label{remark_about_u_tilde_prime_c_bounded from below}
In scope of Lemma \ref{lemma_x_tend_to_infty_when_nu_tend_to_zero} it was proved that if functions $\tilde{u}(x)$ and $\tilde{x}(u)$ satisfy conditions \eqref{discretized_equation_straight}---\eqref{matching_conditions_u}, then $\tilde{u}^{\prime}(c)$ is bounded from below (see inequality \eqref{estimate_from_above_for_u_peimw_c}) by a constant $\tilde{u}^{\prime}_{\ast},$ depending on the function $N(u,x)$ and parameters $a, b, c, u_{b}.$
\end{remark}

It is also not difficult to prove a similar estimate for $\tilde{u}^{\prime}(c)$ as stated in the lemma below.
\begin{lemma}\label{lemma_about_u_tilde_prime_c_bounded_from_above}
Let functions $\tilde{u}(x)$ and $\tilde{x}(u)$ satisfy conditions \eqref{discretized_equation_straight} -- \eqref{matching_conditions_u}, then $\tilde{u}^{\prime}(c)$ is bounded from above
\begin{equation}\label{u_tilde_prime_c_bounded_from_above}
  \tilde{u}^{\prime}(c) \leq \tilde{u}^{\prime\ast} \stackrel{def}{=} \frac{u_{b}}{b-c}.
\end{equation}
\end{lemma}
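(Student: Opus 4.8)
The plan is to convert the desired upper bound on $\tilde{u}^{\prime}(c)$ into a lower bound on the slope of the inverse component $\tilde{x}$ at its left endpoint, using the matching condition \eqref{matching_conditions_u}. Since $\tilde{x}^{\prime}(\tilde{u}(c)) = 1/\tilde{u}^{\prime}(c)$, the claimed estimate \eqref{u_tilde_prime_c_bounded_from_above} is equivalent to
\[
  \tilde{x}^{\prime}(\tilde{u}(c)) \geq \frac{b-c}{u_{b}},
\]
so the whole argument reduces to bounding $\tilde{x}^{\prime}$ at $u=\tilde{u}(c)$ from below by the average slope of $\tilde{x}$ over its interval of definition.

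First I would record the qualitative behaviour of $\tilde{x}$ on $[\tilde{u}(c), u_{b}]$. By Lemma \ref{lemma_about_inverse_to_x_tilde} the function $\tilde{x}$ is strictly monotone, and since $\tilde{x}^{\prime}(\tilde{u}(c)) = 1/\tilde{u}^{\prime}(c) > 0$ it is strictly increasing. Because $\tilde{x}$ solves \eqref{discretized_equation_inverse}, its second derivative equals $\beta(\cdot)\left(\tilde{x}^{\prime}\right)^{3}$, whose sign coincides with that of $\beta$. A direct inspection of the definition \eqref{discretized_equation_inverse_term_definition} shows that $\beta_{i} \leq 0$ under \eqref{lema_1_positiveness_of_nonlinearity} and \eqref{positiveness_of_N_derivative_with_respect_to_x} (for $u \geq \bar{u}_{i}$ and $\tilde{x}^{\prime} > 0$ the two nonnegative groups enter with a minus sign), exactly as in the proof of Theorem \ref{Existence_theorem}. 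Hence $\tilde{x}^{\prime\prime}(u) \leq 0$ and $\tilde{x}$ is concave, which is precisely the content of \eqref{boundaries_for_x_tilde}.

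Next I would exploit this concavity. For a concave function the derivative is non-increasing, so its maximum over $[\tilde{u}(c), u_{b}]$ is attained at the left endpoint, whence
\[
  b - c = \tilde{x}(u_{b}) - \tilde{x}(\tilde{u}(c)) = \int\limits_{\tilde{u}(c)}^{u_{b}} \tilde{x}^{\prime}(u)\,du \leq \left(u_{b} - \tilde{u}(c)\right)\tilde{x}^{\prime}(\tilde{u}(c)).
\]
Condition \eqref{c_inequality_for_straight_discretization} gives $\tilde{u}(c) > 0$, so $u_{b} - \tilde{u}(c) < u_{b}$ and therefore $b - c \leq u_{b}\,\tilde{x}^{\prime}(\tilde{u}(c))$. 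Substituting $\tilde{x}^{\prime}(\tilde{u}(c)) = 1/\tilde{u}^{\prime}(c)$ from \eqref{matching_conditions_u} and rearranging yields \eqref{u_tilde_prime_c_bounded_from_above}.

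The argument is short, and the only genuine point to secure is the concavity of $\tilde{x}$; everything afterwards is the mean value theorem (or a one-line integration) together with the strict positivity $\tilde{u}(c) > 0$. Accordingly, I expect the main—and rather mild—obstacle to be confirming that the sign hypotheses \eqref{lema_1_positiveness_of_nonlinearity} and \eqref{positiveness_of_N_derivative_with_respect_to_x} guaranteeing $\beta \leq 0$ are in force, since they are what upgrade the bare monotonicity of Lemma \ref{lemma_about_inverse_to_x_tilde} into the concavity on which the estimate rests.
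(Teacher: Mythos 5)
The paper never actually proves this lemma --- it is dispatched with the remark ``It is also not difficult to prove a similar estimate\ldots'' --- so there is no authors' argument to compare yours against; your proposal fills a genuine gap, and it does so correctly. Moreover, the route you take (using the matching condition \eqref{matching_conditions_u} to convert the claim into the lower bound $\tilde{x}^{\prime}(\tilde{u}(c)) \geq (b-c)/u_{b}$, then monotonicity, concavity of $\tilde{x}$, and $\tilde{u}(c)>0$ from \eqref{c_inequality_for_straight_discretization}) is exactly the technique the authors use elsewhere in the same section: the footnote inside the proof of Lemma \ref{lemma_x_tend_to_infty_when_nu_tend_to_zero} derives $b-a \geq \bar{u}_{i}\tilde{x}^{\prime}_{\nu}(\bar{u}_{i})$ from ``the mean value theorem and the concavity of function $\tilde{x}_{\nu}(u)$,'' which is your inequality in mirror image. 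Two details would tighten the writeup. First, you assert $\tilde{x}^{\prime}(\tilde{u}(c)) = 1/\tilde{u}^{\prime}(c) > 0$ without justification; either note that for $\tilde{u}^{\prime}(c)<0$ the claimed bound is trivial, or, better, observe that by \eqref{explicite_derivative_of_x} the derivative $\tilde{x}^{\prime}$ keeps a constant sign on $[\tilde{u}(c),u_{b}]$, so a negative sign would make $\tilde{x}$ decreasing and contradict $\tilde{x}(u_{b}) = b > c = \tilde{x}(\tilde{u}(c))$. Second, your closing caveat is well placed: as stated, the lemma assumes only conditions \eqref{discretized_equation_straight}--\eqref{matching_conditions_u}, while the concavity step ($\beta \leq 0$, hence $\tilde{x}^{\prime\prime}\leq 0$) additionally requires \eqref{lema_1_positiveness_of_nonlinearity}, \eqref{positiveness_of_N_derivative_with_respect_to_x}, together with $\bar{u}_{i} \geq \tilde{u}(c) > 0$ and $\tilde{x}(\bar{u}_{i}) \in [c,b]$ (the latter follows from monotonicity and the boundary/matching data, so there is no circularity). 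These sign hypotheses are standing assumptions in the context of Theorem \ref{Existence_theorem}, and without them the estimate can genuinely fail --- a convex $\tilde{x}$ can have arbitrarily small slope at its left endpoint --- so the lemma is implicitly relying on them, and saying so explicitly is the right call.
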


The existence of constant $\tilde{u}^{\prime}_{\ast}$ imposes a restriction from below on the value of $\tilde{u}^{\prime}(u)$ as it is stated by the following lemma.
\begin{lemma}
  Let functions $\tilde{u}(x)$ and $\tilde{x}(u)$ satisfy conditions \eqref{discretized_equation_straight} -- \eqref{matching_conditions_u} and inequalities \eqref{lema_1_positiveness_of_nonlinearity}, \eqref{positiveness_of_N_derivative_with_respect_to_x} hold true. Then
  \begin{equation}\label{restriction_on_tilde_u_prime_from_below}
    \tilde{u}^{\prime}_{\ast\ast} \stackrel{def}{=} \tilde{u}^{\prime}_{\ast}\exp\left(- \frac{1}{2}c^{2}\left(N^{\prime}_{u}(u_{b}, c)\tilde{u}^{\prime\ast} + \max\limits_{0\leq x \leq c}N^{\prime}_{x}(u_{b}, x)\right) - c N(u_{b}, c)\right) \leq \tilde{u}^{\prime}(u)
  \end{equation}
  where constants $\tilde{u}^{\prime}_{\ast}$ and $\tilde{u}^{\prime\ast}$ are defined in \eqref{estimate_from_above_for_u_peimw_c} and \eqref{u_tilde_prime_c_bounded_from_above} respectively.
\end{lemma}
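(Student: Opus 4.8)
The plan is to control the logarithmic derivative $\tilde u''/\tilde u'=(\ln\tilde u')'$ and integrate it from the point $c$, where a lower bound for $\tilde u'$ is already available, towards the left endpoint of the interval. By Theorem \ref{Existence_theorem} the solution $\tilde u$ is positive, strictly increasing and convex on $(0,c]$, so $\tilde u'$ is nondecreasing and its minimum over $[0,c]$ is attained at $x=0$; it is therefore enough to bound $\tilde u'(0)$ from below and then invoke monotonicity to get the estimate for every point. Since
$$\ln\tilde u'(c)-\ln\tilde u'(0)=\int_0^c\frac{\tilde u''(s)}{\tilde u'(s)}\,ds,$$
and $\tilde u'(c)\ge\tilde u'_\ast$ by \eqref{estimate_from_above_for_u_peimw_c} and the subsequent remark, the whole problem reduces to an explicit upper bound for the integral on the right.

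First I would use \eqref{discretized_equation_straight}, \eqref{discretized_equation_straight_term_definition} to write $\tilde u''=\alpha\bigl(\mathbb P_x[\tilde u'],\mathbb P_x[\tilde u],x\bigr)\tilde u$, so that $\tilde u''/\tilde u'=\alpha\cdot(\tilde u/\tilde u')$, and then bound $\alpha$ pointwise. This is where all four sign hypotheses in \eqref{positiveness_of_N_derivative_with_respect_to_x} are consumed: $N'_{uu}\ge0$ and $N'_{xu}\ge0$ make $N'_u$ nondecreasing in both of its arguments, $N'_{xu}\ge0$ makes $N'_x$ nondecreasing in $u$, while $N'_u,N'_x\ge0$ together with \eqref{lema_1_positiveness_of_nonlinearity} make $N$ nonnegative and nondecreasing in both arguments. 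Combining this monotonicity with $0<\tilde u(x_i)\le u_b$ and with $\tilde u'(x_i)\le\tilde u'^\ast$ (Lemma \ref{lemma_about_u_tilde_prime_c_bounded_from_above} and the monotonicity of $\tilde u'$), every frozen coefficient in \eqref{discretized_equation_straight_term_definition} may be replaced by its value at the extremal arguments $u=u_b$, $x=c$, giving on each subinterval $[x_i,x_{i+1}]$
$$\alpha\le\Bigl(N'_u(u_b,c)\,\tilde u'^\ast+\max_{0\le s\le c}N'_x(u_b,s)\Bigr)(x-x_i)+N(u_b,c)\le Ax+B,$$
where $A$ and $B$ denote precisely the two constants occurring in the statement and the last step uses $x_i\ge0$.

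The final ingredient is the factor $\tilde u/\tilde u'$, for which convexity and $\tilde u(0)=0$ give $\tilde u(x)=\int_0^x\tilde u'(s)\,ds\le x\,\tilde u'(x)$, i.e. $\tilde u/\tilde u'\le x\le c$. Since the paper's normalization keeps $c\le1$, this yields $\tilde u/\tilde u'\le1$, hence $\tilde u''/\tilde u'\le Ax+B$ and $\int_0^c(Ax+B)\,dx=\tfrac12c^2A+cB$, which is exactly the exponent defining $\tilde u'_{\ast\ast}$; together with $\tilde u'(c)\ge\tilde u'_\ast$ this delivers $\tilde u'(x)\ge\tilde u'(0)\ge\tilde u'_\ast\exp\!\bigl(-\tfrac12c^2A-cB\bigr)=\tilde u'_{\ast\ast}$ for all $x$. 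The step I expect to be most delicate is the pointwise estimate for $\alpha$: one must verify that the piecewise-frozen arguments $\mathbb P_x[\tilde u]$ and $\mathbb P_x[\tilde u']$ never exceed $u_b$ and $\tilde u'^\ast$, and that the signs in \eqref{positiveness_of_N_derivative_with_respect_to_x} are genuinely sufficient to push each argument of $N$, $N'_u$ and $N'_x$ to its worst case without ever reversing an inequality; matching the stated constant exactly also hinges on the otherwise harmless replacement of $\tilde u/\tilde u'\le x$ by $\tilde u/\tilde u'\le1$, which requires $c\le1$.
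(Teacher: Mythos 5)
Your proposal follows essentially the same route as the paper's own (very terse) proof: the paper compresses your whole computation into a single application of Gronwall's inequality, bounding $\tilde{u}^{\prime}(c) \leq \tilde{u}^{\prime}(x)\exp\left(\int_{x}^{c}\alpha\, d\xi\right)$ and then estimating the exponent by exactly your pointwise bound $\alpha \leq A\xi + B$ (obtained, as in your argument, from the sign conditions \eqref{positiveness_of_N_derivative_with_respect_to_x}, the bound $\tilde{u} \leq u_{b}$, and Lemma \ref{lemma_about_u_tilde_prime_c_bounded_from_above} together with convexity), so that $\int_{0}^{c}(A\xi+B)\,d\xi = \tfrac{1}{2}c^{2}A + cB$ gives precisely the constant $\tilde{u}^{\prime}_{\ast\ast}$. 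One caveat: the step you flag as delicate --- replacing $\tilde{u}/\tilde{u}^{\prime} \leq x$ by $\tilde{u}/\tilde{u}^{\prime} \leq 1$ --- is justified by you via a ``normalization $c\leq 1$'' that the paper nowhere imposes (the interval $[a,b]$ is arbitrary). However, this is not a gap you introduced: the paper's Gronwall step silently makes the identical assumption, since passing from $\tilde{u}^{\prime\prime} = \alpha\,\tilde{u}$ to a factor $\exp\left(\int\alpha\,d\xi\right)$ rather than $\exp\left(\int\alpha\,\xi\,d\xi\right)$ amounts to using $\tilde{u}(\xi) \leq \tilde{u}^{\prime}(\xi)$, i.e.\ $\xi \leq 1$, on the whole integration range. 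Likewise, both your argument and the paper's (and indeed the lemma's statement, whose exponent is $\int_{0}^{c}(A\xi+B)\,d\xi$ with $\max_{0\leq x\leq c}$) tacitly take $a=0$, which is what makes the inequality $(x-x_{i}) \leq x$ legitimate. So your proof is correct exactly to the extent the paper's is; you have simply made explicit a hidden restriction that the paper's appeal to Gronwall obscures.
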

\begin{proof}
Using the Gronwall's inequality (see, for example, \cite[p. 42]{Teschl_ODE_and_DS}) and estimate \eqref{estimate_from_above_for_u_peimw_c} we get
$$\tilde{u}^{\prime}_{\ast} \leq \tilde{u}^{\prime}(c) \leq \tilde{u}^{\prime}(u)\exp\left(\int\limits_{u}^{c}\alpha(\mathbb{P}_{x}[\tilde{u}^{\prime}(\xi)], \mathbb{P}_{x}[\tilde{u}(\xi)], \xi)d\xi\right) \leq \frac{\tilde{u}^{\prime}(u)\tilde{u}^{\prime}_{\ast}}{\tilde{u}^{\prime}_{\ast\ast}}.$$
Dividing the inequality above by $\tilde{u}^{\prime}_{\ast}/\tilde{u}^{\prime}_{\ast\ast}$, we get estimate \eqref{restriction_on_tilde_u_prime_from_below}.

\end{proof}

\section{Error analysis}\label{section_error_analysis}

The current section is focused on the approximation properties of the numerical scheme introduced above. In particular, Theorem \ref{Main_theorem_abour_apptoximation_prorerties_of_the_SI_method} answers the question about interconnection between functions $\tilde{u}(x), \tilde{x}(u)$ (satisfying conditions \eqref{discretized_equation_straight} -- \eqref{matching_conditions_u}) and the solutions $u(x)$ and $x(u)$ respectively. To prove the theorem we first need to justify a few auxiliary statements that follow below.



\begin{theorem}\label{aux_theorem_about_approx_properties}

Let conditions \eqref{lema_1_positiveness_of_nonlinearity} and \eqref{positiveness_of_N_derivative_with_respect_to_x} hold true and $\tilde{u}(x) = \tilde{u}(x, h),$ $\tilde{x}(u) = \tilde{x}(u, h)$ is a pair of functions mentioned in Theorem \ref{Existence_theorem} \textup{(}whose conditions are obviously fulfilled\textup{)}, where

\begin{equation}\label{definition_of_h}
  h = \max\left\{\max\limits_{i\in\overline{1, N_{1}}}(x_{i}-x_{i-1}), \max\limits_{i\in\overline{1, N_{2}}}(\bar{u}_{i}-\bar{u}_{i-1})\right\} > 0.
\end{equation}
Then, for $h$ sufficiently small, there exists a function $\hat{x}(u) = \hat{x}(u, h)\in C^{2}([0, u_{b}]),$
which satisfies equation \eqref{Intro_Equation_Inverse} subjected to initial conditions
\begin{equation}\label{init_conditions_for_u_hat}
  \hat{x}(0) = \tilde{u}^{-1}(0) = a,\; \hat{x}^{\prime}(0) = \frac{1}{\tilde{u}^{\prime}(a, h)},
\end{equation}
and the following estimates hold true:
\begin{equation}\label{approx_estimates_for_u_hat}
  \|\tilde{u}(x) - \hat{u}(x)\|_{ [a, c], 1} \leq \kappa_{1} h^{2},
\end{equation}
\begin{equation}\label{approx_estimates_for_x_hat}
  \|\tilde{x}(u) - \hat{x}(u)\|_{[\tilde{u}(c), u_{b}], 1} \leq \kappa_{2} h^{2},
\end{equation}
where
$$\hat{u}(x) \stackrel{def}{=} \hat{x}^{-1}(x),$$
$$\|f(\xi)\|_{[\xi_{1}, \xi_{2}], i} \stackrel{def}{=} \max\limits_{\xi\in [\xi_{1}, \xi_{2}]}\left\{|f^{(0)}(\xi)|,\ldots, |f^{(i)}(\xi)|\right\},\; f^{(k)}(\xi)\stackrel{def}{=} \frac{d^{k}}{d\xi^{k}}f(x)$$
and constants $\kappa_{1}, \kappa_{2} > 0$ depend on BVP \eqref{Intro_Equation}, \eqref{Intro_boundary_conditions} only.
\end{theorem}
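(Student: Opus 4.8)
The plan is to produce $\hat{x}$ as an \emph{exact} solution of the inverse initial value problem and then to control both errors by Gronwall-type arguments, the whole point being that the frozen coefficients $\alpha$ and $\beta$ from \eqref{discretized_equation_straight_term_definition}, \eqref{discretized_equation_inverse_term_definition} are precisely the first-order Taylor polynomials (in $x$, resp. in $u$) of the exact coefficients $N(\tilde{u}(x),x)$ and $-\mathcal{N}(u,\tilde{x}(u))$; hence the local consistency defect on each subinterval is $O(h^{2})$. First I would assemble an $h$-independent ``tube'' in which everything lives: by Theorem \ref{Existence_theorem} we have $0<\tilde{u}<u_{b}$, $\tilde{u}^{\prime}>0$ and $\tilde{x}^{\prime}>0$ with $\tilde{x}^{\prime\prime}\le 0$ on the respective domains, while \eqref{estimate_from_above_for_u_peimw_c}, Lemma \ref{lemma_about_u_tilde_prime_c_bounded_from_above} and \eqref{restriction_on_tilde_u_prime_from_below} give $\tilde{u}^{\prime}_{\ast}\le\tilde{u}^{\prime}(c)\le\tilde{u}^{\prime\ast}$ and $\tilde{u}^{\prime}_{\ast\ast}\le\tilde{u}^{\prime}$, so that $\tilde{x}^{\prime}=1/\tilde{u}^{\prime}(c)$ is bounded and, being decreasing, stays bounded on $[\tilde{u}(c),u_{b}]$. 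All these constants depend only on $N,a,b,c,u_{b}$. On the compact set $[0,u_{b}]\times[a,b]$ with this bounded range of slopes, the maps $N$, $\mathcal{N}$ and $(x,p)\mapsto\mathcal{N}(u,x)p^{3}$ are Lipschitz; I fix the corresponding constants $L$ once and for all.

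Next I would define $\hat{u}$ as the solution of the exact straight problem $\hat{u}^{\prime\prime}=N(\hat{u},x)\hat{u}$ with $\hat{u}(a)=0$ and $\hat{u}^{\prime}(a)=\tilde{u}^{\prime}(a,h)$; by the very choice \eqref{init_conditions_for_u_hat} this matches the initial data of $\tilde{u}$ exactly, so that the left-endpoint error vanishes identically. A continuation/bootstrap argument handles existence: on the maximal subinterval of $[a,c]$ on which $\hat{u}$ exists and $\|\tilde{u}-\hat{u}\|_{[\,\cdot\,],1}\le 1$, the estimate of the following step forces this quantity to be $\le\kappa_{1}h^{2}<1$ for $h$ small, so $\hat{u}$ stays in the tube and extends, whence it is defined on all of $[a,c]$ with $0\le\hat{u}<u_{b}$. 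Its inverse solves \eqref{Intro_Equation_Inverse} on $[0,\hat{u}(c)]$, and continuing that inverse solution forward produces $\hat{x}=\hat{u}^{-1}\in C^{2}([0,u_{b}])$; global existence together with $0<\hat{x}^{\prime}\le 1/\tilde{u}^{\prime}(a,h)$ comes for free from the Bernoulli representation of Lemma \ref{lemma_represent_of_solution_inv_problem} applied to $\hat{x}$ and from the sign $\mathcal{N}(u,\hat{x}(u))\ge 0$ guaranteed by \eqref{lema_1_positiveness_of_nonlinearity}.

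For the error $e=\tilde{u}-\hat{u}$ on $[a,c]$ I would write $e^{\prime\prime}=\bigl[\alpha(\cdots)-N(\tilde{u},x)\bigr]\tilde{u}+\bigl[\mathcal{N}(\tilde{u},x)-\mathcal{N}(\hat{u},x)\bigr]$, where the first bracket is the Taylor remainder of \eqref{discretized_equation_straight_term_definition}, bounded by $Ch^{2}$ on each subinterval, and the second is $\le L\,|e|$. Since $e(a)=e^{\prime}(a)=0$, Gronwall's inequality (cf. \cite{Teschl_ODE_and_DS}) applied to $|e|+|e^{\prime}|$ yields \eqref{approx_estimates_for_u_hat}. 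In particular $|\tilde{u}(c)-\hat{u}(c)|$ and $|\tilde{u}^{\prime}(c)-\hat{u}^{\prime}(c)|$ are $O(h^{2})$, so by the smoothness of $\hat{x}$ the initial data of $\tilde{x}$ and $\hat{x}$ at the point $u=\tilde{u}(c)$, namely $c$ versus $\hat{x}(\tilde{u}(c))=\hat{x}(\tilde{u}(c))-\hat{x}(\hat{u}(c))+c$ and $1/\tilde{u}^{\prime}(c)$ versus $\hat{x}^{\prime}(\tilde{u}(c))$, also differ by $O(h^{2})$. Repeating the argument for $E=\tilde{x}-\hat{x}$ on $[\tilde{u}(c),u_{b}]$, now splitting $\beta(\cdots)(\tilde{x}^{\prime})^{3}=-\mathcal{N}(u,\tilde{x})(\tilde{x}^{\prime})^{3}+[\text{Taylor remainder of \eqref{discretized_equation_inverse_term_definition}}](\tilde{x}^{\prime})^{3}$ and invoking the Lipschitz bound for $(x,p)\mapsto\mathcal{N}(u,x)p^{3}$, Gronwall delivers \eqref{approx_estimates_for_x_hat}.

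I expect two points to be delicate. The genuine $O(h^{2})$ (rather than merely $o(h)$) consistency needs the exact coefficients $N(\tilde{u}(x),x)$ and $\mathcal{N}(u,\tilde{x}(u))$ to be Lipschitz in their total derivative along the solutions, i.e. boundedness of the second-order quantities assembled from $N^{\prime}_{uu},N^{\prime}_{ux},N^{\prime}_{xx}$ and from $\tilde{u}^{\prime\prime}$, $\tilde{x}^{\prime\prime}$; this is exactly where the smoothness implicit in \eqref{positiveness_of_N_derivative_with_respect_to_x} and the uniform tube bounds on $\tilde{u}^{\prime\prime},\tilde{x}^{\prime\prime}$ must be used, and it is the step most sensitive to the precise regularity assumed on $N$. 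The second delicate point is that the a priori error bound and the existence of $\hat{u}$ on the whole of $[a,c]$ are interdependent, so the bootstrap above must be carried out carefully to break the circularity and, at the same time, to ensure that the final constants $\kappa_{1},\kappa_{2}$ are expressed solely through the tube constants and the Lipschitz constant $L$ of the first paragraph, hence depend only on BVP \eqref{Intro_Equation}, \eqref{Intro_boundary_conditions} and not on $h$ or on the particular shooting value selected in Theorem \ref{Existence_theorem}.
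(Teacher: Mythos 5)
Your plan is, in its architecture, the paper's own proof: the same auxiliary function $\hat{u}$ (the exact solution of \eqref{Intro_Equation} with data $\hat{u}(a)=0$, $\hat{u}^{\prime}(a)=\tilde{u}^{\prime}(a,h)$), the same key observation that $\alpha$ and $\beta$ are exactly the first-order Taylor polynomials of the true coefficients so that the per-subinterval defect is $O(h^{2})$, the same three-stage structure (straight leg, transfer of the initial data at $u=\tilde{u}(c)$, inverse leg), and the same Gronwall-plus-bootstrap machinery with constants built from the tube bounds $\tilde{u}^{\prime}_{\ast}$, $\tilde{u}^{\prime\ast}$, $\tilde{u}^{\prime}_{\ast\ast}$. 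Your deviations are cosmetic: a continuous Gronwall inequality in place of the paper's subinterval recursion \eqref{recursive_estimates_for_Z_i}--\eqref{h_squared_p_estimate}, a mean-value argument (via $\hat{x}(\hat{u}(c))=c$) in place of the paper's tangent-triangle construction at the matching point, and a direct estimate of the equation for $\tilde{x}-\hat{x}$ in place of subtracting the integral representations \eqref{x_hat_expression}, \eqref{x_tilde_expression}.

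There is, however, one step that fails as written: the claim that global existence of $\hat{x}$ on $[0,u_{b}]$, together with $0<\hat{x}^{\prime}\le 1/\tilde{u}^{\prime}(a,h)$, ``comes for free'' from the Bernoulli representation and the sign $\mathcal{N}(u,\hat{x}(u))\ge 0$. Condition \eqref{lema_1_positiveness_of_nonlinearity} guarantees that sign only while $\hat{x}(u)\in[a,b]$, and nothing a priori prevents $\hat{x}$ from crossing $x=b$ strictly before $u$ reaches $u_{b}$: the Bernoulli bound only yields the linear growth estimate $\hat{x}(u)\le a+u/\tilde{u}^{\prime}_{\ast\ast}$, and $a+u_{b}/\tilde{u}^{\prime}_{\ast\ast}$ typically far exceeds $b$ (indeed, if $\tilde{u}^{\prime}(a)$ slightly underestimates $u^{\prime}(a)$, a monotone-dependence argument in the spirit of Lemma \ref{lemma_about_monotonicity_of_u_nu} shows $\hat{x}$ \emph{does} reach $b$ before $u_{b}$). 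Beyond $x=b$ the function $\mathcal{N}$ is either undefined or, after a Whitney extension, of uncontrolled sign, so the square root in the representation can degenerate and the solution can cease to exist. Consequently, on the inverse leg exactly as on the straight leg, existence and the error estimate cannot be decoupled: one must advance in fixed Picard--Lindel\"of increments, with $\mathcal{N}$ extended to a slightly larger $x$-range $[c-\varepsilon,b+\varepsilon]$, using at each step the already-established bound $\|\tilde{x}-\hat{x}\|\le\kappa_{2}h^{2}$ and the inclusion $\tilde{x}([\tilde{u}(c),u_{b}])\subseteq[c,b]$ to keep $\hat{x}$ inside the tube; this interleaving is precisely the role of the sequence $d_{s}$ and the index sets $\mathbb{J}_{s}$ in Part 3 of the paper's proof. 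You flag this existence/estimate circularity yourself, but only for $\hat{u}$ on $[a,c]$; it must be carried out for $\hat{x}$ as well, and once it is, your argument coincides with the paper's.
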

\begin{proof}
  Logically, we consider the proof to consist of 3 parts, so that each subsequent part relies on the results of the previous ones. For the convenience of the reader, we make this division explicit by adding the corresponding headers.

{\it Part 1 : existence of $\hat{x}(u)$ on $[0, \tilde{u}(c)]$ and estimate \eqref{approx_estimates_for_u_hat}. }

    Let us, for a moment, step back from the notations of the theorem and re-define function $\hat{u}(x)$ to be the solution of equation \eqref{Intro_Equation} subjected to initial conditions
    \begin{equation}\label{theorem_about_approximation_init_conditions_for_aux_problem}
      u(a) = 0,\; u^{\prime}(a) = \tilde{u}(a).
    \end{equation}
    If $\hat{u}(x)$ (defined in such a way) exists on $[a, c]$ and estimates \eqref{approx_estimates_for_u_hat} holds true, then we can be sure that, for $h$ sufficiently small, $\hat{u}(c) > 0$ (since $\tilde{u}(c) > 0,$ see \eqref{c_inequality_for_straight_discretization}). The latter, in conjunction with Lemma \ref{lemma_monotonicity_of_u} (whose conditions are fulfilled), yields us existence of function $\hat{u}^{-1}(u)$ on $[0, \hat{u}(c)],$ which, apparently, can be taken for $\hat{x}(u).$ If, additionally, we manage to prove that $\hat{u}(x)$ exists on a little bit bigger interval, say $[a, c+\delta],$ for some $\delta > 0$ independent on $h,$ then, taking into account monotonicity of $\hat{u}(x),$ and restricting $h$ even more (if needed), we can ensure that $\tilde{u}(c) < \hat{u}(c+\delta),$ and thus get the existence of $\hat{x}(u)$ on $[0, \tilde{u}(c)].$ With this scheme in mind, we proceed by proving the existence of $\hat{u}(x)$ and estimate \eqref{approx_estimates_for_u_hat}.

    Let us fix some arbitrary $\varepsilon > 0.$

    Rewriting IVP \eqref{Intro_Equation}, \eqref{theorem_about_approximation_init_conditions_for_aux_problem} in an equivalent vector form
    \begin{equation}\label{theorem_about_approximation_aux_problem_vector_form}
      \dot{\mathbf{u}}(x) \stackrel{def}{=} \left[
        \begin{array}{c}
          u^{\prime\prime}(x) \\
          u^{\prime}(x) \\
        \end{array}
      \right] = \mathbf{F}(\mathbf{u}(x), x) \stackrel{def}{=} \left[
                  \begin{array}{c}
                    \mathcal{N}(u(x), x) \\
                    u^{\prime}(x) \\
                  \end{array}
                \right],\; \mathbf{u}(a) = \left[
                                       \begin{array}{c}
                                         \tilde{u}^{\prime}(a) \\
                                         0 \\
                                       \end{array}
                                     \right],
    \end{equation}
     and applying the {\it Picard-Lindelof Theorem} (see, for example, \cite[p. 350]{Kelley_Peterson_2010}) to it, we conclude that solution $\hat{u}(x)$ exists at least on $$\left[a, c_{1}\right],\; c_{1} = \min\left\{a+\frac{\varepsilon}{2M}, c\right\}$$
     where
     $$
     M = \max\limits_{(\mathbf{u},x) \in \mathbb{D}_{\varepsilon}}\left\| \mathbf{F}(\mathbf{u}, x)\right\|,\; \mathbb{D}_{\varepsilon} = \mathbb{D}_{\varepsilon, u} \times \mathbb{D}_{\varepsilon, u^{\prime}}\times [a,c].
     $$
     $$\mathbb{D}_{\varepsilon, u} = \left\{u\in \mathbb{R} \; | \; -\varepsilon \leq u \leq u_{b} + \varepsilon \right\} \supseteq \left\{u \in \mathbb{R}\; | \; \min\limits_{x\in[a,c]}\tilde{u}(x) - \varepsilon \leq u \leq \max\limits_{x\in[a,c]}\tilde{u}(x) + \varepsilon \right\}\footnote{See estimate \eqref{boundaries_for_u_tilde}.},$$
     $$\mathbb{D}_{\varepsilon, u^{\prime}} = \left\{u\in \mathbb{R} \; | \; -\varepsilon \leq u \leq \tilde{u}^{\prime\ast} + \varepsilon\right\} \supseteq \left\{u \in \mathbb{R}\; | \; \min\limits_{x\in[a,c]}\tilde{u}^{\prime}(x) - \varepsilon \leq u \leq \max\limits_{x\in[a,c]}\tilde{u}^{\prime}(x) + \varepsilon\right\}\footnote{See estimate \eqref{u_tilde_prime_c_bounded_from_above}.},$$
     and, in addition to that,
     \begin{equation}\label{estimate_for_u_from_PL_theorem}
       \|\mathbf{u}(x) - \mathbf{u}(a)\| \leq \frac{\varepsilon}{2},\; \forall x\in [a, c_{1}].
     \end{equation}

To simplify the proof, we assume that $$h < c_{1} - a.$$
The latter, guarantees, that the set $$\mathbb{I}_{1} = \left\{i\in \overline{1, N_{1}}\; | \; x_{i} < c_{1}\right\}$$ is non-empty.

   It is easy to see, that on each interval $[x_{i-1}, x_{i}],\; i\in \mathbb{I}_{1},$ the differences $\tilde{u}^{(k)}(x) - \hat{u}^{(k)}(x),$ $k=0,1$ can be estimated from the Cauchy problem

  \begin{equation}\label{IVP_for_estimating_norm_of_Z}
    \dot{Z}_{i}(x) = \left[
                       \begin{array}{cc}
                         0 & 1 \\
                         \alpha_{i-1}(\tilde{u}^{\prime}(x_{i-1}), \tilde{u}(x_{i-1}), x) & 0 \\
                       \end{array}
                     \right]Z_{i}(x)
  \end{equation}
  $$+ \left[
                                         \begin{array}{c}
                                           0 \\
                                           \left(N(\hat{u}(x), x) - \alpha_{i-1}(\tilde{u}^{\prime}(x_{i-1}), \tilde{u}(x_{i-1}), x)\right)\hat{u}(x) \\
                                         \end{array}
                                       \right],$$
  $$x\in [x_{i-1}, x_{i}],\; Z_{i}(x_{i-1}) = Z_{i-1}(x_{i-1}),$$
  where
  $$Z_{i}(x) = \left[
                 \begin{array}{c}
                   z_{i}(x) \\
                   z^{\prime}_{i}(x) \\
                 \end{array}
               \right],\; z_{i}(x) = \hat{u}(x) - \tilde{u}(x),\; x\in[x_{i-1}, x_{i}],\;  Z_{0}(x)\equiv 0,\; i\in \mathbb{I}_{1}.
  $$

  From \eqref{IVP_for_estimating_norm_of_Z}, using \eqref{estimate_for_u_from_PL_theorem}, we get the recursive estimates
  \begin{equation}\label{recursive_estimates_for_Z_i}
    \|Z_{i}(x)\| \leq (1+h_{i}Q)\|Z_{i-1}\| + E\int\limits_{x_{i-1}}^{x_{i}}\|Z_{i}(\xi)\|d\xi + h_{i}^{3}K, \; x\in[x_{i-1}, x_{i}],\; i\in \mathbb{I}_{1},
  \end{equation}
  where
  $\|Z_{i}(x)\| = \max\{|z_{i}(x)|, |z^{\prime}_{i}(x)|\},\; x\in [x_{i-1}, x_{i}],\; \|Z_{i}\| \stackrel{def}{=} \max\limits_{x\in[x_{i-1}, x_{i}]}\|Z_{i}(x)\|, $ $h_{i} = x_{i}-x_{i-1}$
  $$E = \max\left\{1, R\right\},\; R = L_{0} + (c-a)L_{1}\left(\tilde{u}^{\prime\ast} + 1\right) \geq \max\limits_{x\in [x_{i-1}, x_{i}]}|\alpha_{i-1}(\tilde{u}^{\prime}(x_{i-1}), \tilde{u}(x_{i-1}), x)|,\; \forall i\in \mathbb{I}_{1},$$
  $$Q = \left(u_{b} + \varepsilon\right)\left(L_{1} + (c-a)\left(L_{1} + L_{2}\left(\tilde{u}^{\prime\ast} + 1\right)\right)\right) $$
  $$\geq \max\limits_{x\in[x_{i-1}, x_{i}]}|\hat{u}(x)(\alpha_{i-1}(\tilde{u}^{\prime}(x_{i-1}), \tilde{u}(x_{i-1}), x) - \alpha_{i-1}(\hat{u}^{\prime}(x_{i-1}), \hat{u}(x_{i-1}), x))|,\; \forall i\in \mathbb{I}_{1},$$
  $$K = \frac{u_{b}+\varepsilon}{2}\left(L_{2}\left(\tilde{u}^{\prime\ast} + \varepsilon + 1\right)^{2} + L_{1}L_{0}(u_{b}+\varepsilon)\right)\geq \frac{1}{2}\max\limits_{x\in [x_{i-1}, x_{i}]}|\hat{u}(x)\left(N(\hat{u}(x), x)^{\prime\prime}_{xx}\right)|,\; \forall i\in \mathbb{I}_{1},$$
  $$L_{k} = \max\left\{\left|\frac{d^{k}N(u,x)}{du^{i}dx^{j}}\right|\; : \; i,j \in \mathbb{N},\; i+j=k,\; u\in \mathbb{D}_{\varepsilon, u}, x\in[a,c]\right\}.$$

  In the estimates above we actively used result of Lemma \ref{lemma_about_u_tilde_prime_c_bounded_from_above}.

  Applying the Gronwall's inequality (see, for example, \cite[p. 42]{Teschl_ODE_and_DS}) to \eqref{recursive_estimates_for_Z_i} we get the estimate
\begin{equation}\label{estimate_due_to_Gronwall's inequality}
  \|Z_{i}\| \leq \left(\left(1+h_{i}Q\right)\|Z_{i-1}\| + h_{i}^{3}K\right)\exp\left(h_{i}E\right),\; i\in \mathbb{I}_{1},
\end{equation}
which, when applied recursively, yields the inequalities
\begin{equation}\label{h_squared_p_estimate}
  \|Z_{i}\| \leq K\sum\limits_{j=1}^{i}h_{j}^{3}\prod\limits_{k=j+1}^{i}\left(1+h_{k}Q\right)\prod\limits_{k=j}^{i}\exp\left(h_{k}E\right)
\end{equation}
$$ \leq K h^{2}\sum\limits_{j=1}^{i}h_{j}\prod\limits_{k=j+1}^{i}\left(1+h_{k}Q\right)\prod\limits_{k=j}^{i}\exp\left(h_{k}E\right)$$
$$\leq K h^{2} \sum\limits_{j=1}^{i}h_{j}\prod\limits_{k=j+1}^{i}\exp\left(h_{k}Q\right)\prod\limits_{k=j}^{i}\exp\left(h_{k}E\right)$$
$$\leq K h^{2} \sum\limits_{j=1}^{i}h_{j}\prod\limits_{k=1}^{i}\exp\left(h_{k}Q\right)\prod\limits_{k=1}^{i}\exp\left(h_{k}E\right) \leq K h^{2} \sum\limits_{j=1}^{i}h_{j}\exp\left((Q+E)\sum\limits_{k=1}^{i}h_{k}\right)$$
$$\leq K h^{2} (c-a)\exp\left((Q+E)(c-a)\right) \stackrel{def}{=} \kappa_{1}h^{2}.$$

So far we have proved that estimate \eqref{h_squared_p_estimate} holds true for all $i\in \mathbb{I}_{1}.$ At the same time, one can notice that the last expression in the chain of inequalities \eqref{h_squared_p_estimate} does not depend on $i.$ This, in particular, means that if we require that $h$ is small enough to ensure inequality
$$\kappa_{1} h^{2} \leq \frac{\varepsilon}{2},$$
then, using precisely the same reasoning as above, we can prove that solution $\mathbf{u}(x)$ of the Cauchy problem \eqref{theorem_about_approximation_aux_problem_vector_form} exists at least on
$$[a, c_{2}], \; c_{2} = \max\limits_{i\in \mathbb{I}_{1}}\left\{x_{i}\right\} + \frac{\varepsilon}{2M}.$$
Apparently, repeating the procedure not more than $N_{1}$ times, we will prove that solution $\hat{u}(x)$ exists on $[a,c]$ and estimate \eqref{approx_estimates_for_u_hat} holds true.

\begin{figure}[h!]
    \centering
    \begin{subfigure}[b]{0.48\textwidth}
    \begin{tikzpicture}
    \draw[<->] (6.5,0) node[below]{$x$} -- (0,0) --
    (0,4) node[left]{$u$};
    \draw (1, 1.65) node[left]{$\mathcal{A}$};
    \draw (5.3, 1.4) node[above]{$\mathcal{B}$};
    \draw (1, 0.6) node[left]{$\mathcal{C}$};
    \draw (3, 1.4) node[above]{$\mathcal{D}$};
    \draw (-0.1, 1.45) node[left]{$\tilde{u}(c)$} -- (6.1, 1.45);
    \draw (-0.1, 0.43) node[left]{$\tilde{u}(\hat{x}(\tilde{u}(c)))$} -- (4, 0.43);
    \draw (1, 0.1) -- (1, 2.5);
    \draw (3.17, -0.1) node[below]{$c$} -- (3.17, 2.5);
    \draw (1, 0.43) -- (6.1, 1.65);
    \draw[very thick] (0,0.25) to [out=8,in=-110] (5, 4.0) node[left]{$\tilde{u}(x)$};
    \draw[thick] (0,1.25) to [out=6,in=-110] (3.5, 4.0) node[left]{$\hat{u}(x)$};
    \end{tikzpicture}
    \caption{Case $\tilde{u}(c) < \hat{u}(c).$}\label{fig:M1a}
    \end{subfigure}
    \begin{subfigure}[b]{0.48\textwidth}
    \begin{tikzpicture}
    \draw[<->] (6.5,0) node[below]{$x$} -- (0,0) --
    (0,4) node[left]{$u$};
    \draw (1, 1.65) node[left]{$\mathcal{A}$};
    \draw (5.3, 1.4) node[above]{$\mathcal{B}$};
    \draw (1, 0.6) node[left]{$\mathcal{C}$};
    \draw (3, 1.4) node[above]{$\mathcal{D}$};
    \draw (-0.1, 1.45) node[left]{$\tilde{u}(c)$} -- (6.1, 1.45);
    \draw (-0.1, 0.43) node[left]{$\hat{u}(c)$} -- (4, 0.43);
    \draw (1, -0.1) node[below]{$c$} -- (1, 2.5);
    \draw (5.7, -0.1) node[below]{$c+\delta$} -- (5.7, 2.5);
    \draw (1, 0.43) -- (6.1, 1.65);
    \draw[thick] (0,0.25) to [out=8,in=-110] (5, 4.0) node[left]{$\hat{u}(x)$};
    \draw[very thick] (0,1.25) to [out=6,in=-110] (3.5, 4.0) node[left]{$\tilde{u}(x)$};
    \end{tikzpicture}
    \caption{Case $\tilde{u}(c) \geq \hat{u}(c).$}\label{fig:M1b}
    \end{subfigure}
    \caption{ Schematic illustration of the two possible mutual placements of the graphs of functions $\tilde{u}(x)$ and $\check{u}(x)$ near the point $x = c.$ } \label{fig:M1}
    \end{figure}
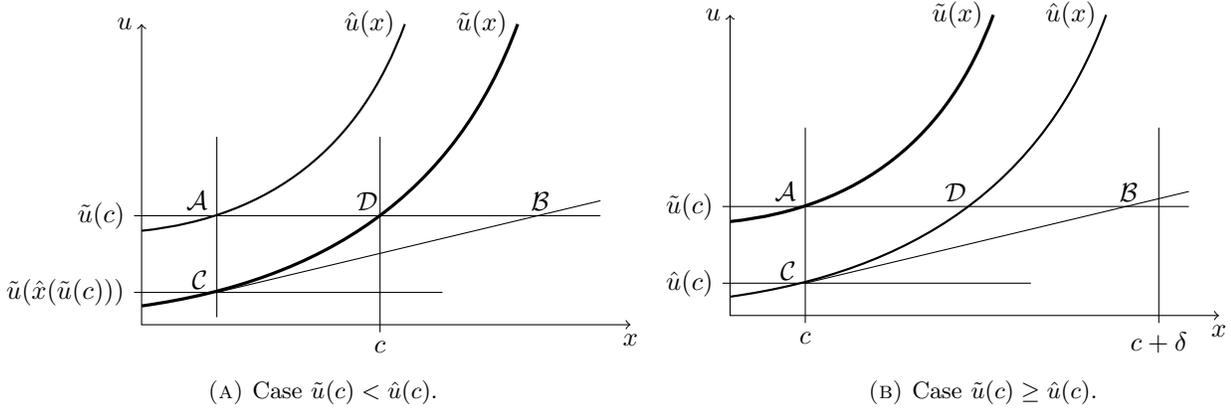

So far, we have proved that solution $\hat{u}(x)$ to IVP \eqref{Intro_Equation}, \eqref{theorem_about_approximation_init_conditions_for_aux_problem} exists at least on $[a,c].$ By means of inequality \eqref{h_squared_p_estimate}, it is not difficult to ensure that the solution, actually, exists on a bigger interval, namely,
\begin{equation}\label{definition_of_delta}
[a, c + \delta], \; \delta = \min\left\{\frac{\varepsilon}{2\bar{M}}, b-c\right\}, \; \bar{M} = \max\left\{\left\| \mathbf{F}(\mathbf{u}, x)\right\|\;:\; (\mathbf{u},x) \in \mathbb{D}_{\varepsilon, u} \times \mathbb{D}_{\varepsilon, u^{\prime}}\times [a,b]\right\}.
\end{equation}
As it was pointed out above, the latter fact yields us existence of $\hat{x}(u)$ on $[0, \tilde{u}(c)],$ provided that $h$ is sufficiently small.

{\it Part 2 : estimate \eqref{approx_estimates_for_x_hat} for $u = \tilde{u}(c)$. }

Now we want to proceed by proving estimate \eqref{approx_estimates_for_x_hat}. However, to do so, first we want to estimate expressions $|\tilde{x}(u) - \hat{x}(u)|,$ $|\tilde{x}^{\prime}(u) - \hat{x}^{\prime}(u)|$ at point $u=\tilde{u}(c).$ Let us begin by considering the case $\tilde{u}(c)<\hat{u}(c),$ which is illustrated on Fig. \ref{fig:M1a}. As it can be seen from the corresponding illustration, $|\tilde{x}(u) - \hat{x}(u)| = |\mathcal{AD}|,$ where segment $\mathcal{AD}$ is a part of cathetus $\mathcal{AB}$ of the right triangle $\triangle \mathcal{A}\mathcal{B}\mathcal{C}.$ The triangle is constructed in such a way, that its hypotenuse $\mathcal{B}\mathcal{C}$ lies on the tangent line to curve $u=\tilde{u}(x)$ at point $x = \hat{x}(\tilde{u}(c)),$ which yields us the estimate
\begin{equation}\label{inequality_x_tilde_minus_x_hat_case_a}
  |c - \hat{x}(\tilde{u}(c))| = |\mathcal{AD}| \leq |\mathcal{AB}| = \frac{|\mathcal{AC}|}{\tan \angle \mathcal{ABC}} \leq \frac{\kappa_{1} h^{2}}{\tan \angle \mathcal{ABC}} \leq \frac{\kappa_{1} h^{2}}{\tilde{u}^{\prime}_{\ast\ast}},
\end{equation}
where constant $\tilde{u}^{\prime}_{\ast\ast}$ is defined in \eqref{restriction_on_tilde_u_prime_from_below}.

Similarly, considering the case $\tilde{u}(c)\geq \hat{u}(c),$ which is illustrated on Fig. \ref{fig:M1b},
we get the estimate
\begin{equation}\label{inequality_x_tilde_minus_x_hat_case_b}
  |c - \hat{x}(\tilde{u}(c))| = |\mathcal{AD}| \leq |\mathcal{AB}| = \frac{|\mathcal{AC}|}{\tan \angle \mathcal{ABC}} \leq \frac{\kappa_{1} h^{2}}{\tan \angle \mathcal{ABC}} = \frac{\kappa_{1} h^{2}}{\hat{u}^{\prime}(c)} \leq \frac{\kappa_{1} h^{2}}{\tilde{u}^{\prime}_{\ast} - \kappa_{1} h^{2}},
\end{equation}
where in the last inequality we used estimate \eqref{h_squared_p_estimate} and Remark \ref{remark_about_u_tilde_prime_c_bounded from below}, i.e,
$$0 < \hat{u}^{\prime}(c) \geq \tilde{u}^{\prime}(c) - \kappa_{1} h^{2} \geq \tilde{u}^{\prime}_{\ast} - \kappa_{1} h^{2}.$$
Requiring that
\begin{equation}\label{requirement_x_tilde_minus_x_hat_leq_case_b}
\kappa_{1} h^{2} \leq \frac{\tilde{u}_{\ast}^{\prime}}{4}, \; |c - \hat{x}(\tilde{u}(c))| \leq \delta\footnote{See \eqref{definition_of_delta}.}
\end{equation}
and combining \eqref{inequality_x_tilde_minus_x_hat_case_b} with \eqref{inequality_x_tilde_minus_x_hat_case_a} we get the estimate
\begin{equation}\label{estimate_x_tilde_minus_x_hat_case_a}
  |c - \hat{x}(\tilde{u}(c))| \leq \kappa_{1}h^{2}\max\left\{\frac{4}{3\tilde{u}^{\prime}_{\ast}}, \frac{1}{\tilde{u}^{\prime}_{\ast\ast}}\right\}.
\end{equation}
Requirement \eqref{requirement_x_tilde_minus_x_hat_leq_case_b} can be restated in terms of another restriction on $h$ as follows:
\begin{equation}\label{bound_of_magnitude_of_h_to_fulfill_requirement_x_tilde_minus_x_hat_leq_case_b}
  h^{2} \leq \frac{\tilde{u}^{\prime}_{\ast}}{4\kappa_{1}}\min\left\{1, 3\delta\right\}.
\end{equation}

By requiring additionally
$$h^{2}\leq \frac{\tilde{u}^{\prime}_{\ast}}{4\kappa_{1}\tilde{u}^{\prime\prime\ast}}\min\left\{\frac{3\tilde{u}^{\prime}_{\ast}}{4}, \tilde{u}^{\prime}_{\ast\ast}\right\},$$
which in the light of estimate \eqref{estimate_x_tilde_minus_x_hat_case_a} yields us the inequality
\begin{equation}\label{requirement_x_tilde_minus_x_hat_leq}
  |c - \hat{x}(\tilde{u}(c))|\tilde{u}^{\prime\prime\ast} \leq \frac{\tilde{u}^{\prime}_{\ast}}{4},
\end{equation}
where
$$\tilde{u}^{\prime\prime\ast} = \left(L_{0} + (c-a)L_{1}\left(\tilde{u}^{\prime\ast} + 1\right)\right)u_{b} \geq \tilde{u}^{\prime\prime}(x) = \alpha(\mathbb{P}_{x}(\tilde{u}^{\prime}(x)), \mathbb{P}_{x}(\tilde{u}(x)), x)\tilde{u}(x),\; \forall x\in [a,c],$$
we ensure that
\begin{equation}\label{estimate_for_u_hat_prime}
  \hat{u}^{\prime}(\hat{x}(\tilde{u}(c))) = \frac{1}{\hat{x}^{\prime}(\tilde{u}(c))} \geq \frac{\tilde{u}^{\prime}_{\ast}}{2}.
\end{equation}
Indeed:
$$\hat{u}^{\prime}(\hat{x}(\tilde{u}(c))) - \frac{\tilde{u}^{\prime}_{\ast}}{2} = \hat{u}^{\prime}(\hat{x}(\tilde{u}(c))) - \tilde{u}^{\prime}(\hat{x}(\tilde{u}(c))) + \tilde{u}^{\prime}(\hat{x}(\tilde{u}(c))) - \tilde{u}^{\prime}(c) + \tilde{u}^{\prime}(c) - \frac{\tilde{u}^{\prime}_{\ast}}{2}$$
$$\geq \frac{\tilde{u}^{\prime}_{\ast}}{2} - |\hat{u}^{\prime}(\hat{x}(\tilde{u}(c))) - \tilde{u}^{\prime}(\hat{x}(\tilde{u}(c)))| - |\tilde{u}^{\prime}(\hat{x}(\tilde{u}(c))) - \tilde{u}^{\prime}(c)|$$
$$\geq \frac{\tilde{u}^{\prime}_{\ast}}{2} - \kappa_{1}h^{2} - |c - \hat{x}(\tilde{u}(c))|\tilde{u}^{\prime\prime\ast} \geq \frac{\tilde{u}^{\prime}_{\ast}}{2} - \frac{\tilde{u}^{\prime}_{\ast}}{4} - \frac{\tilde{u}^{\prime}_{\ast}}{4} = 0.$$

As for the corresponding estimate for the derivatives, i.e.  $|\tilde{x}^{\prime}(\tilde{u}(c)) - \hat{x}^{\prime}(\tilde{u}(c))|,$ we can obtain it in a unified way without a need to separately consider the two cases introduced above (see Fig. \ref{fig:M1a}, \ref{fig:M1b}):
\begin{equation}\label{estimate_for_derivative_total}
  |\tilde{x}^{\prime}(\tilde{u}(c)) - \hat{x}^{\prime}(\tilde{u}(c))| = \left|\frac{\tilde{u}^{\prime}(c) - \hat{u}^{\prime}(\hat{x}(\tilde{u}(c)))}{\tilde{u}^{\prime}(c)\hat{u}^{\prime}(\hat{x}(\tilde{u}(c)))}\right|\leq \frac{2}{\left(\tilde{u}^{\prime}_{\ast}\right)^{2}}\left|\tilde{u}^{\prime}(c) - \hat{u}^{\prime}(\hat{x}(\tilde{u}(c)))\right|\footnote{Here we have used inequality \eqref{estimate_for_u_hat_prime}.}
\end{equation}
$$\leq \frac{2}{\left(\tilde{u}^{\prime}_{\ast}\right)^{2}}\left(\left|\tilde{u}^{\prime}(c) - \hat{u}^{\prime}(c)\right| + \left|\hat{u}^{\prime}(c) - \hat{u}^{\prime}(\hat{x}(\tilde{u}(c)))\right|\right)\leq \frac{2h^{2}\kappa_{1}}{\left(\tilde{u}^{\prime}_{\ast}\right)^{2}}\left(1 + \hat{u}^{\prime\prime\ast}\max\left\{\frac{4}{3\tilde{u}^{\prime}_{\ast}}, \frac{1}{\tilde{u}^{\prime}_{\ast\ast}}\right\}\right),$$
where
$$\hat{u}^{\prime\prime\ast} = \max\left\{|\mathcal{N}(u,x)| : \; u \in [0, u_{b}],\; x\in [a, c+\delta]\right\} \geq \hat{u}^{\prime\prime}(x),\; \forall x\in [a, c+\delta].$$

{\it Part 3 : existence of $\hat{x}(u)$ on $[\tilde{u}(c), u_{b}]$ and estimate \eqref{approx_estimates_for_x_hat}. }

Let us assume that $h$ is small enough to ensure inequalities (see estimates \eqref{estimate_x_tilde_minus_x_hat_case_a}, \eqref{estimate_for_derivative_total})
\begin{equation}\label{basic_assumption_part_3}
|c - \hat{x}(\tilde{u}(c))|, |\tilde{x}^{\prime}(\tilde{u}(c)) - \hat{x}^{\prime}(\tilde{u}(c))| \leq \frac{\varepsilon}{2}.
\end{equation}
Then, according to the {\it Picard-Lindelof Theorem} (see, for example, \cite[p. 350]{Kelley_Peterson_2010}), function $\hat{x}(u)$ exists at least on
\begin{equation}\label{initial_existance_interval_for_inverse_function}
  [\tilde{u}(c), d_{s}],
\end{equation}
for $s = 1,$ where
$$d_{i} = \min\left\{u_{b}, d_{i-1}+\frac{\varepsilon}{2M}\right\},\; d_{0} = \tilde{u}(c),$$
$$M = \max\left\{\left\|F(x^{\prime}, x, u)\right\|\; : x^{\prime} \in \mathbb{D}_{\varepsilon, x^{\prime}},\; x\in \mathbb{D}_{\varepsilon, x},\; u \in \mathbb{D}_{\varepsilon, u}\right\}$$
$$ F(x^{\prime}, x, u) = \left[
                                                     \begin{array}{c}
                                                       -\mathcal{N}(u, x)\left(x^{\prime}\right)^{3} \\
                                                       x^{\prime} \\
                                                     \end{array}
                                                   \right],\; \mathbb{D}_{\varepsilon, x^{\prime}} = [0, 1/\tilde{u}^{\prime}_{\ast} + \varepsilon],\; \mathbb{D}_{\varepsilon, x} = [c-\varepsilon, b+\varepsilon],\; \mathbb{D}_{\varepsilon, u} = [- \varepsilon, u_{b} + \varepsilon]
$$
and constant $\tilde{u}^{\prime}_{\ast}$ is defined in \eqref{estimate_from_above_for_u_peimw_c}.

For the sake of simplicity, we assume that
$$h < d_{s} - d_{s-1},\; \forall s : d_{s} \neq d_{s-1},$$
which guarantees that a set of indices $$\mathbb{J}_{s} = \left\{i\in \mathbb{N}\; |\; d_{s-1} < \bar{u}_{i} \leq d_{s} \right\}$$ is non-empty as long as $d_{s} \neq d_{s-1}.$

It is not difficult to verify that on interval \eqref{initial_existance_interval_for_inverse_function} functions $\hat{x}(u)$ and $\tilde{x}(u)$ satisfy equalities
\begin{equation}\label{x_hat_expression}
  \hat{x}^{\prime}(u) = \frac{1}{\sqrt{(\hat{x}^{\prime}(\tilde{u}(c)))^{-2} + 2 \int\limits_{\tilde{u}(c)}^{u}\mathcal{N}(\eta, \hat{x}(\eta))d\eta}},
\end{equation}
and
\begin{equation}\label{x_tilde_expression}
  \tilde{x}^{\prime}(u) = \frac{1}{\sqrt{(\tilde{x}^{\prime}(\tilde{u}(c)))^{-2} - 2\int\limits_{\tilde{u}(c)}^{u}\beta(\mathbb{P}_{u}(\tilde{x}^{\prime}(\eta)), \mathbb{P}_{u}(\tilde{x}(\eta)), \eta)d\eta}}
\end{equation}
respectively.

Subtracting \eqref{x_tilde_expression} from \eqref{x_hat_expression} we get

$$\hat{x}^{\prime}(u) - \tilde{x}^{\prime}(u) = \left(\hat{x}^{\prime}(\tilde{u}(c)) - \tilde{x}^{\prime}(\tilde{u}(c))\right)\frac{\hat{x}^{\prime}(\tilde{u}(c)) + \tilde{x}^{\prime}(\tilde{u}(c))}{\left(\hat{x}^{\prime}(\tilde{u}(c))\tilde{x}^{\prime}(\tilde{u}(c))\right)^{2}}\frac{\left(\hat{x}^{\prime}(u)\tilde{x}^{\prime}(u)\right)^{2}}{\hat{x}^{\prime}(u) + \tilde{x}^{\prime}(u)} $$
$$+ \left(2 \int\limits_{\tilde{u}(c)}^{u}\mathcal{N}(\eta, \hat{x}(\eta))d\eta + 2\int\limits_{\tilde{u}(c)}^{u}\beta(\mathbb{P}_{u}(\tilde{x}^{\prime}(\eta)), \mathbb{P}_{u}(\tilde{x}(\eta)), \eta)d\eta\right)\frac{\left(\hat{x}^{\prime}(u)\tilde{x}^{\prime}(u)\right)^{2}}{\hat{x}^{\prime}(u) + \tilde{x}^{\prime}(u)},$$
which, together with the obvious inequality
$$0 < \frac{\hat{x}^{\prime}(\tilde{u}(c)) + \tilde{x}^{\prime}(\tilde{u}(c))}{\left(\hat{x}^{\prime}(\tilde{u}(c))\tilde{x}^{\prime}(\tilde{u}(c))\right)^{2}}\frac{\left(\hat{x}^{\prime}(u)\tilde{x}^{\prime}(u)\right)^{2}}{\hat{x}^{\prime}(u) + \tilde{x}^{\prime}(u)} \leq 1,$$
allows us to proceed with the estimates
\begin{equation}\label{estimate_for_u_hat_prime_minus_x_tilde_prime}
  \|\hat{x}^{\prime}(u) - \tilde{x}^{\prime}(u)\|_{n,0} \leq \|\hat{x}^{\prime}(u) - \tilde{x}^{\prime}(u)\|_{-1,1}(1+u_{b}E) + Kh^{2}
\end{equation}
$$+ E\sum\limits_{i=0}^{n-1}\bar{h}_{i+1}\|\hat{x}(u) - \tilde{x}(u)\|_{i,1},\; \forall n\in \mathbb{J}_{s},$$
\begin{equation}\label{estimate_hat_x_minus_tilde_x}
  \|\hat{x}(u) - \tilde{x}(u)\|_{n,0} \leq |\hat{x}(\tilde{u}(c)) - \tilde{x}(\tilde{u}(c))| + \sum\limits_{j=0}^{n}\bar{h}_{j}\|\hat{x}^{\prime}(u) - \tilde{x}^{\prime}(u)\|_{j, 0}
\end{equation}
$$\leq \|\hat{x}(u) - \tilde{x}(u)\|_{-1,1}\left(1+u_{b}(1+u_{b}E)\right) + u_{b}K h^{2} + E\sum\limits_{j=0}^{n}\bar{h}_{j}\sum\limits_{i=0}^{j-1}\bar{h}_{i+1}\|\hat{x}(u) - \tilde{x}(u)\|_{i,1} $$
$$\leq \|\hat{x}(u) - \tilde{x}(u)\|_{-1,1}\left(1+u_{b}(1+u_{b}E)\right) + u_{b}K h^{2} + u_{b}E\sum\limits_{i=0}^{n-1}\bar{h}_{i+1}\|\hat{x}(u) - \tilde{x}(u)\|_{i,1},\; \forall n\in \mathbb{J}_{s},$$
where
$$\bar{h}_{i} = \bar{u}_{i+1} - \bar{u}_{i},\; \|f(u)\|_{i,k} \stackrel{def}{=} \max\left\{|f^{(m)}(u)|\; : \; u\in [\bar{u}_{i}, \bar{u}_{i+1}],\, m\in\overline{0\ldots k}\right\},\; i \geq 0, $$
$$\|f(u)\|_{-1,k} \stackrel{def}{=} \max\left\{|f^{(m)}(\tilde{u}(c))|,\;m\in\overline{0\ldots k}\right\},$$
$$E = \frac{8}{3\left(\tilde{u}^{\prime}_{\ast}\right)^{3}}\left(L_{1} + \left(L_{2}\left(1+ 1/\tilde{u}^{\prime}_{\ast}\right) + L_{1}\right)u_{b}\right)\footnote{Here have we used an estimate $\frac{\left(\hat{x}^{\prime}(u)\tilde{x}^{\prime}(u)\right)^{2}}{\hat{x}^{\prime}(u) + \tilde{x}^{\prime}(u)} = \left(\hat{x}^{\prime}(u)\tilde{x}^{\prime}(u)\right)/\left(\frac{1}{\hat{x}^{\prime}(u)} + \frac{1}{\tilde{x}^{\prime}(u)}\right) \leq \frac{4}{3\left(\tilde{u}^{\prime}_{\ast}\right)^{3}},\; \forall u\geq \tilde{u}(c),$ which follows from inequality \eqref{estimate_for_u_hat_prime} and the fact that functions $\hat{x}(u),$ $\tilde{x}(u)$ are concave (see reresentations \eqref{x_hat_expression}, \eqref{x_tilde_expression} in the light of conditions \eqref{lema_1_positiveness_of_nonlinearity}, \eqref{positiveness_of_N_derivative_with_respect_to_x}).}$$
$$\geq 2\frac{\left(\hat{x}^{\prime}(u)\tilde{x}^{\prime}(u)\right)^{2}}{\hat{x}^{\prime}(u) + \tilde{x}^{\prime}(u)} |\beta(\mathbb{P}_{u}(\tilde{x}^{\prime}(u)), \mathbb{P}_{u}(\tilde{x}(u)), u) - \beta(\mathbb{P}_{u}(\hat{x}^{\prime}(u)), \mathbb{P}_{u}(\hat{x}(u)), u)|,\; \forall u\in [\tilde{u}(c), u_{b}],$$

$$K = \frac{4}{3\left(\tilde{u}^{\prime}_{\ast}\right)^{3}}\left(L_{2}\left(1+\frac{2}{\tilde{u}^{\prime}_{\ast}}\right)^{2} + L_{0}L_{1}\frac{8}{\left(\tilde{u}^{\prime}_{\ast}\right)^{3}}\right)u_{b} $$
$$\geq \frac{\left(\hat{x}^{\prime}(u)\tilde{x}^{\prime}(u)\right)^{2}}{\hat{x}^{\prime}(u) + \tilde{x}^{\prime}(u)}\left|\frac{d^{2}\mathcal{N}(u, \hat{x}(u))}{du^{2}}\right|u_{b},\; \forall u\in[\tilde{u}(c), u_{b}].$$
$$L_{k} = \max\left\{\left|\frac{d^{k}\mathcal{N}(u, x)}{du^{i}dx^{j}}\right|\; : \; i,j, \in \mathbb{N},\; i+j = k,\; u\in \mathbb{D}_{\varepsilon, u}, x\in \mathbb{D}_{\varepsilon, x}\right\}.$$

Let us consider an auxiliary sequence $\{\mu_{i}\}$ defined in the following way (see estimates \eqref{estimate_x_tilde_minus_x_hat_case_a}, \eqref{estimate_for_derivative_total})
$$\mu_{0} = Q_{1}\|\hat{x}(u) - \tilde{x}(u)\|_{-1,1} + Q_{2}K h^{2} \leq h^{2}\mu(\kappa_{1}),$$
$$\mu(\kappa_{1}) \stackrel{def}{=} Q_{1}\kappa_{1}\max\left\{\max\left\{\frac{4}{3\tilde{u}^{\prime}_{\ast}}, \frac{1}{\tilde{u}^{\prime}_{\ast\ast}}\right\}, \frac{2}{\left(\tilde{u}^{\prime}_{\ast}\right)^{2}}\left(1 + \hat{u}^{\prime\prime\ast}\max\left\{\frac{4}{3\tilde{u}^{\prime}_{\ast}}, \frac{1}{\tilde{u}^{\prime}_{\ast\ast}}\right\}\right)\right\} + Q_{2}K,$$
$$ Q_{1} = \max\{1+u_{b}E, 1+ u_{b}(1+u_{b}E)\},\; Q_{2} = \max\{1, u_{b}\},$$
$$\mu_{i} = (1+ Q_{2}Eh_{i})\mu_{i-1} = (1+ Q_{2}Eh_{i})^{i}\mu_{0} \leq h^{2}\kappa_{2},\; \forall i\in \mathbb{J}_{s},$$
where
\begin{equation}\label{kappa_2_definition}
  \kappa_{2} = \boldsymbol{\kappa}_{2}(\kappa_{1})\footnote{Here we want to emphasize the fact that $\kappa_{2}$ is a function of $\kappa_{1},$ i.e. estimate \eqref{approx_estimates_for_x_hat} depends on estimate \eqref{approx_estimates_for_u_hat}. This will be used later in the proof of Theorem \ref{Main_theorem_abour_apptoximation_prorerties_of_the_SI_method}. } = \exp\left(u_{b}Q_{2}E\right)\mu(\kappa_{1}).
\end{equation}
Comparing the definition of $\mu_{i}$ with estimates \eqref{estimate_for_u_hat_prime_minus_x_tilde_prime} and \eqref{estimate_hat_x_minus_tilde_x}, one can conclude that
\begin{equation}\label{estimate_for_x_hat_minus_x_tilde_norm_one}
  \|\hat{x}(u) - \tilde{x}(u)\|_{i, 1} \leq \mu_{i} \leq h^{2}\kappa_{2},\; \forall i\in \mathbb{J}_{s}.
\end{equation}
If $d_{1} = u_{b}$ then the proof is complete. Otherwise, requiring $h$ to be small enough to ensure inequality $$h^{2}\kappa_{2} \leq \frac{\varepsilon}{2},$$ and using the {\it Picard-Lindelof Theorem} again, we conclude that the solution $\hat{x}(u)$ exists at least on interval \eqref{initial_existance_interval_for_inverse_function} for $s=2,$ and, literally repeating all the reasoning above, we again come to estimate \eqref{estimate_for_x_hat_minus_x_tilde_norm_one} for $s=2$. Apparently, after a finite number of iterations we will achieve the equality $d_{s} = u_{b},$ which, apparently, ensures the existence of $\hat{x}(u)$ on $[\tilde{u}(c), u_{b}]$ as well as estimate \eqref{approx_estimates_for_x_hat}.
\end{proof}

Applying a technique similar to the one used in the proof above, one can prove a "symmetric" version of Theorem \ref{aux_theorem_about_approx_properties} stated below.
\begin{theorem}\label{aux_theorem_about_approx_properties_1}
Let the assumptions of Theorem \ref{aux_theorem_about_approx_properties} hold true. Then, for $h$ sufficiently small, there exists a function $\check{u}(x) = \check{u}(x, h) \in C^{2}([a,b]),$ which satisfies equation \eqref{Intro_Equation} subjected to initial conditions
\begin{equation}\label{init_conditions_for_u_check}
  \check{u}(b) = \tilde{x}^{-1}(b) = u_{b},\; \check{u}^{\prime}(b) = \frac{1}{\tilde{x}^{\prime}(u_{b}, h)}.
\end{equation}
and the following estimates hold true:
\begin{equation}\label{approx_estimates_for_u_check}
  \|\tilde{u}(x) - \check{u}(x)\|_{ [a, c], 1} = h^{2}\kappa_{3}, \; \|\tilde{x}(u) - \check{x}(u)\|_{[\tilde{u}(c), u_{b}], 1} = h^{2}\kappa_{4},
\end{equation}
where the constants $\kappa_{3}, \kappa_{4} > 0$ depend on BVP \eqref{Intro_Equation}, \eqref{Intro_boundary_conditions} only, $\check{x}(u) \stackrel{def}{=} \check{u}^{-1}(u).$
\end{theorem}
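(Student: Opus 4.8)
The plan is to mirror the three-part argument of Theorem~\ref{aux_theorem_about_approx_properties}, running the propagation in the opposite direction: where the original proof anchored the \emph{straight} solution at the left endpoint $x=a$ and propagated toward the inverse side, here I would anchor the \emph{inverse} solution at the right endpoint $u=u_{b}$ and propagate toward the straight side. Set $\check{x}(u)=\check{u}^{-1}(u)$; by the inverse relationship it solves the exact inverse equation \eqref{Intro_Equation_Inverse} with terminal data $\check{x}(u_{b})=b$, $\check{x}^{\prime}(u_{b})=1/\check{u}^{\prime}(b)=\tilde{x}^{\prime}(u_{b},h)$, which coincide \emph{exactly} with the data of $\tilde{x}$ at $u=u_{b}$ (recall $\tilde{x}(u_{b})=b$ from \eqref{discretized_equations_boundaryConditions}). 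Thus the initial error at $u_{b}$ is zero, and no matching transfer is needed to start the inverse-side estimate.

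\emph{Part 1 (inverse side).} With the $u$-variable now playing the role the $x$-variable played in Part~1 of the previous proof, I would reproduce the subinterval-by-subinterval Gronwall estimate \eqref{recursive_estimates_for_Z_i}--\eqref{h_squared_p_estimate} over the mesh $[\bar{u}_{i},\bar{u}_{i+1}]$, propagating from $u=u_{b}$ down to $u=\tilde{u}(c)$. Since $\tilde{x}$ and $\check{x}$ satisfy the discretized equation \eqref{discretized_equation_inverse} and the exact equation \eqref{Intro_Equation_Inverse} respectively with identical data at $u_{b}$, their difference solves a linear system driven by the $O(h^{2})$ consistency error of $\beta$ against $\mathcal{N}$; the recursion, controlled by the a priori bounds on $\tilde{x}^{\prime}$ coming from the concavity in \eqref{boundaries_for_x_tilde} together with \eqref{u_tilde_prime_c_bounded_from_above} and \eqref{restriction_on_tilde_u_prime_from_below}, yields the second estimate in \eqref{approx_estimates_for_u_check} and the existence of $\check{x}$ on $[\tilde{u}(c),u_{b}]$, and on a slightly larger interval $[\tilde{u}(c)-\delta^{\prime},u_{b}]$ with $\delta^{\prime}>0$ independent of $h$, so that $\check{u}=\check{x}^{-1}$ is well defined in a neighbourhood of $c$.

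\emph{Part 2 (matching point).} From the $O(h^{2})$ control of $\check{x}(\tilde{u}(c))-c$ and $\check{x}^{\prime}(\tilde{u}(c))-\tilde{x}^{\prime}(\tilde{u}(c))$ just obtained, I would transfer the error to the straight variables at $x=c$ by the right-triangle argument of \eqref{inequality_x_tilde_minus_x_hat_case_a}--\eqref{estimate_for_derivative_total} read in reverse: inverting $\check{x}$ gives $|\tilde{u}(c)-\check{u}(c)|$ and $|\tilde{u}^{\prime}(c)-\check{u}^{\prime}(c)|$ of order $h^{2}$. This step needs a strictly positive lower bound on $\check{u}^{\prime}(c)$ (equivalently an upper bound on $\check{x}^{\prime}(\tilde{u}(c))$), which I would obtain exactly as \eqref{estimate_for_u_hat_prime} was obtained, from the lower bound $\tilde{u}^{\prime}_{\ast}$ of Remark~\ref{remark_about_u_tilde_prime_c_bounded from below} and the $O(h^{2})$ closeness of $\check{u}^{\prime}$ to $\tilde{u}^{\prime}$, after shrinking $h$.

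\emph{Part 3 (straight side).} With $\check{u}(c),\check{u}^{\prime}(c)$ now within $O(h^{2})$ of $\tilde{u}(c),\tilde{u}^{\prime}(c)$, I would propagate the difference $\tilde{u}-\check{u}$ across the mesh $[x_{i-1},x_{i}]$ from $x=c$ back to $x=a$, using the recursion \eqref{recursive_estimates_for_Z_i}--\eqref{h_squared_p_estimate} with the subintervals traversed in reverse order; this simultaneously extends $\check{u}$ to all of $[a,c]$, hence to $[a,b]$, via the step-by-step Picard--Lindel\"of continuation employed in Part~1 of Theorem~\ref{aux_theorem_about_approx_properties}, and delivers the first estimate in \eqref{approx_estimates_for_u_check}. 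The main obstacle I anticipate is purely in the bookkeeping of constants: one must verify that the upper bound \eqref{u_tilde_prime_c_bounded_from_above}, the two-sided derivative bounds $\tilde{u}^{\prime}_{\ast},\tilde{u}^{\prime}_{\ast\ast},\tilde{u}^{\prime\ast}$, and the monotonicity and convexity/concavity of $\tilde{u}$ and $\tilde{x}$ still furnish $h$-independent sup and Lipschitz bounds when the domains are traversed backwards, and in particular that the matching-point lower bound on $\check{u}^{\prime}(c)$ survives the reversal; granting this, every remaining computation is a mirror image of the estimates already carried out for $\hat{u}$ and $\hat{x}$.
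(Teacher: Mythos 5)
Your proposal is correct and coincides with the paper's own approach: the paper proves this theorem with the single remark that it follows by ``applying a technique similar to the one used in the proof'' of Theorem \ref{aux_theorem_about_approx_properties}, and your three-part mirrored argument --- inverse side anchored at $u=u_{b}$, where the data of $\check{x}$ and $\tilde{x}$ coincide exactly since $\tilde{x}(u_{b})=b$, then error transfer at the matching point, then propagation along the straight side back to $x=a$ --- is precisely that symmetric technique, worked out in more detail than the paper supplies. The one reversal issue you flag is real but benign: the upper bound on $\check{x}^{\prime}$ cannot come from concavity when moving downward from $u_{b}$, yet it follows by bootstrapping from $O(h^{2})$ closeness to $\tilde{x}^{\prime}$ (whose $h$-independent bound $\tilde{x}^{\prime}(u)\leq 1/\tilde{u}^{\prime}(c)\leq 1/\tilde{u}^{\prime}_{\ast}$ does come from concavity anchored at the matching point), exactly as the continuation-with-$\varepsilon$-neighbourhoods scheme of Parts 1 and 3 of the paper's proof already operates.
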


 Now we are in a position to prove a theorem about approximation properties of the SI-method with respect to the solution of BVP \eqref{Intro_Equation}, \eqref{Intro_boundary_conditions}. A similar statement was formulated in \cite{Makarov_Dragunov_2019} (see Propositions 1 and 2) without a proof.

\begin{theorem}\label{Main_theorem_abour_apptoximation_prorerties_of_the_SI_method}
Let condition \eqref{Intro_Nonlin_cond} as well as the assumptions of Theorem \ref{aux_theorem_about_approx_properties} hold true. Then, for $h$ \eqref{definition_of_h} sufficiently small, the following estimates hold true:
\begin{equation}\label{asymptotic_estimate_u_minus_u_tilde}
  \|u(x) - \tilde{u}(x)\|_{[a,c],1} \leq \kappa_{S}h^{2},
\end{equation}
\begin{equation}\label{asymptotic_estimate_x_minus_x_tilde}
  \|x(u) - \tilde{x}(u)\|_{[\tilde{u}(c), u_{b}], 1} \leq \kappa_{I}h^{2},
\end{equation}
where $u(x)$ is the solution to BVP \eqref{Intro_Equation}, \eqref{Intro_boundary_conditions}, $x(\cdot) = u^{-1}(\cdot)$ and constants $\kappa_{S}, \kappa_{I}$ depend on BVP \eqref{Intro_Equation}, \eqref{Intro_boundary_conditions} only.
\begin{proof}
It is easy to see, that under the assumptions of the theorem, the results of Theorems \ref{aux_theorem_about_approx_properties}, \ref{aux_theorem_about_approx_properties_1} are also valid.

If we regard solution $u(x)$ as a function of the boundary condition at point $x = a,$ i.e., $u(x) = u(x, u(a)),$ then, by the definition of function $\check{u}(x),$ introduced in Theorem \ref{aux_theorem_about_approx_properties_1}, we have that
 $$\check{u}(x) = u(x, \check{u}(a)).$$
 From Theorem \ref{aux_theorem_about_approx_properties_1} and the {\it Theorem about differentiability of solutions of BVPs with respect to boundary conditions} (see \cite[Theorem 1]{Vidossich_Giovanni_2001_differentiability}) it follows that (provided that $h$ is sufficiently small)
 \begin{equation}\label{estimate_check_u_minus_u}
   \|\check{u}(x) - u(x)\|_{[a,c],1} = \|u(x, \check{u}(a)) - u(x,0)\|_{[a,c],1}\leq \rho_{\varepsilon}\kappa_{3}h^{2},
 \end{equation}
 where
 $$\rho_{\varepsilon} = \max\left\{|u^{\prime}_{r}(x, r)|, |u^{\prime\prime}_{rx}(x, r)|\; :\; x\in[a,c],\;r\in [-\varepsilon, \varepsilon] \right\}, \varepsilon = \varepsilon(h) = \kappa_{3}h^{2}.$$

Let us get an estimate from above for the value of $\rho_{\varepsilon}.$

According to Theorem 1 from \cite{Vidossich_Giovanni_2001_differentiability}, function $u^{\prime}_{r}(x, r)$ is the solution to the boundary value problem
\begin{equation}\label{boundary_value_problem_for_derivarive_with_respect_to_r}
  v^{\prime\prime}(x) = \mathcal{N}^{\prime}_{u}(u(x, r),x)v(x),\; v(a) = 1,\; v(b) = 0.
\end{equation}
Condition \eqref{Intro_Nonlin_cond} guarantees that $\mathcal{N}^{\prime}_{u}(u(x, r),x) \geq 0,$ which allows us to apply the {\it maximum principle} (see, for example, Theorem 3 from \cite[p. 6]{Protter_Weinberger_Max_principle}) to the solution of problem \eqref{boundary_value_problem_for_derivarive_with_respect_to_r} and conclude that $u^{\prime}_{r}(x, r),$ as a function of $x,$ is decreasing on $[a, b]$ and thus
\begin{equation}\label{estimate_for_derivarive_with_respect_to_r}
  0\leq u^{\prime}_{r}(x, r) \leq 1, \; \forall x \in [a,b],\; \forall r\in \mathbb{R}.
\end{equation}
Integrating both sides of equation \eqref{boundary_value_problem_for_derivarive_with_respect_to_r} twice with respect to $x$ (with $v(x)= u^{\prime}_{r}(x,r)$), we get the inequality
$$u^{\prime}_{r}(x,r) = 1 + u^{\prime\prime}_{rx}(a,r)(x-a) + \int\limits_{a}^{x}\int\limits_{a}^{\eta}\mathcal{N}^{\prime}_{u}(u(\xi,r), \xi)d\xi d\eta \geq 0,\; \forall x\in[a,c], $$
which allows us to estimate $u^{\prime\prime}_{rx}(x,r) \leq 0$ from below as follows
\begin{equation}\label{estimate_for_derivarive_with_respect_to_r_and_x}
  0 \geq u^{\prime\prime}_{rx}(x,r) \geq u^{\prime\prime}_{rx}(a,r) \geq -\frac{1 + \int\limits_{a}^{x}\int\limits_{a}^{\eta}\mathcal{N}^{\prime}_{u}(u(\xi,r), \xi)d\xi d\eta}{c-a}.
\end{equation}
From \eqref{estimate_for_derivarive_with_respect_to_r_and_x}, using inequalities \eqref{estimate_for_derivarive_with_respect_to_r}, we get
\begin{equation}\label{estimate_for_derivarive_with_respect_to_r_and_x_1}
\max\limits_{\overset{x\in[a,c]}{r\in [-\varepsilon, \varepsilon]}}|u^{\prime\prime}_{rx}(x, r)| \leq \frac{1}{c-a} + (c-a)\max\limits_{\overset{x\in[a,c]}{u\in [-\varepsilon, u_{b} + \varepsilon]}}|\mathcal{N}^{\prime}_{u}(u,x)|,
\end{equation}
since
$$\max\limits_{r\in[-\varepsilon, \varepsilon]}|u(x,r) - u(x)| = \max\limits_{r\in[-\varepsilon, \varepsilon]}|u(x,r) - u(x, 0)| \leq \varepsilon \max\limits_{r\in[-\varepsilon, \varepsilon]}|u^{\prime}_{r}(x,r)| \leq \varepsilon,\; \forall x\in[a, c]$$
and $0 \leq u(x) \leq u_{b}.$

Finally, from inequalities \eqref{estimate_for_derivarive_with_respect_to_r} and \eqref{estimate_for_derivarive_with_respect_to_r_and_x_1}, we get the estimate
$$\rho_{\varepsilon} \leq \max\left\{1, \frac{1}{c-a} + (c-a)\max\limits_{\overset{x\in[a,c]}{u\in [-\varepsilon, u_{b} + \varepsilon]}}|\mathcal{N}^{\prime}_{u}(u,x)|\right\}.$$

Combining the result of Theorem \ref{aux_theorem_about_approx_properties_1} with estimate \eqref{estimate_check_u_minus_u} we get
 \begin{equation}\label{estimate_u_tilde_minus_u}
   \|\tilde{u}(x) - u(x)\|_{[a,c],1} \leq \|\tilde{u}(x) - \check{u}(x)\|_{[a,c],1} + \|\check{u}(x) - u(x)\|_{[a,c],1} \leq \kappa_{3}h^{2}(1+\rho_{\varepsilon}),
 \end{equation}
 which yields inequality \eqref{asymptotic_estimate_u_minus_u_tilde} with $\kappa_{S} = \kappa_{3}(1+\rho_{\varepsilon}).$

 Now with inequality \eqref{estimate_u_tilde_minus_u} at our hands, we can literally repeat all the reasoning done in parts 2 and 3 of the proof of Theorem
 \ref{aux_theorem_about_approx_properties} (tightening restriction on $h,$ if required) and get the estimate (see \eqref{estimate_for_x_hat_minus_x_tilde_norm_one})
 $$\|\tilde{x}(u) - x(u)\|_{[\tilde{u}(c),u_{b}],1} \leq h^{2}\boldsymbol{\kappa}_{2}(\kappa_{3}(1+\rho_{\varepsilon})),$$
 where function $\boldsymbol{\kappa}_{2}(\cdot)$ is defined in \eqref{kappa_2_definition}, which ensures inequality \eqref{asymptotic_estimate_x_minus_x_tilde} with $\kappa_{I} = \boldsymbol{\kappa}_{2}(\kappa_{3}(1+\rho_{\varepsilon})).$

 This completes the proof.
\end{proof}

\end{theorem}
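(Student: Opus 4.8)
The plan is to bridge the computed pair $\tilde{u},\tilde{x}$ to the \emph{exact} solution $u$ (which satisfies \emph{both} boundary conditions) through the auxiliary function $\check{u}$ of Theorem~\ref{aux_theorem_about_approx_properties_1}, exploiting the fact that $\check{u}$ already respects the right boundary condition. First I would verify that the hypotheses of Theorems~\ref{aux_theorem_about_approx_properties} and \ref{aux_theorem_about_approx_properties_1} are met under \eqref{Intro_Nonlin_cond} and the standing assumptions, so that $\check{u}\in C^{2}([a,b])$ exists with $\|\tilde{u}-\check{u}\|_{[a,c],1}=\kappa_{3}h^{2}$ and $\check{u}(b)=u_{b}$. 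The key structural observation is that, since $\check{u}$ solves \eqref{Intro_Equation} and satisfies $\check{u}(b)=u_{b}$, uniqueness of the two-point problem forces $\check{u}(x)=u(x,\check{u}(a))$, where $u(x,r)$ denotes the solution of \eqref{Intro_Equation} subject to $u(a)=r$, $u(b)=u_{b}$, while the genuine solution is $u(x)=u(x,0)$. Because $\tilde{u}(a)=0$, the estimate of Theorem~\ref{aux_theorem_about_approx_properties_1} gives $|\check{u}(a)|=|\check{u}(a)-\tilde{u}(a)|\le\kappa_{3}h^{2}$, so the gap $\check{u}-u$ is a pure boundary-data perturbation of size $O(h^{2})$.

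Next I would quantify this perturbation. Invoking the differentiability of $u(\cdot,r)$ with respect to $r$ (Theorem~1 of \cite{Vidossich_Giovanni_2001_differentiability}) together with the mean value theorem yields $\|\check{u}-u\|_{[a,c],1}=\|u(\cdot,\check{u}(a))-u(\cdot,0)\|_{[a,c],1}\le\rho_{\varepsilon}\,\kappa_{3}h^{2}$ with $\varepsilon=\kappa_{3}h^{2}$, where $\rho_{\varepsilon}$ collects the suprema of $|u'_{r}|$ and $|u''_{rx}|$ over $x\in[a,c]$, $r\in[-\varepsilon,\varepsilon]$. A triangle inequality $\|\tilde{u}-u\|\le\|\tilde{u}-\check{u}\|+\|\check{u}-u\|$ then delivers \eqref{asymptotic_estimate_u_minus_u_tilde} with $\kappa_{S}=\kappa_{3}(1+\rho_{\varepsilon})$, provided $\rho_{\varepsilon}$ stays bounded as $h\to 0$.

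That uniform bound on $\rho_{\varepsilon}$ is where I expect the real work to lie. The sensitivity $v(x)=u'_{r}(x,r)$ solves the variational problem $v''=\mathcal{N}'_{u}(u(x,r),x)\,v$, $v(a)=1$, $v(b)=0$, and here condition \eqref{Intro_Nonlin_cond} is decisive: since $\mathcal{N}'_{u}\ge 0$, the maximum principle (\cite[p.~6]{Protter_Weinberger_Max_principle}) forces $v$ to be monotone with $0\le u'_{r}\le 1$, and integrating the variational equation twice bounds $|u''_{rx}|$ by $\frac{1}{c-a}+(c-a)\max|\mathcal{N}'_{u}|$ uniformly in $r$. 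Thus $\rho_{\varepsilon}$ is controlled by the data of BVP \eqref{Intro_Equation}, \eqref{Intro_boundary_conditions} alone, independently of the mesh; this is the crux, because a bound degenerating as the layer sharpens or as $h\to 0$ would ruin the $O(h^{2})$ rate.

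Finally, to obtain the inverse estimate \eqref{asymptotic_estimate_x_minus_x_tilde} I would not start afresh. With $\|\tilde{u}-u\|_{[a,c],1}\le\kappa_{S}h^{2}$ in hand, the discrepancy at the matching abscissa $u=\tilde{u}(c)$ is again $O(h^{2})$, and I would propagate it through the $u$-variable exactly as in Parts~2 and 3 of the proof of Theorem~\ref{aux_theorem_about_approx_properties} (the Gronwall-type recursion leading to \eqref{estimate_for_x_hat_minus_x_tilde_norm_one}), merely feeding in $\kappa_{S}$ in place of the input constant $\kappa_{1}$. This yields \eqref{asymptotic_estimate_x_minus_x_tilde} with $\kappa_{I}=\boldsymbol{\kappa}_{2}(\kappa_{S})$, where $\boldsymbol{\kappa}_{2}(\cdot)$ is the function from \eqref{kappa_2_definition}, completing the argument.
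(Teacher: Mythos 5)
Your proposal is correct and follows essentially the same route as the paper's proof: identifying $\check{u}(x)=u(x,\check{u}(a))$, invoking Vidossich's differentiability theorem with the maximum-principle bound on the variational problem to control $\rho_{\varepsilon}$, applying the triangle inequality to get $\kappa_{S}=\kappa_{3}(1+\rho_{\varepsilon})$, and then rerunning Parts 2 and 3 of Theorem \ref{aux_theorem_about_approx_properties} with $\kappa_{S}$ in place of $\kappa_{1}$ to obtain $\kappa_{I}=\boldsymbol{\kappa}_{2}(\kappa_{S})$.
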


\section{Implementation aspects of the SI-method}\label{section_implementation_aspects}
In the current section we would like to discuss some technical details of the SI-method's implementation which is freely available at the public repository \linebreak \url{https://github.com/imathsoft/MathSoftDevelopment}. What follows is not the only possible way how the SI-method can be implemented in practice but rather an attempt to share our experience in that area by giving some guide lines.

\subsection{Step functions}

   To describe the SI-method's implementation we need to introduce a concept of { \it step functions.} In the current paper we define the step functions in a slightly different way as compared to how they were defined in \cite{Makarov_Dragunov_2019} while still keeping the same notation. The new definition better fits into the theoretical framework presented in the current paper.

    Throughout this section we will refer to $U(x) = U(A, B, C, D, x)$ as the {\it straight step function} and define it to be the solution to IVP
    \begin{equation}\label{SI_method_definition_of_U}
      U^{\prime\prime}(s) = \left(As + B\right)U(s), \;\; U(0) = D,\; U^{\prime}(0) = C,
    \end{equation}
    whereas function $V(s) = V(\bar{A}, \bar{B}, \bar{C}, \bar{D}, s),$ satisfying the nonlinear IVP
    \begin{equation}\label{SI_method_definition_of_V}
      V^{\prime\prime}(s) = \left(\bar{A}s + \bar{B}\right)\left(V^{\prime}(s)\right)^{3}, \;\; V(0) = \bar{D}, \; V^{\prime}(0) = \bar{C},
    \end{equation}
    will be referred to as the {\it inverse step function}.
    It is easy to see that functions $\tilde{u}(x)$ and $\tilde{x}(u),$ satisfying equations \eqref{discretized_equation_straight} and \eqref{discretized_equation_inverse} respectively, can be expressed through the step functions in the following way
    \begin{equation}\label{u_tilde_through_step_function}
      \tilde{u}(x) = U\left(N^{\prime}_{u}(\tilde{u}(x_{i}), x_{i})\tilde{u}^{\prime}(x_{i}) + N^{\prime}_{x}(\tilde{u}(x_{i}), x_{i}), N(\tilde{u}(x_{i}), x_{i}), \tilde{u}^{\prime}(x_{i}), \tilde{u}(x_{i}), x-x_{i}\right),
    \end{equation}
    $$x\in [x_{i}, x_{i+1}], \; i\in \overline{0, N_{1}-1},$$
    \begin{equation}\label{x_tilde_through_step_function}
      \tilde{x}(u) = V\left(-\mathcal{N}^{\prime}_{u}(\bar{u}_{i}, \tilde{x}(\bar{u}_{i})) - \mathcal{N}^{\prime}_{x}(\bar{u}_{i}, \tilde{x}(\bar{u}_{i}))\tilde{x}^{\prime}(\bar{u}_{i}), - \mathcal{N}(\bar{u}_{i}, \tilde{x}(\bar{u}_{i})), \tilde{x}^{\prime}(\bar{u}_{i}), \tilde{x}(\bar{u}_{i}), u - \bar{u}_{i}\right),
    \end{equation}
    $$u\in [\bar{u}_{i}, \bar{u}_{i+1}],\; i\in \overline{0, N_{2}-1}.$$
    Notice, that equalities \eqref{u_tilde_through_step_function}, \eqref{x_tilde_through_step_function} require functions $U(s)$ and $V(s)$ to be approximated for rather small values of their arguments, i.e. $0 \leq s < h.$ Such approximations can be efficiently constructed via the Tailor series expansions (see, for example, \cite{Hairer_Wanner_Non_Stiff}).

\subsection{System of nonlinear equations}
Equalities \eqref{u_tilde_through_step_function}, \eqref{x_tilde_through_step_function} allow us to reduce the system of differential equations with boundary and matching conditions \eqref{discretized_equation_straight} -- \eqref{matching_conditions_u} to a system of nonlinear equations with respect to unknown values $\tilde{u}(x_{i}),$ $\tilde{x}(\bar{u}_{i}).$ The latter system can be solved by some iteration technique, e.g. the Newton's method (see, for example, \cite[Section 2.3]{Ascher_1988}). In our implementation, to approximate partial derivatives of the step functions with respect to parameters $A,\ldots, D,$ $\bar{A},\ldots, \bar{D},$ which are required by the Newton's method, we use the method of {\it algorithmic differentiation} (AD) (see, for example, \cite{Griewank_algorithmic_differentiation}). The AD is easy to implement, it provides enough flexibility for possible experiments (one can re-define the step functions without caring too much about the evaluation of their derivatives) and shows quite good performance, especially if the step functions are evaluated through the Tailor series expansions.

The general approach for building and solving the nonlinear system with respect to values $\tilde{u}(x_{i}),$ $\tilde{x}(\bar{u}_{i})$ (when solution $u(x)$ is not necessary monotone and convex) is quite thoroughly described in \cite[Section 3]{Makarov_Dragunov_2019}. Notice that application of an iteration technique for solving the system could cause a "mesh drifting" near the matching point $x=c,$ when the distance between two successive values of $x_{i}$ or $\bar{u}_{i}$ becomes greater than the maximal allowed step $h$ \eqref{definition_of_h}. The issue can be solved by applying a "mesh refinement" procedure consisting in adding extra mesh points to fill the "gaps"; this process is rather straightforward and is also described in \cite{Makarov_Dragunov_2019}.

\subsection{Initial guess, mesh selection and choice of point $c\in (a, b)$}
The questions about how to choose point $c$ and meshes $\{x_{i}\},$ $\{u_{i}\},$ as well as how to construct an initial guess for solving the nonlinear system, mentioned above, can be answered simultaneously in scope of the {\it single shooting } procedure described in \cite[Section 3]{Makarov_Dragunov_2019}.

The general idea of the single shooting technique, in its simplest form (see \cite[pp. 132 -- 134]{Ascher_1988}), consists in a gradual approximation of the unknown tangent $\tilde{u}^{\prime}(a)$ based on the results of shooting, that is, the results of solving the corresponding IVP with trial initial conditions. Despite its drawbacks, the  technique can be successfully applied to the boundary value problem \eqref{discretized_equation_straight} -- \eqref{matching_conditions_u}.

Let us fix some maximal discretization step size $h$ \eqref{definition_of_h} and pick some trial tangent value $\tilde{u}^{\prime}(x_{0}),$ $x_{0}=a.$ For the given input data, formula \eqref{u_tilde_through_step_function} allows us to "move" left-to-right and successively calculate values $\tilde{u}(x_{i}), \tilde{u}^{\prime}(x_{i}),$ where $x_{i} = x_{i-1}+h.$ Doing so, on some iteration, we can get $x_{i+1} \geq b$ whereas $x_{i} < b.$ In this case, we set $N_{1} = i+1,$ $x_{N_{1}} = b$ and, depending on how close $u(x_{N_{1}})$ and $u_{b}$ are, we choose different trial tangent value and start the shooting over or stop the process. A more probable scenario, however, provided that solution $u(x)$ has a boundary layer near the right end of interval $[a, b],$ is when for some iteration $j$ we find that the computational cost of evaluating $\tilde{u}(x_{j+1})$ becomes unacceptably high, i.e., the Tailor series, which we use to approximate function $U(x_{j+1}),$ converge extremely slowly (most probably, because of their coefficients having relatively high absolute values and $|\tilde{u}^{\prime}(x_{j})| \gg 1$). In the other words, we face the stiffness. In this case, and here is where the idea of the SI-method comes into the play, we say that point $x_{j}$ is "critical" in the sense that staring from it we cannot "move" left-to-right anymore. We put $c = x_{j},$ transform values $\tilde{u}(c), \tilde{u}^{\prime}(c)$ into $\tilde{x}(\bar{u}_{0}), \tilde{x}^{\prime}(\bar{u}_{0})$ (where $\bar{u}_{0} = \tilde{u}(c)$) using matching equalities \eqref{matching_conditions_u} and proceed by "moving" vertically (bottom-to-top or vise versa, depending on the sign of $\tilde{u}(c)$) using formula \eqref{x_tilde_through_step_function} until we cross the horizontal line $u=u_{b}.$ Depending on where the line was crossed (to the left or to the right from point $x=b$), we adjust the initial tangent and shoot again until the desired accuracy ($|\tilde{x}(u_{b}) - b| \leq \varepsilon$) is achieved.

We do not expect that the single shooting process will provide us precise approximations of functions $\tilde{u}(x),$ $\tilde{x}(u),$ which, otherwise, would be rather inefficient. Instead, we want to get some initial guess for the Newton's method (which is much more efficient once converge) mentioned above in this section. Besides that, the shooting procedure automatically yields us the meshes $\{x_{i}\},$ $\{\bar{u}_{i}\}$ and the "critical" point $c\in (a,b),$ which reflects the maximal "amount" of stiffness we are able to withstand.

Notice that in practice, the criteria of choosing a "critical" point $c$ can be expressed through some maximal acceptable (critical) value $u^{\prime}_{crit.}$ which should not be exceeded by $|\tilde{u}^{\prime}(x)|,$ i.e.:
\begin{equation}\label{criteria_to_calculate_c}
  c = \{x_{i} \; | \; |\tilde{u}^{\prime}(x_{i})| \geq u^{\prime}_{crit.}, \; |\tilde{u}^{\prime}(x_{i-1})| < u^{\prime}_{crit.}\}.
\end{equation}

\section{Numerical examples.}\label{section_numerical_examples}
\subsection{Example 1.} We would like to start with the Troesch's problem \cite{Troesch1976279} (also known as \verb"bvpT23" \cite{Cash_Algo_927})
\begin{equation}\label{Troesch_problem}
  u^{\prime\prime}(x) = \lambda\sinh(\lambda u(x)),\; u(0) = 0,\; u(1) = 1,\; x\in [0,1].
\end{equation}
As it can be easily verified, problem \eqref{Troesch_problem} satisfies conditions \eqref{Intro_Nonlin_cond}, \eqref{lema_1_positiveness_of_nonlinearity}, \eqref{positiveness_of_N_derivative_with_respect_to_x} which means that the results of all the statements proved in the present paper are applicable to the Troesch's problem. The problem was used in \cite{Makarov_Dragunov_2019}, to demonstrate remarkably good accuracy and performance qualities of the SI-method.

This time we want to use the Troesch's problem to examine the results of Theorem \ref{Main_theorem_abour_apptoximation_prorerties_of_the_SI_method}, evaluating constants $\kappa_{S}$ \eqref{asymptotic_estimate_u_minus_u_tilde} and  $\kappa_{I}$ \eqref{asymptotic_estimate_x_minus_x_tilde}. Apparently, in order to do that we need to be able to evaluate the "reference" solution $u(x)$ of problem \eqref{Troesch_problem} by a method (other than the SI-method) which is "trustable" enough and can approximate the solution with an a-priori given accuracy. Our suggestion is to use one of the "standard" numerical BVP solvers from the computing environment {\bf Maple 2016}. Unfortunately, the latter can barely handle the Troesch's problem for $\lambda > 8,$ because of the stiffness. This, however, can be overcome by using the transformation approach proposed in \cite{Chang20103303}, \cite{Chang20103043} and the homotopy approach from \cite{Gen_sol_of_TP}.

Applying the transformation of the unknown solution (see \cite{Chang20103303})
\begin{equation}\label{Transformation_for_Troesch_equation}
  u(x) = \frac{4}{\lambda}\tanh^{-1}(y(x)),
\end{equation}
to problem \eqref{Troesch_problem} we get a significantly less stiff boundary value problem
\begin{equation}\label{Troesch_problem_transformed}
 \left(1-\left(y(x)\right)^{2}\right)y^{\prime\prime}(x)  + 2y(x)\left(y^{\prime}(x)\right)^{2} - \lambda^{2}y(x)\left(1+\left(y(x)\right)^{2}\right) = 0,
\end{equation}
$$y(0) = 0,\; y(1) = \tanh(\lambda/4).$$
Then, introducing a continuation parameter $t$ (see \cite{Gen_sol_of_TP}) we get the "perturbed" problem
\begin{equation}\label{Troesch_problem_transformed_perturbed}
 (1-t)(v^{\prime\prime} - \lambda^{2}v) + t\left(\left(1-v^{2}\right)v^{\prime\prime}(x)  + 2v\left(v^{\prime}(x)\right)^{2} - \lambda^{2}v\left(1+v^{2}\right)\right) = 0,
\end{equation}
$$v = v(t,x),\;  v(t,0) = 0,\; v(t, 1) = \tanh(\lambda/4), \forall t\in [0,1],\; v(1,x) = y(x).$$
The perturbed problem \eqref{Troesch_problem_transformed_perturbed} can be successfully solved by the {\bf Maple 2016} numerical BVP solvers even for sufficiently large values of $\lambda$ (50 and higher)\footnote{We mean calling the subroutine {\bf dsolve} for the problem \eqref{Troesch_problem_transformed_perturbed} with parameters {\bf numeric} and {\bf continuation = t.} For sufficiently small values of parameter {\bf abserr}, one would also need to increase the value of parameter {\bf maxmesh} setting it to $10^{4}$ or higher.}.

Once solution $y(x)$ to problem \eqref{Troesch_problem_transformed} is found, functions $u(x)$ and $u^{\prime}(x)$ can be evaluated using formulas \eqref{Transformation_for_Troesch_equation} and
\begin{equation}\label{Transformation_for_Troesch_equation_derivative}
  u^{\prime}(x) = \frac{4y^{\prime}(x)}{\lambda\left(1-\left(y(x)\right)^{2}\right)}
\end{equation}
respectively.

To evaluate functions $x^{\prime}(u)$ and $x(u)$ we use formulas (compare with \eqref{x_hat_expression})
\begin{equation}\label{Formulas_to_evaluate_inverse_solurions_Troesch_problem}
  x^{\prime}(u) = \left(\left(u^{\prime}(c)\right)^{2} + 2 \left(\cosh(\lambda u) - \cosh(\lambda u(c))\right)\right)^{-\frac{1}{2}},\; x(u) = c+  \int\limits_{u(c)}^{u}x^{\prime}(\eta) d\eta,
\end{equation}
as well as the subroutines for numerical integration available in {\bf Maple 2016}.

\begin{table}
\begin{tabular}{|c|c|c|c|c|c|c|c|c|c|}
  \hline
  $\lambda$ & $c$ & $\tilde{u}(c)$ & $\tilde{u}^{\prime}(c)$ & $N_{1}$ & $N_{2}$& $\kappa^{(0)}_{S}$ & $\kappa^{(1)}_{S}$ & $\kappa^{(0)}_{I}$ & $\kappa_{I}^{(1)}$ \\
  \hline
  1 & 0.589777 & 0.528283 & 1.000001 & 8294 & 6475 & 5.6816e-3 & 1.39645e-2 & 5.68258e-3 & 1.08227e-2 \\
  5 & 0.744141 & 0.192366 & 1.000382 & 8448 & 9195 & 0.19299 & 0.644688 & 0.192962 & 0.432883 \\
  10 & 0.856993 & 9.62509e-2 & 1.000096 & 9557 & 10099 & 0.39043 & 2.62544 & 0.390579 & 1.74209 \\
  15 & 0.903851 & 6.41811e-2 & 1.000327 & 10027 & 10399 & 0.58572 & 5.91021 & 0.585951 & 3.91965 \\
  20 & 0.927852 & 4.81500e-2 & 1.000644 & 10261 & 10550 & 0.78101 & 10.5116 & 0.781248 & 6.96665 \\
  30 & 0.951925 & 3.21270e-2 & 1.001550 & 10504 & 10669 & 1.39351 & 25.8834 & 1.35289 & 19.1854 \\
  \hline
\end{tabular}
\caption{Example 1. Results of numerical experiments for different values of $\lambda$ and $h=10^{-4}.$ }\label{Troesch_table_1}
\end{table}

Table \ref{Troesch_table_1} contains experimental data calculated for different values of parameter $\lambda.$ In all of the cases the "critical" point $c$ was chosen according to formula \eqref{criteria_to_calculate_c} with $u^{\prime}_{crit.} = 1.$ The four rightmost columns of the table contain the values calculated according to formulas:
\begin{eqnarray}\label{Example_1_def_of_kappa_func}
\kappa^{(k)}_{S} = \|\kappa^{(k)}_{S}(x)\|_{[0, c]} = h^{-2} \|u^{(k)}(x) - \tilde{u}^{(k)}(x)\|_{[0, c]},\; k=0,1,\\
\kappa^{(k)}_{I} = \|\kappa^{(k)}_{I}(u)\|_{[\tilde{u}(c), 1]} = h^{-2} \|x^{(k)}(u) - \tilde{x}^{(k)}(u)\|_{[\tilde{u}(c), 1]},\; k=0,1,\nonumber
\end{eqnarray}
where $h = 10^{-4}.$

As we can conclude from the table, the values of $\kappa^{(k)}_{S}$ and $\kappa^{(k)}_{I}$ increase as $\lambda$ increases. This tendency, however, does not hold true for all the values of functions $\kappa^{(k)}_{S}(x)$ and $\kappa^{(k)}_{I}(u)$ on their domains, as it can be seen from Fig. \ref{Example_1_graph_kappa_S}, \ref{Example_1_graph_kappa_S_rel}, \ref{Example_1_graph_kappa_I_rel}. The functions reach their maximums at points $c$ and $\tilde{u}(c)$ respectively and the maximums do increase as parameter $\lambda$ increases. For functions $\kappa^{(k)}_{S}(x),$ $k=0,1$ the behaviour quickly changes to an opposite as we move from point $x=c$ towards the left end of interval $[0,1].$ The same is true for functions $\kappa^{(k)}_{I}(u),$ $k=0,1,$ --- they decrease towards zero on $[\tilde{u}(c), u_{b}],$ with the speed inversely proportional to $\lambda.$ In the other words, the latter means that the accuracy of the SI-method applied to the Troesch's problem decreases near the critical point $x=c$ ($u=\tilde{u}(c)$) and increases near the point $x=0$ ($u=1$) as the problem's stiffness (i.e. parameter $\lambda$) increases.

\begin{figure}[h!]
\centering
\begin{subfigure}[b]{0.47\textwidth}
\begin{tikzpicture}
\begin{axis}[ xlabel={$x$}, ylabel={$\kappa^{(0)}_{S}(x)$}, ymode = log, legend pos=south east ]
\addplot coordinates { (0.1, 1.10710e-11) (0.2, 2.22918e-10) (0.3, 4.47745e-09) (0.4, 8.99321e-08) (0.5, 1.80633e-06) (0.6, 3.62812e-05) (0.7, 0.000728727) (0.8, 0.0146366) (0.9, 0.291483) (0.951854, 1.17176) };
\addplot coordinates { (0.1, 5.88555e-08) (0.2, 4.42852e-07) (0.3, 3.27333e-06) (0.4, 2.41870e-05) (0.5, 0.000178719) (0.6, 0.00132056) (0.7, 0.00975752) (0.8, 0.0720158) (0.9, 0.501698) (0.927781, 0.781005) };
\addplot coordinates { (0.1, 3.84229e-06) (0.2, 1.80773e-05) (0.3, 8.12081e-05) (0.4, 0.000363992) (0.5, 0.00163131) (0.6, 0.00731087) (0.7, 0.0327516) (0.8, 0.145594) (0.9, 0.563241) (0.90378, 0.585724) };
\addplot coordinates { (0.1, 0.000207222) (0.2, 0.000639522) (0.3, 0.00176644) (0.4, 0.00481191) (0.5, 0.0130822) (0.6, 0.0355273) (0.7, 0.0958794) (0.8, 0.247855) (0.856922, 0.39043) };
\addplot coordinates { (0.1, 0.00580936) (0.2, 0.0130954) (0.3, 0.0236939) (0.4, 0.0402026) (0.5, 0.0663688) (0.6, 0.106922) (0.7, 0.164355) (0.744071, 0.19299) };
\legend{$\lambda = 30$, $\lambda = 20$, $\lambda = 15$, $\lambda = 10$, $\lambda = 5$}
\end{axis}
\end{tikzpicture}
\end{subfigure}
\begin{subfigure}[b]{0.47\textwidth}
\begin{tikzpicture}
\begin{axis}[ xlabel={$x$}, ylabel={$\kappa^{(1)}_{S}(x)$}, ymode = log, legend pos=south east ]
\addplot coordinates { (0, 3.31537e-11) (0.1, 3.33780e-10) (0.2, 6.68762e-09) (0.3, 1.34324e-07) (0.4, 2.69796e-06) (0.5, 5.41900e-05) (0.6, 0.00108844) (0.7, 0.0218618) (0.8, 0.439078) (0.9, 8.59636) (0.951854, 23.6634) };
\addplot coordinates {(0, 3.24553e-07) (0.1, 1.22103e-06) (0.2, 8.86298e-06) (0.3, 6.54675e-05) (0.4, 0.00048374) (0.5, 0.00357438) (0.6, 0.0264113) (0.7, 0.195142) (0.8, 1.43693) (0.9, 8.86684) (0.927781, 10.5116) };
\addplot coordinates { (0, 2.70676e-05) (0.1, 6.36740e-05) (0.2, 0.000272507) (0.3, 0.00121842) (0.4, 0.00545995) (0.5, 0.0244696) (0.6, 0.109658) (0.7, 0.490849) (0.8, 2.14699) (0.9, 5.96668) (0.90378, 5.91021) };
\addplot coordinates { (0, 0.00176329) (0.1, 0.0027209) (0.2, 0.00663385) (0.3, 0.0177521) (0.4, 0.0481494) (0.5, 0.130793) (0.6, 0.354464) (0.7, 0.942965) (0.8, 2.19703) (0.856922, 2.62544) };
\addplot coordinates { (0, 0.0557479) (0.1, 0.062842) (0.2, 0.0858715) (0.3, 0.130355) (0.4, 0.206048) (0.5, 0.325522) (0.6, 0.491265) (0.7, 0.642304) (0.744071, 0.644688) };
\legend {$\lambda = 30$, $\lambda = 20$, $\lambda = 15$, $\lambda = 10$, $\lambda = 5$}
\end{axis}
\end{tikzpicture}
\end{subfigure}
\caption{Example 1. Graphs of functions $\kappa^{(k)}_{S}(x),$ $k=0,1$ \eqref{Example_1_def_of_kappa_func} that correspond to different values of parameter $\lambda;$ $h=10^{-4}$} \label{Example_1_graph_kappa_S}
\end{figure}

\begin{figure}[h!]
\centering
\begin{subfigure}[b]{0.47\textwidth}
\begin{tikzpicture}
\begin{axis}[ xlabel={$x$}, ylabel={$\kappa^{(0)}_{S}(x)/x$}, legend pos=south west ]
\addplot coordinates { (0.1, 44.287) (0.2, 44.287) (0.3, 44.287) (0.4, 44.287) (0.5, 44.287) (0.6, 44.287) (0.7, 44.287) (0.8, 44.286) (0.9, 43.8731) (0.951854, 36.5538) };
\addplot coordinates { (0.1, 19.6845) (0.2, 19.6845) (0.3, 19.6845) (0.4, 19.6845) (0.5, 19.6845) (0.6, 19.6845) (0.7, 19.6841) (0.8, 19.6592) (0.9, 18.4233) (0.927781, 16.2442) };
\addplot coordinates { (0.1, 11.0728) (0.2, 11.0728) (0.3, 11.0728) (0.4, 11.0728) (0.5, 11.0728) (0.6, 11.0725) (0.7, 11.0675) (0.8, 10.9694) (0.9, 9.31743) (0.90378, 9.1362) };
\addplot coordinates { (0.1, 4.92076) (0.2, 4.92076) (0.3, 4.92074) (0.4, 4.92064) (0.5, 4.91992) (0.6, 4.91459) (0.7, 4.87587) (0.8, 4.61294) (0.856922, 4.05937) };
\addplot coordinates { (0.1, 1.21839) (0.2, 1.21779) (0.3, 1.21599) (0.4, 1.2109) (0.5, 1.19718) (0.6, 1.16164) (0.7, 1.07308) (0.744071, 1.00361) };
\legend{$\lambda = 30$, $\lambda = 20$, $\lambda = 15$, $\lambda = 10$, $\lambda = 5$}
\end{axis}
\end{tikzpicture}
\end{subfigure}
\begin{subfigure}[b]{0.47\textwidth}
\begin{tikzpicture}
\begin{axis}[ xlabel={$x$}, ylabel={$\kappa^{(1)}_{S}(x)/x$}, legend pos=south west ]
\addplot coordinates { (0, 44.287) (0.1, 44.287) (0.2, 44.287) (0.3, 44.287) (0.4, 44.287) (0.5, 44.287) (0.6, 44.287) (0.7, 44.287) (0.8, 44.2839) (0.9, 43.0586) (0.951854, 23.6833) };
\addplot coordinates { (0, 19.6845) (0.1, 19.6845) (0.2, 19.6845) (0.3, 19.6845) (0.4, 19.6845) (0.5, 19.6845) (0.6, 19.6845) (0.7, 19.6831) (0.8, 19.6087) (0.9, 16.0809) (0.927781, 10.5215) };
\addplot coordinates { (0, 11.0728) (0.1, 11.0728) (0.2, 11.0728) (0.3, 11.0728) (0.4, 11.0728) (0.5, 11.0727) (0.6, 11.072) (0.7, 11.0571) (0.8, 10.7661) (0.9, 6.36013) (0.90378, 5.91531) };
\addplot coordinates { (0, 4.92076) (0.1, 4.92076) (0.2, 4.92075) (0.3, 4.92071) (0.4, 4.92042) (0.5, 4.91825) (0.6, 4.90228) (0.7, 4.78764) (0.8, 4.04022) (0.856922, 2.62727) };
\addplot coordinates { (0, 1.21852) (0.1, 1.21811) (0.2, 1.21627) (0.3, 1.2107) (0.4, 1.19534) (0.5, 1.1551) (0.6, 1.05299) (0.7, 0.81719) (0.744071, 0.644696) };
\legend{$\lambda=30$, $\lambda = 20$, $\lambda = 15$, $\lambda = 10$, $\lambda = 5$}
\end{axis}
\end{tikzpicture}
\end{subfigure}
\caption{Example 1. Graphs of functions $\kappa^{(k)}_{S}(x)/x,$ $k=0,1$ \eqref{Example_1_def_of_kappa_func} that correspond to different values of parameter $\lambda;$ $h = 10^{-4}$}\label{Example_1_graph_kappa_S_rel}
\end{figure}

\begin{figure}[h!]
\centering
\begin{subfigure}[b]{0.47\textwidth}
\begin{tikzpicture}
\begin{axis}[ xlabel={$u$}, ylabel={$\kappa_{I}(u)/u$}, ymode = log, legend pos=south west ]
\addplot coordinates { (0.032127, 1.2308) (0.10008, 0.121863) (0.200003, 0.00249063) (0.300004, 4.12388e-05) (0.400004, 6.10410e-07) (0.500004, 8.47560e-09) (0.600004, 1.12988e-10) (0.700004, 1.46440e-12) (0.800004, 1.85896e-14) (0.900004, 2.29497e-16) };
\addplot coordinates { (0.04815, 0.841997) (0.100036, 0.28312) (0.200092, 0.0229724) (0.300099, 0.00165775) (0.4001, 0.000109312) (0.5001, 6.79188e-06) (0.6001, 4.05583e-07) (0.7001, 2.35508e-08) (0.8001, 1.33604e-09) (0.9001, 7.09001e-11) };
\addplot coordinates { (0.0641811, 0.648283) (0.100015, 0.390258) (0.200013, 0.0619784) (0.300028, 0.00906869) (0.400031, 0.00124736) (0.500031, 0.000163246) (0.600032, 2.05955e-05) (0.700032, 2.52655e-06) (0.800032, 3.00501e-07) (0.900032, 3.18942e-08) };
\addplot coordinates { (0.0962509, 0.455755) (0.100052, 0.445138) (0.200087, 0.147113) (0.300038, 0.041887) (0.400052, 0.0116382) (0.500056, 0.00314911) (0.600058, 0.000829399) (0.700059, 0.000211981) (0.800059, 5.13379e-05) (0.900059, 1.03147e-05) };
\addplot coordinates { (0.192366, 0.259308) (0.20002, 0.251762) (0.300081, 0.14394) (0.400055, 0.0754622) (0.500086, 0.0387809) (0.600005, 0.0195802) (0.700014, 0.00949563) (0.800019, 0.0042055) (0.900022, 0.00143895) };
\legend{$\lambda = 30$, $\lambda = 20$, $\lambda = 15$, $\lambda = 10$, $\lambda = 5$}
\end{axis}
\end{tikzpicture}
\end{subfigure}
\begin{subfigure}[b]{0.47\textwidth}
\begin{tikzpicture}
\begin{axis}[ xlabel={$u$}, ylabel={$\kappa^{(1)}_{I}(u)/u$}, ymode = log, legend pos=south west ]
\addplot coordinates { (0.032127, 15.699) (0.10008, 19.1053) (0.200003, 1.99578) (0.300004, 0.154583) (0.400004, 0.0104653) (0.500004, 0.000658985) (0.600004, 3.96772e-05) (0.700004, 2.31736e-06) (0.800004, 1.32400e-07) (0.900004, 7.43942e-09) (1, 4.12632e-10) };
\addplot coordinates { (0.04815, 6.97114) (0.100036, 15.2889) (0.200092, 4.20939) (0.300099, 0.884947) (0.4001, 0.164011) (0.5001, 0.0282349) (0.6001, 0.00463973) (0.7001, 0.00073873) (0.8001, 0.000114973) (0.9001, 1.75896e-05) (1, 2.66021e-06) };
\addplot coordinates { (0.0641811, 3.92093) (0.100015, 9.85572) (0.200013, 4.77661) (0.300028, 1.63076) (0.400031, 0.4982) (0.500031, 0.14192) (0.600032, 0.0385885) (0.700032, 0.0101587) (0.800032, 0.00261246) (0.900032, 0.000660079) (1, 0.000164568) };
\addplot coordinates { (0.0962509, 1.74226) (0.100052, 2.25349) (0.200087, 3.81838) (0.300038, 2.12165) (0.400052, 1.05246) (0.500056, 0.492667) (0.600058, 0.221307) (0.700059, 0.0963991) (0.800059, 0.0410196) (0.900059, 0.0171417) (1, 0.00706547) };
\addplot coordinates { (0.192366, 0.433049) (0.20002, 0.560441) (0.300081, 1.08876) (0.400055, 0.951236) (0.500086, 0.726318) (0.600005, 0.529363) (0.700014, 0.376331) (0.800019, 0.26283) (0.900022, 0.180955) (1, 0.123103) };
\legend{$\lambda = 30$, $\lambda = 20$, $\lambda = 15$, $\lambda = 10$, $\lambda = 5$}
\end{axis}
\end{tikzpicture}
\end{subfigure}
\caption{Example 1. Graphs of functions $\kappa^{(k)}_{I}(u)/u,$ $k=0,1$ \eqref{Example_1_def_of_kappa_func} that correspond to different values of parameter $\lambda;$ $h = 10^{-4}$}
\label{Example_1_graph_kappa_I_rel}
\end{figure}
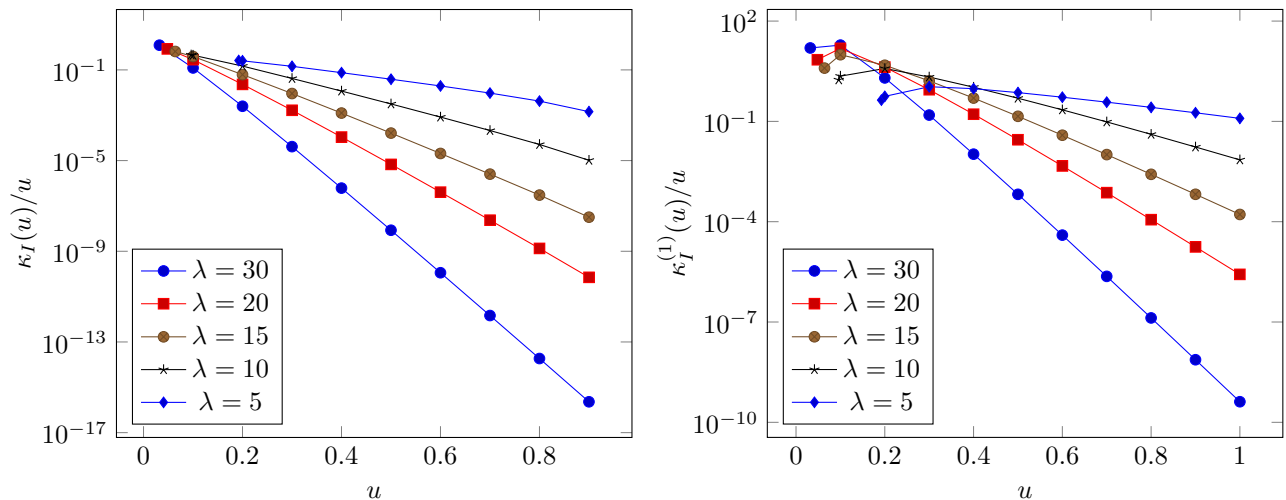

It is worthwhile to mention, that the graphs of quotients $\kappa^{(k)}_{S}(x)/x,$ $k=0,1$ depicted on Fig. \ref{Example_1_graph_kappa_S_rel}, clearly show that, despite the increase of accuracy of the SI-method near point $x=0,$ the overall number of significant digits that the method can provide us when approximating functions $u(x),$ $u^{\prime}(x)$ actually decreases as the Troesch's problem becomes stiffer.

Finally, to check the conclusion of Theorem \ref{Main_theorem_abour_apptoximation_prorerties_of_the_SI_method} about the approximation order of the SI-method (presented in the current paper) with respect to $h$ \eqref{definition_of_h} we need to demonstrate that functions $\kappa^{(k)}_{S}(x) = \kappa^{(k)}_{S}(x, h)$ and $\kappa^{(k)}_{I}(x) = \kappa^{(k)}_{I}(x, h)$ are bounded for $h$ sufficiently small. This is done by means of Fig. \ref{figure_kappa_S_bounded}, \ref{figure_kappa_I_bounded} exhibiting  graphs of functions
\begin{equation}\label{function_to_show_that_kappas_are_bounded}
  \frac{|\kappa^{(k)}_{S}(x, h) - \kappa^{(k)}_{S}(x, 10^{-6})|}{x}, \; \frac{|\kappa^{(k)}_{I}(u, h) - \kappa^{(k)}_{I}(u, 10^{-6})|}{u}\footnote{The "trick" with subtracting $\kappa^{(k)}_{S}(x, 10^{-6})$ and $\kappa^{(k)}_{I}(u, 10^{-6})$ serves the only purpose to make the corresponding graphs more distinguishable on the figures.}
\end{equation}
for different values of $h$ and for $\lambda = 30,$ $k=0,1.$ The graphs clearly indicate that functions \eqref{function_to_show_that_kappas_are_bounded} converge uniformly as $h$ tends to 0, which imply their uniform boundedness for sufficiently small values of $h.$

\begin{figure}[h!]
\centering
\begin{subfigure}[b]{0.47\textwidth}
\begin{tikzpicture}
\begin{axis}[ xlabel={$x$}, ylabel={}, legend pos=south west, ymode = log   ]
\addplot coordinates { (0.1, 52.1581-43.9604) (0.2, 52.1581-43.9604) (0.3, 52.1581-43.9604) (0.4, 52.1581-43.9604) (0.5, 52.1581-43.9604) (0.6, 52.1581-43.9604) (0.7, 52.1581-43.9604) (0.8, 52.1574-43.9594) (0.9, 51.8704-43.5578) };
\addplot coordinates { (0.1, 44.287-43.9604) (0.2, 44.287-43.9604) (0.3, 44.287-43.9604) (0.4, 44.287-43.9604) (0.5, 44.287-43.9604) (0.6, 44.287-43.9604) (0.7, 44.287-43.9604) (0.8, 44.286-43.9594) (0.9, 43.8731-43.5578)};
\addplot coordinates { (0.1, 44.287-44.2739) (0.2, 44.287-44.2739) (0.3, 44.287-44.2739) (0.4, 44.287-44.2739) (0.5, 44.287-44.2739) (0.6, 44.287-44.2739) (0.7, 44.287-44.2739) (0.8, 44.286-44.2728) (0.9, 43.8731-43.8587)};
\addplot coordinates { (0.1, 44.2769-44.2739) (0.2, 44.2769-44.2739) (0.3, 44.2769-44.2739) (0.4, 44.2769-44.2739) (0.5, 44.2769-44.2739) (0.6, 44.2769-44.2739) (0.7, 44.2769-44.2739) (0.8, 44.2758-44.2728) (0.9, 43.8615-43.8587)};
\legend{$h=10^{-2}$, $h=10^{-3}$, $h=10^{-4}$, $h=10^{-5}$, $h=10^{-6}$}
\end{axis}
\end{tikzpicture}
\end{subfigure}
\begin{subfigure}[b]{0.47\textwidth}
\begin{tikzpicture}
\begin{axis}[ xlabel={$x$}, ylabel={}, legend pos=south west, ymode = log ]
\addplot coordinates { (0, 52.1581-43.9604) (0.1, 52.1581-43.9604) (0.2, 52.1581-43.9604) (0.3, 52.1581-43.9604) (0.4, 52.1581-43.9604) (0.5, 52.1581-43.9604) (0.6, 52.1581-43.9604) (0.7, 52.1581-43.9604) (0.8, 52.1555-43.9573) (0.9, 51.0808-42.7485) };
\addplot coordinates { (0, 44.287-43.9604) (0.1, 44.287-43.9604) (0.2, 44.287-43.9604) (0.3, 44.287-43.9604) (0.4, 44.287-43.9604) (0.5, 44.287-43.9604) (0.6, 44.287-43.9604) (0.7, 44.287-43.9604) (0.8, 44.2839-43.9573) (0.9, 43.0586-42.7485) };
\addplot coordinates { (0, 44.287-44.2739) (0.1, 44.287-44.2739) (0.2, 44.287-44.2739) (0.3, 44.287-44.2739) (0.4, 44.287-44.2739) (0.5, 44.287-44.2739) (0.6, 44.287-44.2739) (0.7, 44.287-44.2738) (0.8, 44.2839-44.2707) (0.9, 43.0586-43.0435)};
\addplot coordinates { (0, 44.2769-44.2739) (0.1, 44.2769-44.2739) (0.2, 44.2769-44.2739) (0.3, 44.2769-44.2739) (0.4, 44.2769-44.2739) (0.5, 44.2769-44.2739) (0.6, 44.2769-44.2739) (0.7, 44.2769-44.2738) (0.8, 44.2737-44.2707) (0.9, 43.0463-43.0435)};
\legend{$h=10^{-2}$, $h=10^{-3}$, $h=10^{-4}$, $h=10^{-5}$, $h=10^{-6}$}
\end{axis}
\end{tikzpicture}
\end{subfigure}
\caption{Example 1. Graphs of functions $|\kappa^{(k)}_{S}(x, h) - \kappa^{(k)}_{S}(x, 10^{-6})|/x$ for different values of $h$ \eqref{definition_of_h}, $\lambda = 30$ (left figure: $k = 0;$ right figure: $k = 1$).}\label{figure_kappa_S_bounded}
\end{figure}
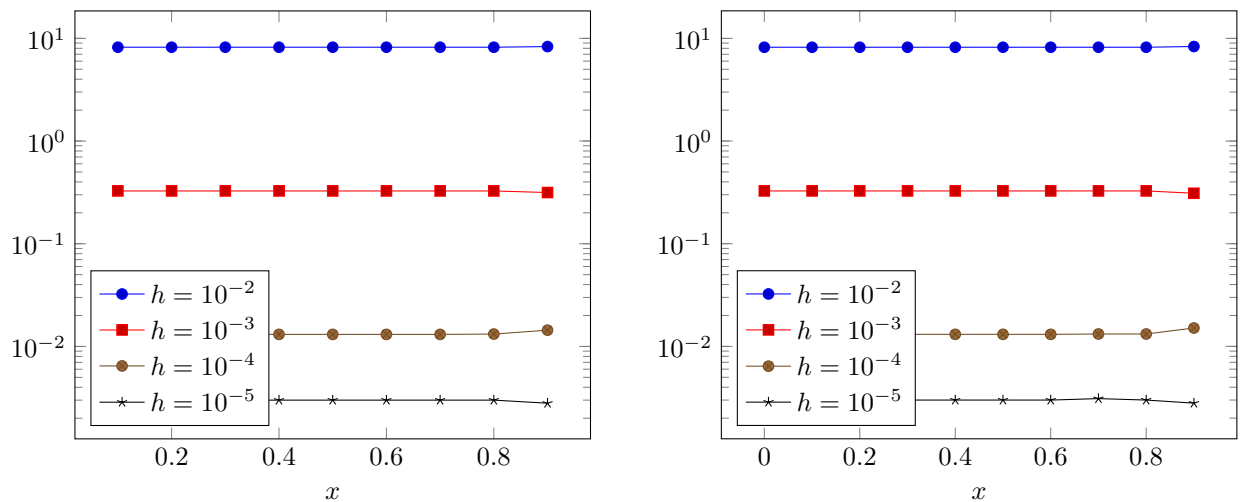

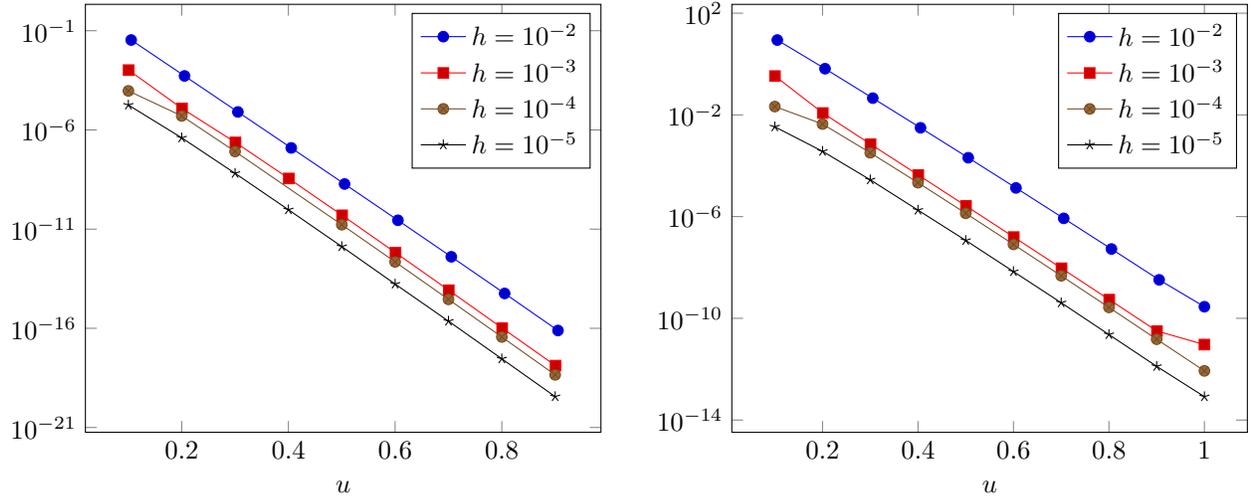
\begin{figure}[h!]
\centering
\begin{subfigure}[b]{0.47\textwidth}
\begin{tikzpicture}
\begin{axis}[ xlabel={$u$}, ylabel={}, ymode = log ]
\addplot coordinates {(0.105266, 0.157039-0.122901) (0.205266, 0.0030062-0.00247859) (0.305266, 4.90757e-05-4.10034e-05) (0.405266, 7.31000e-07-6.06825e-07) (0.505266, 1.03076e-08-8.42583e-09) (0.605266, 1.40216e-10-1.12333e-10) (0.705266, 1.85982e-12-1.45609e-12) (0.805266, 2.42070e-14-1.84867e-14) (0.905266, 3.05744e-16-2.28184e-16) };

\addplot coordinates {(0.100363, 0.122901-0.121863) (0.200618, 0.00249063-0.00247859) (0.30063, 4.12388e-05-4.10034e-05) (0.400631, 6.10410e-07-6.06825e-07) (0.500631, 8.47560e-09-8.42583e-09) (0.600631, 1.12988e-10-1.12333e-10) (0.700631, 1.46440e-12-1.45609e-12) (0.800631, 1.85896e-14-1.84867e-14) (0.900631, 2.29497e-16-2.28184e-16) };

\addplot coordinates { (0.10008, 0.121956-0.121863) (0.200003, 0.00249063-0.00248555) (0.300004, 4.12388e-05-4.11569e-05) (0.400004, 6.09204e-07-6.10410e-07) (0.500004, 8.47560e-09-8.45890e-09) (0.600004, 1.12988e-10-1.12765e-10) (0.700004, 1.46440e-12-1.46152e-12) (0.800004, 1.85896e-14-1.85530e-14) (0.900004, 2.29497e-16-2.29046e-16) };

\addplot coordinates { (0.100002, 0.121956-0.121938) (0.200002, 0.00248555-0.00248515) (0.300002, 4.11569e-05-4.11504e-05) (0.400002, 6.09204e-07-6.09109e-07) (0.500002, 8.45890e-09-8.45758e-09) (0.600002, 1.12765e-10-1.12748e-10) (0.700002, 1.46152e-12-1.46129e-12) (0.800002, 1.85530e-14-1.85501e-14) (0.900002, 2.29046e-16-2.29010e-16) };

\legend{$h=10^{-2}$, $h=10^{-3}$, $h=10^{-4}$, $h=10^{-5}$, $h=10^{-6}$}
\end{axis}
\end{tikzpicture}
\end{subfigure}
\begin{subfigure}[b]{0.47\textwidth}
\begin{tikzpicture}
\begin{axis}[ xlabel={$u$}, ylabel={}, ymode = log ]
\addplot coordinates { (0.105266, 28.231-19.443) (0.205266, 2.66327-2.00765) (0.305266, 0.200917-0.155289) (0.405266, 0.0136185-0.0105086) (0.505266, 0.000868521-0.00066161) (0.605266, 5.32749e-05-3.98345e-05) (0.705266, 3.18088e-06-2.32668e-06) (0.805266, 1.86202e-07-1.32945e-07) (0.905266, 1.07365e-08-7.47101e-09) (1, 7.08059e-10-4.21907e-10)};
\addplot coordinates { (0.100363, 19.443-19.1053) (0.200618, 2.00765-1.99578) (0.30063, 0.155289-0.154583) (0.400631, 0.0105086-0.0104653) (0.500631, 0.00066161-0.000658985) (0.600631, 3.98345e-05-3.96772e-05) (0.700631, 2.32668e-06-2.31736e-06) (0.800631, 1.32945e-07-1.32400e-07) (0.900631, 7.47101e-09-7.43942e-09) (1, 4.21907e-10-4.12632e-10)};
\addplot coordinates { (0.10008, 19.1053-19.084) (0.200003, 1.99578-1.99141) (0.300004, 0.154583-0.154259) (0.400004, 0.0104653-0.0104437) (0.500004, 0.000658985-0.000657635) (0.600004, 3.96772e-05-3.95963e-05) (0.700004, 2.31736e-06-2.31266e-06) (0.800004, 1.32400e-07-1.32131e-07) (0.900004, 7.43942e-09-7.42436e-09) (1, 4.12632e-10-4.11776e-10) };
\addplot coordinates { (0.100002, 19.084-19.0806) (0.200002, 1.99141-1.99104) (0.300002, 0.154259-0.154231) (0.400002, 0.0104437-0.0104419) (0.500002, 0.000657635-0.000657519) (0.600002, 3.95963e-05-3.95894e-05) (0.700002, 2.31266e-06-2.31225e-06) (0.800002, 1.32131e-07-1.32108e-07) (0.900002, 7.42436e-09-7.42307e-09) (1, 4.11776e-10-4.11693e-10) };
\legend{$h=10^{-2}$, $h=10^{-3}$, $h=10^{-4}$, $h=10^{-5}$, $h=10^{-6}$}
\end{axis}
\end{tikzpicture}
\end{subfigure}
\caption{Example 1. Graphs of functions $|\kappa^{(k)}_{I}(u, h) - \kappa^{(k)}_{I}(u, 10^{-6})|/u$ for different values of $h$ \eqref{definition_of_h}, $\lambda = 30$ (left figure: $k = 0;$ right figure: $k = 1$).}\label{figure_kappa_I_bounded}
\end{figure}

\subsection{Example 2.} As the second example we want to examine the SI-method with a problem that actually does not fit into the pattern \eqref{Intro_Equation}, \eqref{Intro_boundary_conditions}:
\begin{equation}\label{bvp_t_21}
  \xi u^{\prime\prime}(x) = (u(x) + 1)u(x) - \exp\left(-2x/\sqrt{\xi}\right),\; x\in [0,1].
\end{equation}
$$u(0) = 1,\; u(1) = \exp\left(-1/\sqrt{\xi}\right),$$

\begin{figure}[h!]
\centering
\begin{tikzpicture}
\begin{axis}[ xlabel={$x$}, ylabel={$u(x)$},legend pos=north east ]
\addplot [domain=0:1, samples=100, line width = 1pt, blue] {exp(-x/sqrt(0.1))};
\addplot [domain=0:1, samples=100, line width = 1pt, green] {exp(-x/sqrt(0.01))};
\addplot [domain=0:1, samples=100, line width = 1pt, yellow] {exp(-x/sqrt(0.001))};
\addplot [domain=0:1, samples=300, line width = 1pt, orange] {exp(-x/sqrt(0.0001))};
\addplot [domain=0:1, samples=600, line width = 1pt, red] {exp(-x/sqrt(0.00001))};
\legend{$\xi = 10^{-1}$, $\xi = 10^{-2}$, $\xi = 10^{-3}$, $\xi = 10^{-4}$, $\xi = 10^{-5}$}
\end{axis}
\end{tikzpicture}

\caption{Example 2. Graphs of solution $u(x)$ \eqref{example_2_bvp_t_21_solution} to problem \eqref{bvp_t_21} for different values of parameter $\xi.$}\label{fig_bvp_t_solution_graphs}
\end{figure}
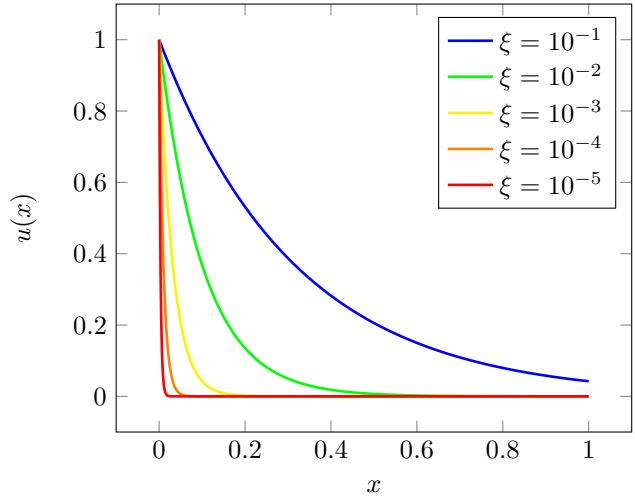
The problem is also known as \verb"bvpT21" \cite{Cash_Algo_927}. It has an explicit-form solution:
\begin{equation}\label{example_2_bvp_t_21_solution}
  u(x) = \exp\left(-x/\sqrt{\xi}\right).
\end{equation}

The graphs on Fig. \ref{fig_bvp_t_solution_graphs} show that for small values of parameter $\xi > 0$ problem \eqref{bvp_t_21} becomes stiff, which, in this particular case, amounts to its solution having a boundary layer near point $x = 0.$

The general idea of the SI-method suggests that for the case of \verb"bvpT21" the "hybrid" equations \eqref{c_family_problem_straight}, \eqref{c_family_problem_inverse} should be rewritten as
$$\mathfrak{u}^{\prime\prime}(x) = \mathcal{N}(\mathfrak{u}(x), x), \; x\in [c, 1],\; \mathfrak{u}(1) = \exp\left(-1/\sqrt{\xi}\right),\; \mathfrak{u}(c) \neq 1,$$
$$\mathfrak{x}^{\prime\prime}(x) = -\mathcal{N}(u, \mathfrak{x}(u))\left(\mathfrak{x}^{\prime}(u)\right)^{3}, \; u\in [\mathfrak{u}(c), 1],\; \mathfrak{x}(1) = 0,$$
and the "critical" point $c = c(\xi)$ should be chosen so that interval $[0, c]$ contains the boundary layer of solution $u(x) = u(x, \xi).$ From the SI-method's point of view, there is almost no difference between the Troesch's problem and \verb"bvpT21" if the latter is considered with respect to a new independent variable $t = 1-x.$

The non-uniformity of problem \eqref{bvp_t_21} does require some changes in the SI-method's implementation as compared to what was described in Section \ref{section_implementation_aspects}. In order to ensure the second order of approximation with respect to $h$ we need to substitute the uniform equations \eqref{discretized_equation_straight} with the corresponding non-uniform ones. The latter automatically entails the necessity to re-define the {\it straight step function } \eqref{SI_method_definition_of_U} as  $U(s) = U(s, A, B, C, D, E, F)$ satisfying the IVP
 $$ U^{\prime\prime}(s) = \left(As + B\right)U(s) + Es+F, \;\; U(0) = D,\; U^{\prime}(0) = C.$$

\begin{table}
\begin{tabular}{|c|c|c|c|c|c|c|c|c|c|}
  \hline
  $\xi$ & $c$ & $\tilde{u}(c)$ & $\tilde{u}^{\prime}(c)$ & $N_{S}$ & $N_{I}$& $\kappa^{(0)}_{S}$ & $\kappa^{(1)}_{S}$ & $\kappa^{(0)}_{I}$ & $\kappa_{I}^{(1)}$ \\
  \hline
   7.5e-2 & 0.354623 & 0.273924 & -1.000232 & 7098 & 8203 & 0.070653 & 0.671826 & 0.080263 & 0.413812 \\
   1e-2 & 0.230238 & 0.100021 & -1.000206 & 7985 & 9461 & 0.397041 & 5.281615 & 0.402318 & 1.841309 \\
   1e-3 & 0.109198 & 0.031646 & -1.000726 & 9065 & 9901 & 1.609073 & 55.31288 & 1.613678 & 22.08847 \\
   1e-4 & 0.046048 & 0.010003 & -1.000370 & 9607 & 10042 & 5.408764 & 554.6440 & 5.447916 & 232.6789 \\
   1e-5 & 0.018149 & 3.21756e-3 & -1.017484 & 10106 & 10084 & 17.90946 & 5708.525 & 18.00913 & 2409.302 \\
  \hline
\end{tabular}
\caption{Example 2. Results of numerical experiments for different values of $\xi$ and $h=10^{-4}.$ }\label{table_bvpT21}
\end{table}

\begin{figure}[h!]
\centering
\begin{subfigure}[b]{0.47\textwidth}
\begin{tikzpicture}
\begin{axis}[ xlabel={$x$}, ylabel={$\kappa^{(0)}_{S}(x)$}, ymode = log, legend pos=south west ]
\addplot coordinates { (0.9, 2.511e-4) (0.8, 7.761e-4) (0.7, 2.153e-3) (0.6, 5.937e-3) (0.5, 1.666e-2) (0.4, 4.915e-2) (0.3, 1.608e-1) (0.23, 3.97e-1)};
\addplot coordinates { (0.9, 1.98e-11) (0.8, 4.688e-10) (0.7, 1.107e-8) (0.6, 2.616e-7) (0.5, 6.181e-6) (0.4, 1.46e-4) (0.3, 3.45e-3) (0.2, 8.21e-2) (0.109, 1.609)};
\addplot coordinates { (0.9, 4.363e-37) (0.8, 9.605e-33) (0.7, 2.114e-28) (0.6, 4.655e-24) (0.5, 1.024e-19) (0.4, 2.256e-15) (0.3, 4.953e-11) (0.2, 1.090e-6) (0.1, 0.0240) (0.0461, 5.408)};
\addplot coordinates { (0.9, 1.448e-120) (0.8, 7.654e-107) (0.7, 4.174e-93) (0.6, 2.251e-79) (0.5, 1.228e-65) (0.4, 6.698e-52) (0.3, 3.653e-38) (0.2, 1.970e-24) (0.1, 1.074e-10) (0.0182, 17.909)};
\legend {$\xi = 10^{-2}$, $\xi = 10^{-3}$, $\xi = 10^{-4}$, $\xi = 10^{-5}$}
\end{axis}
\end{tikzpicture}
\end{subfigure}
\begin{subfigure}[b]{0.47\textwidth}
\begin{tikzpicture}
\begin{axis}[ xlabel={$x$}, ylabel={$\kappa^{(1)}_{S}(x)$}, ymode = log, legend pos=south west ]
\addplot coordinates { (1, 2.135e-3) (0.9, 3.299e-3) (0.8, 8.071e-3) (0.7, 2.179e-2) (0.6, 6.052e-2) (0.5, 1.749e-1) (0.4, 5.519e-1) (0.3, 2.017) (0.23, 5.281)};
\addplot coordinates { (1, 5.312e-11) (0.9, 6.286e-10) (0.8, 1.482e-8) (0.7, 3.502e-7) (0.6, 8.274e-6) (0.5, 1.954e-4) (0.4, 4.618e-3) (0.3, 0.1091) (0.2, 2.617) (0.109, 55.31)};
\addplot coordinates { (1, 3.924e-39) (0.9, 4.363e-35) (0.8, 9.605e-31) (0.7, 2.114e-26) (0.6, 4.655e-22) (0.5, 1.024e-17) (0.4, 2.256e-13) (0.3, 4.953e-9) (0.2, 1.09e-4) (0.1, 2.401) (0.0461, 554.64)};
\addplot coordinates { (1, 1.646e-131) (0.9, 4.579e-118) (0.8, 2.420e-104) (0.7, 1.320e-90) (0.6, 7.119e-77) (0.5, 3.883e-63) (0.4, 2.118e-49) (0.3, 1.155e-35) (0.2, 6.231e-22) (0.1, 3.398e-8) (0.0182, 5708.525)};
\legend {$\xi = 10^{-2}$, $\xi = 10^{-3}$, $\xi = 10^{-4}$, $\xi = 10^{-5}$}
\end{axis}
\end{tikzpicture}
\end{subfigure}
\caption{Example 2. Graphs of functions $\kappa^{(k)}_{S}(x),$ $k=0,1$ that correspond to different values of parameter $\xi;$ $h=10^{-4}.$} \label{Example_2_graph_kappa_S}
\end{figure}

\begin{figure}[h!]
\centering
\begin{subfigure}[b]{0.47\textwidth}
\begin{tikzpicture}
\begin{axis}[ xlabel={$x$}, ylabel={$\kappa^{(0)}_{I}(x)$}, ymode = log, legend pos=north east ]
\addplot coordinates { (0.1, 3.973e-1) (0.2, 2.612e-1) (0.3, 1.474e-1) (0.4, 8.840e-2) (0.5, 5.511e-2) (0.6, 3.471e-2) (0.7, 2.133e-2) (0.8, 1.201e-2) (0.9, 5.199e-3)};
\addplot coordinates { (0.0316, 1.611) (0.1, 0.587) (0.2, 0.194) (0.3, 0.0908) (0.4, 0.0495) (0.5, 0.0291) (0.6, 0.0176) (0.7, 0.0105) (0.8, 5.816e-3) (0.9, 2.477e-3)};
\addplot coordinates { (0.01, 5.447) (0.1, 0.346) (0.2, 0.0967) (0.3, 0.0422) (0.4, 0.0222) (0.5, 0.0127) (0.6, 7.589e-3) (0.7, 4.474e-3) (0.8, 2.442e-3) (0.9, 1.032e-3)};
\addplot coordinates { (0.00321, 18.0) (0.1, 0.162) (0.2, 0.0419) (0.3, 0.0177) (0.4, 9.117e-3) (0.5, 5.162e-3) (0.6, 3.042e-3) (0.7, 1.779e-3) (0.8, 9.654e-4) (0.9, 4.060e-4)};
\legend {$\xi = 10^{-2}$, $\xi = 10^{-3}$, $\xi = 10^{-4}$, $\xi = 10^{-5}$}
\end{axis}
\end{tikzpicture}
\end{subfigure}
\begin{subfigure}[b]{0.47\textwidth}
\begin{tikzpicture}
\begin{axis}[ xlabel={$x$}, ylabel={$\kappa^{(1)}_{I}(x)$}, ymode = log, legend pos= north east ]
\addplot coordinates { (0.1, 1.312) (0.156, 1.841) (0.2, 1.551) (0.3, 0.797) (0.4, 0.431) (0.5, 0.254) (0.6, 0.162) (0.7, 0.110) (0.8, 0.0787) (0.9, 0.0589) (1, 0.0459)};
\addplot coordinates { (0.0316, 4.438) (0.0473, 22.088) (0.1, 8.621) (0.2, 1.722) (0.3, 0.605) (0.4, 0.277) (0.5, 0.148) (0.6, 0.0884) (0.7, 0.057) (0.8, 0.0391) (0.9, 0.0284) (1, 0.0216)};
\addplot coordinates { (0.01, 13.923) (0.0146, 232.678) (0.1, 6.021) (0.2, 0.946) (0.3, 0.301) (0.4, 0.130) (0.5, 0.0674) (0.6, 0.0391) (0.7, 0.0246) (0.8, 0.0166) (0.9, 0.0119) (1, 8.955e-3)};
\addplot coordinates { (0.00321, 45.084) (0.00477, 2409.301) (0.1, 3.037) (0.2, 0.429) (0.3, 0.130) (0.4, 0.0551) (0.5, 0.0279) (0.6, 0.0159) (0.7, 9.942e-3) (0.8, 6.643e-3) (0.9, 4.705e-3) (1, 3.508e-3)};
\legend {$\xi = 10^{-2}$, $\xi = 10^{-3}$, $\xi = 10^{-4}$, $\xi = 10^{-5}$}
\end{axis}
\end{tikzpicture}
\end{subfigure}
\caption{Example 2. Graphs of functions $\kappa^{(k)}_{I}(x),$ $k=0,1$ that correspond to different values of parameter $\xi;$ $h=10^{-4}.$} \label{Example_2_graph_kappa_I}
\end{figure}
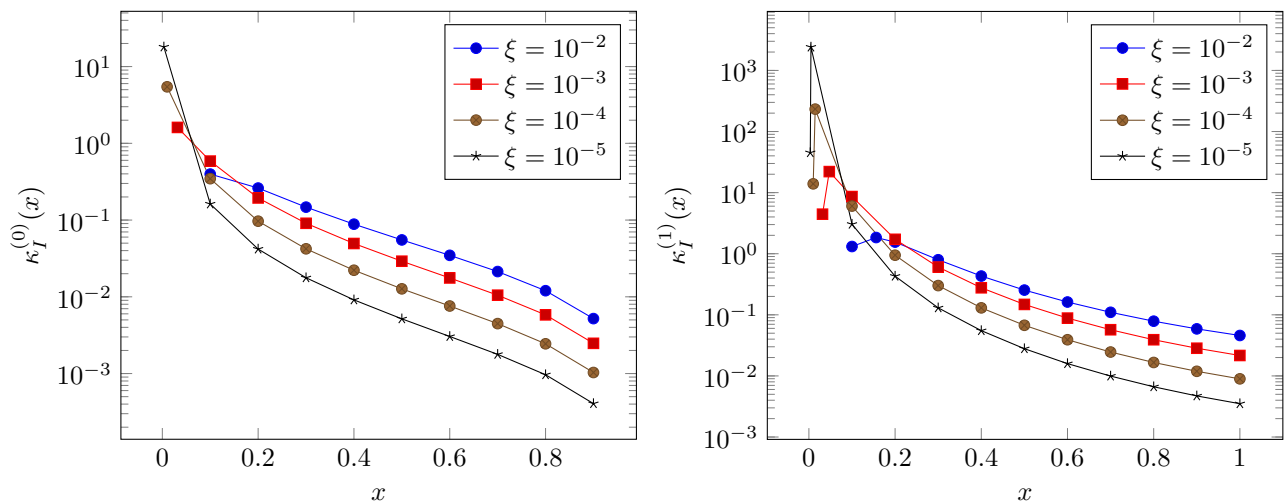

The results of numerical experiments are presented in Tab. \ref{table_bvpT21}, featuring pretty much the same set of parameters as in the previous numerical example, although some of the parameters have a bit different meanings, as clarified below. As one can easily conclude, $N_{S}$ and $N_{I}$ denote number of discretization knots that belong to the "straight" $[c, 1]$ and "inverse" $[\tilde{u}(c), 1]$ intervals respectively. For the case of  \verb"bvpT21" we redefine $\kappa^{(k)}_{S}$ and $\kappa^{(k)}_{S}(x)$ \eqref{Example_1_def_of_kappa_func} as
$$\kappa^{(k)}_{S} = \|\kappa^{(k)}_{S}(x)\|_{[c, 1]} = h^{-2} \|u^{(k)}(x) - \tilde{u}^{(k)}(x)\|_{[c, 1]},\; k=0,1,$$
whereas the definition of $\kappa^{(k)}_{I}$ and $\kappa^{(k)}_{I}(x)$ technically remains the same (see \eqref{Example_1_def_of_kappa_func}).

Graphs on Fig. \ref{Example_2_graph_kappa_S}, \ref{Example_2_graph_kappa_I} give general understanding of how the approximation characteristics of the SI-method vary throughout the intervals $[c, 1]$ and $[\tilde{u}(c), 1]$ respectively. The overall picture conforms to what we have seen with the Troesch's problem: the accuracy of the SI-method's approximation increases as we move away from the "critical" point.

\subsection{Example 3.} By means of the third example we want to push the applicability boundaries of the SI-method even further, applying it to the following problem:

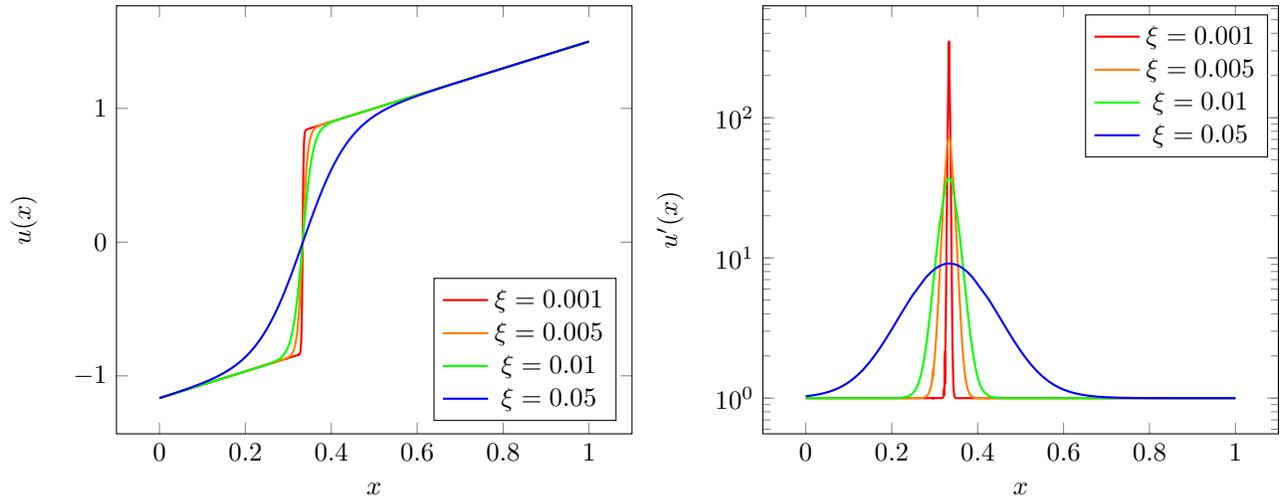
\begin{figure}[h!]
\centering
\begin{subfigure}[b]{0.47\textwidth}
\begin{tikzpicture}
        \begin{axis}[xlabel={$x$}, ylabel={$u(x)$}, legend pos=south east]
            \addplot[mark = none, red, thick] table {bvp_t_30_lambda_0.001_function.txt};
            \addplot[mark = none, orange, thick] table {bvp_t_30_lambda_0.005_function.txt};
            \addplot[mark = none, green, thick] table {bvp_t_30_lambda_0.01_function.txt};
            \addplot[mark = none, blue, thick] table {bvp_t_30_lambda_0.05_function.txt};
            \legend {$\xi = 0.001$, $\xi = 0.005$, $\xi = 0.01$, $\xi = 0.05$}
        \end{axis}
    \end{tikzpicture}
\end{subfigure}
\begin{subfigure}[b]{0.47\textwidth}
\begin{tikzpicture}
        \begin{axis}[ymode = log, xlabel={$x$}, ylabel={$u^{\prime}(x)$}]
            \addplot[mark = none, red, thick] table {bvp_t_30_lambda_0.001_derivative.txt};
            \addplot[mark = none, orange, thick] table {bvp_t_30_lambda_0.005_derivative.txt};
            \addplot[mark = none, green, thick] table {bvp_t_30_lambda_0.01_derivative.txt};
            \addplot[mark = none, blue, thick] table {bvp_t_30_lambda_0.05_derivative.txt};
            \legend {$\xi = 0.001$, $\xi = 0.005$, $\xi = 0.01$, $\xi = 0.05$}
        \end{axis}
    \end{tikzpicture}
\end{subfigure}
\caption{Example 3. Graphs of solutions $u(x)$ of problem \eqref{bvp_t_30} (left) and their derivatives $u^{\prime}(x)$ (right) for different values of $\xi.$} \label{Example_3_graph_solution_derivative}
\end{figure}

\begin{equation}\label{bvp_t_30}
  \xi u^{\prime\prime}(x) = (1-u^{\prime}(x))u(x),\; x\in [0,1],
\end{equation}
$$u(0) = -7/6,\; u(1) = 3/2,$$

which is known in literature as \verb"bvpT30" \cite{Cash_Algo_927}. As one can conclude from Fig. \ref{Example_3_graph_solution_derivative}, the stiffness of the problem is determined by the interval of rapid variation of its solution, which lies somewhere inside $(0, 1)$ and shrinks as parameter $\xi > 0$ tends to $0,$ while the magnitude of the variation remains almost constant. The general idea of the SI-method suggests that in such a case we should introduce a pair of "critical" points $c_{1}, c_{2}\in (0,1),$ $c_{1} < c_{2}$ so that the interval of rapid variation of solution $u(x)$ is enclosed inside $[c_{1}, c_{2}]$ and the "hybrid" problem should be restated as
$$\mathfrak{u}^{\prime\prime}(x) = \mathcal{N}(\mathfrak{u}^{\prime}(x), \mathfrak{u}(x), x), \; x\in [0, c_{1}] \cup [c_{2}, 1],\; \mathfrak{u}(0) = -7/6,\; \mathfrak{u}(1) = 3/2,\; \mathfrak{u}(c_{1}) \neq \mathfrak{u}(c_{2}),$$
$$\mathfrak{x}^{\prime\prime}(x) = -\mathcal{N}(1/\mathfrak{x}^{\prime}(u), u, \mathfrak{x}(u))\left(\mathfrak{x}^{\prime}(u)\right)^{3}, \; u\in [\mathfrak{u}(c_{1}), \mathfrak{u}(c_{2})],$$
$$\mathfrak{x}(\mathfrak{u}(c_{i})) = c_{i},\; \mathfrak{x}^{\prime}(\mathfrak{u}(c_{i})) = \frac{1}{\mathfrak{u}^{\prime}(c_{i})},\; i=1,2, \; \mathcal{N}(u^{\prime}, u, x) = \xi^{-1}(1-u^{\prime})u.$$
With this in mind, and some obvious modifications of the implementation guide described in Section \ref{section_implementation_aspects}, the SI-method can be successfully applied to problem \eqref{bvp_t_30} and the corresponding results are presented in Tab. \ref{table_bvpT30} and \ref{table_bvpT30_multiprecision}.

Similarly to the two preceding examples, we are interested in estimating quantities $\kappa_{S}^{(k)},$ $\kappa_{I}^{(k)},$ $k=0,1,$ defined as
$$ \kappa_{S}^{(k)} = h^{-2} \|u^{(k)}(x) - \tilde{u}^{(k)}(x)\|_{[0, c_{1}] \cup [c_{2}, 1]},$$
$$ \kappa_{I}^{(k)} = h^{-2} \|x^{(k)}(u) - \tilde{x}^{(k)}(u)\|_{[\tilde{u}(c_{1}), \tilde{u}(c_{2})]}, \; k=0,1,$$
for different values of $\xi.$ We define (compare to \eqref{criteria_to_calculate_c})
$$c_{1} = \min\limits_{i}\{x_{i}\; : \; |\tilde{u}^{\prime}(x_{i})| \geq u^{\prime}_{crit.}\},$$
$$c_{2} = \max\limits_{i}\{x_{i}\; : \; |\tilde{u}^{\prime}(x_{i})| \geq u^{\prime}_{crit.}\}$$
and choose $u^{\prime}_{crit.} = 2$ (having $u^{\prime}_{crit.} = 1$, as in the previous examples, does not make any sense since, as it can be seen from Fig. \ref{Example_3_graph_solution_derivative}, $u^{\prime}(x) \geq 1,$ $\forall x \in [0,1]$).

To get a reference approximation of the exact solution $u(x)$ (together with its first derivative), we used {\bf bvpSolve} package which is available through {\bf R} environment, see \cite{Cash_bvpSolve_in_R_2014}, \cite{Cash_bvpSolve_test_problems_2010}. For that purpose, \verb"bvptwp" subroutine was applied to the problem with the following set of parameters (for all the trial values of parameter $\xi$) :
$$\mathtt{x} = \mathtt{seq}(0, 1, \mathtt{by} = 0.0001),\; \mathtt{atol} = 1e-10,\; \mathtt{order} = 2,\; \mathtt{nmax} = 1000000.$$ Execution times of the subroutine (for different values of parameter $\xi$) were measured and are presented in the columns $T^{\ast}$ of Tab. \ref{table_bvpT30} and \ref{table_bvpT30_multiprecision}, right next to the timings $T,$ representing execution times of our implementation of the SI-method being applied to problem \eqref{bvp_t_30} (for the same values of parameter $\xi$). Since \verb"bvptwp" subroutine's output is a collection of values of the unknown solution and its derivative at some number $N^{\ast}$ of points from $[0,1]$ (a mesh), it barely can be directly used to calculate quantities $\kappa_{S}^{(k)}$. The latter issue, however, can be solved by using, for example, a cubic splines interpolation. Given the set of parameters, mentioned above, the distance between two successive points of the mesh produced by \verb"bvptwp" should not be greater than $h = 0.0001.$ An interpolation by cubic splines ensures the approximation error be of order $h^{3},$ --- right enough to investigate discrepancies of order $h^{2}$ (which is the approximation order of the SI-method). The interpolation was implemented by means of subroutine \verb"ArrayInterpolation" from {\bf CurveFitting} package in {\bf Maple 2016} environment.

To get a reference approximation of function $x(u),$  which is inverse to the exact solution $u(x),$ we used \verb"dsolve" subroutine available in {\bf Maple 2016} environment. The subroutine was called for the boundary value problem
$$x^{\prime\prime}(u) = -\left(10 + (1/\xi - 10)l\right)\left(\left(x^{\prime}(u)\right)^{3} - \left(x^{\prime}(u)\right)^{2}\right)u,\; x(-7/6) = 0,\; x(3/2) = 1, $$
with the following set of parameters: $$\mathtt{numeric},\; \mathtt{output} = \mathtt{listprocedure},$$
$$ \mathtt{maxmesh} = 60000,\; \mathtt{continuation} = l, \mathtt{range} = -7/6 .. 3/2, \mathtt{abserr} = 10^{-16}.$$

\begin{table}
\begin{tabular}{|c|c|c|c|c|c|c|c|c|c|c|c|}
  \hline
  $\xi$ & $c_{1}$ & $c_{2}$ & $N^{\ast}$ & $N_{S}$ & $N_{I}$& $\kappa^{(0)}_{S}$ & $\kappa^{(1)}_{S}$ & $\kappa^{(0)}_{I}$ & $\kappa_{I}^{(1)}$ & $T^{\ast},$ sec & $T,$ sec\\
  \hline
   5e-2 & 0.16 & 0.509 & 1001 & 10380 & 21154 & 1.1 & 7.2 & 0.53 & 3.0 & 5.89 & 0.29 \\
   1e-2 & 0.276 & 0.390 & 1001 & 13433 & 19346 & 5.3 & 155.7 & 2.6 & 50.1 & 8.34 & 0.40 \\
   5e-3 & 0.3 & 0.366 & 1001 & 14181 & 18936 & 38.0 & 1692.5 & 46.4 & 405.8 & 11.2 & 2.0 \\
  \hline
\end{tabular}
\caption{Example 3. Results of numerical experiments for different values of $\xi$ and $h=10^{-4}.$ Double precision calculations. }\label{table_bvpT30}
\end{table}

\begin{table}
\begin{tabular}{|c|c|c|c|c|c|c|c|c|c|c|c|c|}
  \hline
  $\xi$ & $c_{1}$ & $c_{2}$ & $N^{\ast}$ & $N_{S}$ & $N_{I}$& $\kappa^{(0)}_{S}$ & $\kappa^{(1)}_{S}$ & $\kappa^{(0)}_{I}$ & $\kappa_{I}^{(1)}$ & $T^{\ast},$ sec & $T,$ sec & Digits\\
  \hline
   5e-3 & 0.3 & 0.366 & 10001 & 14110 & 18937 & 10.6 & 609.2 & 5.3 & 163.2 & 11.2 & 24.5 & 16\\
   4e-3 & 0.306 & 0.361 & 10028 & 14265 & 18841 & 13.2 & 954.2 & 6.6 & 250.6 & 12.6 & 28.1 & 16\\
   3e-3 & 0.312 & 0.355 & 10028 & 14418 & 18740 & 17.6 & 1674.4 & 8.8 & 439.3 & 14.8 & 49.8 & 22\\
   2e-3 & 0.318 & 0.349 & 20055 & 14594 & 18725 & 26.9 & 3836.7 & 13.4 & 996.8 & 27.6 & 114.3 & 35\\
   1e-3 & 0.325 & 0.342 & 641675 & 14771 & 18498 & 52.5 & 14846 & 25.9 & 3801.4 & 1794.5 & 738.8 & 60\\
  \hline
\end{tabular}
\caption{Example 3. Results of numerical experiments for different values of $\xi$ and $h=10^{-4}.$ Extended precision calculations. }\label{table_bvpT30_multiprecision}
\end{table}

The meaning of parameters $N_{S}$ and $N_{I}$ presented in Tab. \ref{table_bvpT30} and \ref{table_bvpT30_multiprecision} remains pretty much the same as in the previous numerical examples, namely, they denote the numbers of discretization knots on the "straight" $[0, c_{1}] \cup [c_{2}, 1]$ and "inverse" $[\tilde{u}(c_{1}), \tilde{u}(c_{2})]$ intervals respectively. $N^{\ast}$ denotes number of elements in the collection returned by \verb"bvptwp" subroutine (as a single element of the collection we consider a triplet consisting of an argument $x$ and the corresponding approximations of the unknown solution and its derivative at that argument).

As one can tell from the captions, the difference between the two tables mentioned above is in the implementation of the SI-method used to obtain the corresponding numerical results. Tab. \ref{table_bvpT30} deals with a "double precision" implementation (i.e. all the calculations are done using the built-in type \verb"double" of C++ programming language). The implementation referred to in Tab. \ref{table_bvpT30_multiprecision} is based on type \verb"number<cpp_dec_float<D>>" from \verb"boost::multiprecision" name space ({\bf Boost C++ Libraries} ver. 1.59.0), where the value of the integer template parameter \verb"D" was chosen according to the column "Digits" from the table. It is worth mentioning that the actual machine epsilon for the latter numerical type is typically much smaller than  $10^{-D}$ (although the latter is exactly the value returned by function \verb"std::numeric_limits::epsilon" for the type). For instance, the value of machine epsilon that corresponds to $D = 16$ is of order $10^{-33}$ (one can easily check this using the definition of the machine epsilon as the biggest positive value $\varepsilon$ such that $1+\varepsilon = 1$). All this means that to populate Tab. \ref{table_bvpT30_multiprecision} with the data we had to perform calculations in precision that is considerably higher than the "double" one. The reason for that is explained below.

The specifics of problem \eqref{bvp_t_30} is that the difference $1 - u^{\prime}(0)$ tends to $0$ (while always remaining positive) as $\xi > 0$ tends to $0.$ Our numerical experiments suggest that already for $\xi = 0.005$  the mentioned difference is less than the machine epsilon for the "double precision" arithmetics (that is, $1.11\times10^{-16}$). The latter makes it practically impossible (for low values of parameter $\xi$) to use the implementation approach described in Section \ref{section_implementation_aspects}, especially in the part about obtaining the initial guess via a single shooting procedure. A shooting by adjusting tangent of the unknown solution at point $x=0$ will always give us an approximation of function $u^{\dagger}(x) = x-7/6,$ which, apparently, satisfies the equation from \eqref{bvp_t_30} as well as the corresponding boundary condition at $x=0,$ while remaining quite far from the desired value $3/2$ at $x=1.$ We managed, however, to overcome this issue by using the output of the SI-method, applied to the problem with $\xi = 0.01,$ as an initial guess when solving the corresponding system of nonlinear equations (see Section \ref{section_implementation_aspects} and \cite{Makarov_Dragunov_2019} for more details about the implementation) for $\xi = 0.005$ (a, sort of, "chasing" approach). It is worth mentioning that, even with this "trick", we noticed that the corrections of the Newton's method applied to the mentioned nonlinear system stopped their convergence to $0$ at values of order $10^{-6},$ which might indicate about some numerical instabilities of our implementation being revealed by the problem in question. The instabilities have gone when the implementation was switched to do calculations via a numerical type of higher precision and the corresponding results are presented in Tab. \ref{table_bvpT30_multiprecision}. We leave the root cause analysis of the revealed instabilities for the further studies.

The data in the tables suggests that constants $\kappa_{S}^{(k)},$ $\kappa_{I}^{(k)},$ $k=0,1$ increase as $\xi$ decreases, which is in a good agreement with the results of the previous two numerical examples. As it can be seen from the execution time measurements (columns $T$ and $T^{\ast}$), when operating in double precision, our implementation of the SI-method performs better than that of \verb"bvptwp" subroutine. Switching to numerical types of higher precision essentially degrades performance of our implementation (which is expected) making it actually slower than \verb"bvptwp" for $\xi \in \left\{5\cdot10^{-3}, 4\cdot10^{-3}, 3\cdot10^{-3}, 2\cdot10^{-3}\right\}.$ The situation, however, changes dramatically for $\xi = 10^{-3},$ when the number of knots in the discretization mesh of \verb"bvptwp" suddenly increases in about 32 times: from $20055$ ($\xi = 2\cdot 10^{-3}$) to $641675,$ making \verb"bvptwp" subroutine about two and half times slower than the SI-method's implementation. At the same time, the number of knots in the mesh produced by the SI-method does not change much as $\xi$ decreases from $5\cdot10^{-3}$ to $10^{-3}.$ The latter, in our opinion, clearly indicates about the promising potential of the mesh generation strategy which naturally follows from the SI "ideology".

\section{Conclusions}\label{Section_conclusions}
In the present paper we have laid down a theoretical foundation for a new and very promising numerical method for solving stiff boundary value problems also known as the SI-method. We have established several fundamental facts revealing the method's properties as well as developed the corresponding proof methodologies which, in our opinion, can be easily enhanced to handle a much broader class of BVPs than that described as problem \eqref{Intro_Equation}, \eqref{Intro_boundary_conditions}. The results of numerical examples, presented in the paper, obviously support our optimism and uncover new theoretical and practical challenges to be addressed in the further studies.

\clearpage

\bibliography{../bibliography/references_stat}{}
\bibliographystyle{plain}
\end{document}